\journal{Journal of Computational and Applied Mathematics}
\newcommand{\weak}{\rightharpoonup}
\newcommand{\weakstar}{\stackrel{\ast}{\rightharpoonup}}
\newcommand{\vect}[1]{\boldsymbol{#1}}
\newcommand{\dx}{\mathrm{d}x}
\mathchardef\ordinarycolon\mathcode`\:
\definecolor{ao(english)}{rgb}{0.0, 0.5, 0.0}
\definecolor{deepgreen}{rgb}{0.1, 0.5, 0.1}
\newtheorem{prop}{Proposition}
\newtheorem{theorem}{Theorem}
\newtheorem{definition}{Definition}
\newtheorem{corollary}{Corollary}
\newtheorem{remark}{Remark}
\newtheorem{lemma}[theorem]{Lemma}
\numberwithin{equation}{section}
\begin{document}

\begin{frontmatter}

\title{Numerical analysis of a topology optimization problem for Stokes flow}

\author[1]{I.~P.~A.~Papadopoulos\corref{cor1}\fnref{fn1}}
\cortext[cor1]{Corresponding author}
\ead{ioannis.papadopoulos@maths.ox.ac.uk}
\fntext[fn1]{I.~P.~is supported by the EPSRC Centre for Doctoral Training in Partial Differential Equations: Analysis and Applications [grant  number EP/L015811/1], the EPSRC grant Spectral element methods for fractional differential equations, with applications in applied analysis and medical imaging [grant number EP/T022132/1] and The MathWorks,  Inc.}

\author[1]{E.~S\"uli}
\ead{endre.suli@maths.ox.ac.uk}

\address[1]{Mathematical Institute, University of Oxford, Oxford, OX2 6GG, UK}

\begin{abstract}
T.~Borrvall and J.~Petersson [Topology optimization of fluids in Stokes flow, International Journal for Numerical Methods in Fluids 41 (1) (2003) 77--107] developed the first model for the topology optimization of fluids in Stokes flow. They proved the existence of minimizers in the infinite-dimensional setting and showed that a suitably chosen finite element method will converge in a weak(-*) sense to an unspecified solution. In this work, we prove novel regularity results and extend their numerical analysis. In particular, given an isolated local minimizer to the  infinite-dimensional   problem, we show that there exists a sequence of finite element solutions, satisfying necessary first-order optimality conditions, that \emph{strongly} converges to it. We also provide the first numerical investigation into convergence rates. 
\end{abstract}

\begin{keyword}
topology optimization \sep Stokes flow \sep regularity \sep finite element method \sep nonconvex variational problem \sep multiple solutions
\end{keyword}

\end{frontmatter}

%\linenumbers

\section{Introduction}
Topology optimization has become an effective technique in structural and additive manufacturing and has found multiple uses in medicine, architecture, and engineering \cite{Adam2019, Jang2008, Liu2018}. The objective is to find the optimal distribution of a fluid or solid within a given domain that minimizes a problem-specific cost functional \cite{Allaire2012, Bendsoe2004}. In this paper we consider a model for topology optimization for fluids proposed by Borrvall and Petersson \cite{Borrvall2003}. Their seminal work has become the foundation for a number of developments in recent years \cite{Alexandersen2020, Alonso2018, Alonso2020, Olesen2005, Gersborg2005, Evgrafov2014, Evgrafov2006, Deng2018, Guest2006, Aage2008, Kreissl2011, Sa2018}. Their goal was to minimize the power dissipation of a fluid that satisfies both the Stokes equations and a volume constraint restricting the proportion of the domain that the fluid can occupy. In their paper, they derived \emph{generalized Stokes equations}, which incorporate the classical velocity and pressure terms but also introduce a variable, $\rho$, that represents the material distribution of the fluid over the given domain. The presence of fluid is indicated by a value of one in the material distribution whereas absence of fluid is represented by a value of zero. It would be ideal for $\rho : \Omega \to \{0,1\}$, in order to remove any ambiguity in the solutions, however, in general, this is a numerically intractable objective. In the Borrvall--Petersson model $\rho :\Omega \to [0,1]$, but the model is regularized with an inverse permeability term, $\alpha$, which favors solutions where $\rho$ is close to zero or one. From the generalized Stokes equations, Borrvall and Petersson formulated an infinite-dimensional nonconvex optimization problem with inequality, PDE and box constraints.  There have been studies on the numerics of a Ginzburg--Landau regularization of the objective functional that can be shown to recover solutions with sharp transitions in the material distribution \cite{Garcke2015, Garcke2016}. Notably, Garcke et al.~\cite{Garcke2015b} derived a posteriori error estimators designed to resolve the interfaces in the material distribution for the Navier--Stokes extension to the Borrvall--Petersson problem. As far as we are aware, there exist only a couple of results dealing with weak(-*) convergence of discretized solutions, as the mesh size tends to zero, to solutions of the Borrvall--Petersson problem on the whole domain \cite{Borrvall2003, Thore2021}. Moreover, there have been no results concerning strong convergence nor the convergence to \emph{all} the isolated minimizers of the problem. 

In the original paper \cite{Borrvall2003}, it is shown that a minimizing velocity and material distribution to the optimization problem exist \cite[Th.~3.1]{Borrvall2003}; however, the minimizer is not necessarily unique \cite[Sec.~4.5]{Borrvall2003}. It is also shown that there exist finite element solutions that converge to a minimizer of the problem \cite[Th.~3.2]{Borrvall2003}. The proven convergence is weak in the approximation of the velocity and weak-* in the material distribution, with no results for the pressure. In addition, Borrvall and Petersson show that the approximation of material distribution strongly converges to a solution in $L^s(\Omega_b)$, $s \in [1,\infty)$, where $\Omega_b$ is any measurable  subset of $\Omega$ in which the material distribution  that solves the infinite-dimensional problem  is equal to zero or one a.e.~\cite[Sec.~3.3]{Borrvall2003}. Weak-* convergence permits large oscillations in the material distribution, called checkerboarding, which could occur in areas where the material distribution is not zero or one under the current results. However, in practice, checkerboarding is not observed in these regions. Since there can be multiple solutions, the nature of the convergence is ambiguous. In particular, it is not clear if there are sequences of finite element solutions converging to every solution  of the infinite-dimensional problem. 

Our goal is to extend and refine the analysis of Borrvall and Petersson. We show that, given an isolated minimizer to the  infinite-dimensional   problem, there exists a sequence of finite element solutions, satisfying the necessary first-order optimality conditions, that strongly converges. In particular, we strengthen the convergence from weak convergence in $H^1(\Omega)^d$ to strong convergence in $H^1(\Omega)^d$ for the velocity, and from weak-* convergence in $L^\infty(\Omega)$ to strong convergence in $L^s(\Omega)$, $s \in [1,\infty)$ for the material distribution. Moreover, in the case of a homogeneous Dirichlet boundary condition, we show that the material distribution is weakly differentiable inside any compact subset of the support of the velocity; more specifically $\rho \in H^1(U_\theta)$, for any $\theta > 0$, where $U_\theta$ is any measurable subset of $\Omega$ in which $|\vect{u}|^2 \geq \theta>0$ a.e.~in $U_\theta$. This analysis confirms that checkerboarding cannot occur under mild assumptions on the model. Hence, isolated minimizers of the problem can be well approximated using the finite element method. We conclude with a numerical investigation into the convergence of the finite element solutions. 

By first considering the optimization problem, we derive necessary first-order optimality conditions in Section \ref{sec:foc}. By construction, the generalized Stokes equations are satisfied but we also show that the material distribution satisfies a variational inequality. In Section \ref{sec:regularity}, we show that under moderate assumptions, the material distribution is weakly differentiable in the case of a homogeneous Dirichlet boundary condition for the velocity. We tackle the issue of multiple local minima in Section \ref{sec:femapprox}, by considering closed balls around isolated local minimizers. In that section, we also prove that for each isolated minimizer there exists a sequence of finite element solutions to the discretized first-order optimality conditions, which strongly converges to the solution  of the infinite-dimensional problem.  In Section \ref{sec:numerical}, we computationally investigate the convergence of sequences of finite element approximations to the respective solutions  of the infinite-dimensional problem.

\section[Existence \& FOC]{Existence and necessary first-order optimality conditions}
\label{sec:foc}
The topology optimization problem of Borrvall and Petersson \cite{Borrvall2003} is as follows: find the velocity, $\vect{u}$, and the material distribution, $\rho$, that solve the minimization problem
\begin{align}
\min_{(\vect{v},\eta) \in H^1_{\vect{g},\mathrm{div}}(\Omega)^d \times C_\gamma} J(\vect{v},\eta),\label{borrvallmin} \tag{BP}
\end{align}
where,
\begin{align*}
J(\vect{v},\eta) &:=\frac{1}{2} \int_\Omega \left(\alpha(\eta) |\vect{v}|^2 + \nu |\nabla \vect{v}|^2 - 2\vect{f}\cdot \vect{v}\right) \dx,\\
H^1_{\vect{g}}(\Omega)^d &:= \{\vect{v} \in H^1(\Omega)^d : \vect{v}|_{\partial \Omega}= \vect{g} \;\; \text{on} \;\; \partial \Omega\},\\
H^1_{\vect{g},\mathrm{div}}(\Omega)^d &:= \{\vect{v} \in H^1_g(\Omega)^d : \mathrm{div}(\vect{v}) = 0 \;\; \text{a.e.~in} \; \Omega \},\\
C_\gamma &:= \left\{ \eta \in L^\infty(\Omega) : 0 \leq \eta\leq 1 \;\; \text{a.e.}, \;\; \int_\Omega \eta \; \dx \leq \gamma |\Omega| \right\},
\end{align*}
where $\Omega \subset \mathbb{R}^d$ is a Lipschitz domain with dimension  $d \in \{2,3\}$,  $\vect{f} \in L^2(\Omega)^d$, $\nu$ is the (constant) viscosity, and $\gamma \in (0,1)$ is the volume fraction. Here, $\alpha$ is the inverse permeability, modeling the influence of the material distribution on the flow. For values of $\rho$ close to one, $\alpha(\rho)$ is small permitting fluid flow; for small values of $\rho$, $\alpha(\rho)$ is very large, restricting fluid flow. The function $\alpha$ is assumed to have the following properties:
\begin{enumerate}[label=({A}\arabic*)]
\setlength\itemsep{0em}
\item $\alpha: [0,1] \to [\underline{\alpha}, \overline{\alpha}]$ with $0 \leq \underline{\alpha}$ and $ \overline{\alpha} < \infty$; \label{alpha1}
\item $\alpha$ is convex and monotonically decreasing; \label{alpha2}
\item $\alpha(0) = \overline{\alpha}$ and $\alpha(1) = \underline{\alpha}$, \label{alpha3}
 \end{enumerate}
generating an operator also denoted $\alpha: C_\gamma \to L^\infty(\Omega; [\underline{\alpha},\overline{\alpha}])$. Typically, in the literature $\alpha$ takes the form \cite{Borrvall2003}
\begin{align}
\alpha(\rho) = \bar{\alpha}\left( 1 - \frac{\rho(q+1)}{\rho+q}\right), \label{eq:alphachoice}
\end{align}
where $q>0$ is a penalty parameter, so that $\lim_{q \to \infty} \alpha(\rho) = \bar{\alpha}(1-\rho)$. Furthermore,  $|_{\partial \Omega}$ is to be understood in the boundary trace sense \cite[Ch.~5.5]{Evans2010}, $\vect{g} \in H^{1/2}(\partial \Omega)^d$, $\vect{g} = \vect{0} $ on $\Gamma \subset \partial \Omega$, with $\mathcal{H}^{d-1}(\Gamma)>0$, i.e.\ $\Gamma$ has nonzero $(d-1)$-dimensional Hausdorff measure. Hence, the Poincar\'e inequality holds with constant $c_p$ such that $\|\vect{v}\|_{L^2(\Omega)}\leq c_p \|\nabla \vect{v} \|_{L^2(\Omega)}$ for all $\vect{v} \in H^1(\Omega)^d$ with $\vect{v}|_{\Gamma} = \vect{0}$.
\begin{remark}
The integral in (\ref{borrvallmin}) is well defined. {Indeed,} since $\alpha$ is assumed to be convex it is {Borel measurable; also since} $\rho \in C_\gamma$ is Lebesgue measurable, the composition {$\alpha(\rho) : \Omega \to [\underline{\alpha}, \overline{\alpha}]$} is Lebesgue measurable.
\end{remark}

The next theorem is due to Borrvall and Petersson \cite[Th.~3.1]{Borrvall2003}.
\begin{theorem}
\label{th:borrvallexistence}
Suppose that $\Omega \subset \mathbb{R}^d$ is a Lipschitz domain,  $d \in \{2,3\}$,  and $\alpha$ satisfies properties \ref{alpha1}--\ref{alpha3}. Suppose in addition that $\alpha \in C^1([0,1];[\underline{\alpha},\bar{\alpha}])$. Then, there exists a pair $(\vect{u}, \rho) \in H^1_{\vect{g},\mathrm{div}}(\Omega)^d \times C_\gamma$ that locally minimizes $J$, as defined in (\ref{borrvallmin}).
\end{theorem}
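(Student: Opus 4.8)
The plan is to establish existence of a minimizer via the direct method of the calculus of variations, which is the standard approach for such problems and is almost certainly what Borrvall and Petersson carried out. The core steps are: (i) show that the objective $J$ is bounded below on the admissible set, (ii) extract a minimizing sequence, (iii) use coercivity to obtain weak/weak-$*$ compactness, and (iv) pass to the limit using lower semicontinuity, while checking that the limit remains admissible.

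First I would verify coercivity and a lower bound. Since $\alpha \geq \underline{\alpha} \geq 0$, the term $\tfrac{1}{2}\int_\Omega \alpha(\eta)|\vect{v}|^2 \, \dx$ is nonnegative, and the viscous term $\tfrac{\nu}{2}\int_\Omega |\nabla \vect{v}|^2 \, \dx$ controls the $H^1$ seminorm. Because $\vect{v}$ satisfies a nonhomogeneous Dirichlet condition only on part of the boundary with $\vect{g}=\vect{0}$ on $\Gamma$ where $\mathcal{H}^{d-1}(\Gamma)>0$, the Poincar\'e inequality (stated in the excerpt with constant $c_p$) applies after subtracting a fixed lifting of the boundary data, so $\|\nabla \vect{v}\|_{L^2}$ dominates the full $H^1$ norm on the affine space $H^1_{\vect{g}}(\Omega)^d$. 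The linear term $-\int_\Omega \vect{f}\cdot\vect{v}\,\dx$ is controlled by Cauchy--Schwarz and absorbed into the quadratic term via Young's inequality, since $\vect{f}\in L^2(\Omega)^d$. This yields both a finite infimum and a uniform $H^1$-bound on any minimizing sequence $(\vect{v}_k,\eta_k)$.

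Next I would extract a subsequence. Reflexivity of $H^1(\Omega)^d$ gives $\vect{v}_k \weak \vect{u}$ in $H^1(\Omega)^d$, and boundedness of $C_\gamma$ in $L^\infty(\Omega)$ gives $\eta_k \weakstar \rho$ in $L^\infty(\Omega)$ along a further subsequence. The key closedness checks are that $\vect{u}\in H^1_{\vect{g},\mathrm{div}}(\Omega)^d$ and $\rho\in C_\gamma$: the divergence-free and trace constraints are preserved under weak $H^1$-convergence because the relevant linear maps (divergence, boundary trace) are weakly continuous, and the pointwise bounds $0\le\rho\le1$ together with the volume inequality $\int_\Omega \rho \le \gamma|\Omega|$ survive weak-$*$ limits since they are defined by convex, weak-$*$ closed conditions. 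Finally, passing to the limit in $J$ requires weak lower semicontinuity of each term: the viscous Dirichlet term is convex and continuous in $\vect{v}$, hence weakly lower semicontinuous; the linear term converges by the compact embedding $H^1\hookrightarrow L^2$ (so $\vect{v}_k\to\vect{u}$ strongly in $L^2$); for the coupling term one uses the same strong $L^2$-convergence of $\vect{v}_k$ together with the convexity of $\alpha$ to handle the weak-$*$ limit of $\alpha(\eta_k)$.

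The hard part will be the coupling term $\tfrac{1}{2}\int_\Omega \alpha(\eta_k)|\vect{v}_k|^2\,\dx$, since it is bilinear in the two independently-converging sequences (weak-$*$ in $\eta_k$ through the nonlinearity $\alpha$, and merely weak in $\vect{v}_k$). The strategy I would use is to upgrade $\vect{v}_k\to\vect{u}$ to strong convergence in $L^2$ via Rellich--Kondrachov, so that $|\vect{v}_k|^2\to|\vect{u}|^2$ strongly in $L^1$, and then exploit convexity and monotonicity of $\alpha$ (properties \ref{alpha2}--\ref{alpha3}) to control $\alpha(\eta_k)$: one shows that $\liminf_k \int_\Omega \alpha(\eta_k)|\vect{v}_k|^2 \ge \int_\Omega \alpha(\rho)|\vect{u}|^2$ by combining the strong $L^1$-convergence of $|\vect{v}_k|^2$ with a lower semicontinuity argument for the map $\eta\mapsto\int_\Omega\alpha(\eta)|\vect{u}|^2\,\dx$, which is convex in $\eta$ for fixed nonnegative weight $|\vect{u}|^2$. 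Assembling these inequalities gives $J(\vect{u},\rho)\le\liminf_k J(\vect{v}_k,\eta_k)$, and since $(\vect{u},\rho)$ is admissible, it attains the infimum and is therefore a (global, hence local) minimizer.
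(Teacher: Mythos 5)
Your proposal is correct: the paper itself states this theorem without proof, citing it as \cite[Th.~3.1]{Borrvall2003}, and your direct-method argument (coercivity via Poincar\'e after lifting the boundary data, weak $H^1$ and weak-$*$ $L^\infty$ compactness, weak closedness of the divergence, trace, box and volume constraints, and lower semicontinuity of the coupling term by combining the Rellich--Kondrachov strong $L^2$ convergence of $\vect{v}_k$ with the convexity of $\alpha$) is essentially the argument given in that original reference. The only cosmetic remark is that nonemptiness of $H^1_{\vect{g},\mathrm{div}}(\Omega)^d$ (requiring the compatibility condition $\int_{\partial\Omega}\vect{g}\cdot\vect{n}\,\mathrm{d}s=0$) should be noted before extracting a minimizing sequence, and that your argument in fact yields a global minimizer, which is stronger than the local statement claimed.
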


\begin{remark}
Although a solution exists, it is not necessarily unique, as observed in the numerical examples in Section \ref{sec:numerical}, since the optimization problem is nonconvex. The nonconvexity is caused by the term $\alpha(\rho)|\vect{u}|^2$ in (\ref{borrvallmin}). A rough argument examines the second partial Fr\'echet derivative of $J$ with respect to $\vect{u}$ and $\rho$ (assuming that it exists), i.e., for suitable variations $\vect{v}, \eta$,
\begin{align}
\langle J''_{\vect{u},\rho}(\vect{u},\rho), \vect{v}, \eta \rangle = \int_\Omega \alpha'(\rho) \eta (\vect{u} \cdot \vect{v}) \, \mathrm{d}x.
\end{align}
Note that \ref{alpha2} implies that $\alpha'(\rho) < 0$ a.e.~and the box constraints enforce $\rho \geq 0$ a.e. Thus, given a pair $(\vect{u}, \rho)$, such that $\vect{u} \geq \vect{0}$ a.e., $\vect{u}$, $\rho$ are nonzero functions, and nonzero variations $(\vect{v}, \eta)$, such that $\vect{v} \geq \vect{0}$ a.e.~and $\eta > 0$ a.e., we see that $\langle J''_{\vect{u},\rho}(\vect{u},\rho), \vect{v}, \eta \rangle < 0$. Moreover, if $2 \langle J''_{\vect{u},\rho}(\vect{u},\rho), \vect{v}, \eta \rangle + \langle J''_{\vect{u},\vect{u}}(\vect{u},\rho), \vect{v}, \vect{v} \rangle +\langle J''_{\rho,\rho}(\vect{u},\rho), \eta, \eta \rangle < 0$, then the optimization problem is not convex.
\end{remark}

\begin{prop}
\label{prop:frechet}
Suppose $\alpha$ also satisfies
\begin{enumerate}[label=({A}\arabic*)]
\setcounter{enumi}{3}
\item $\alpha$ is twice continuously differentiable. \label{alpha4}
\end{enumerate}
Then $J : H^1(\Omega)^d \times L^s(\Omega) \to \mathbb{R}$ is partially Fr\'echet differentiable with respect to $\vect{u}$ and  partially Fr\'echet differentiable with respect to $\rho$, where $1 <s \leq \infty$ in two dimensions and $ 3/2 \leq s \leq \infty$ in three dimensions. Moreover, for all $\vect{v} \in H^1_0(\Omega)^d$ and $\eta \in C_\gamma$ we have that
\begin{align}
\langle J'_{\vect{u}}(\vect{u},\rho), \vect{v} \rangle &= \int_\Omega \alpha(\rho) \vect{u} \cdot \vect{v} + \nu \nabla \vect{u} : \nabla \vect{v} - \vect{f}  \cdot \vect{v} \; \dx,\\
\langle J'_{\rho}(\vect{u},\rho), \eta - \rho \rangle &= \frac{1}{2} \int_\Omega \alpha'(\rho) |\vect{u}|^2 (\eta - \rho) \; \dx,
\end{align}
where $J'_{\vect{u}}(\vect{u},\rho)$ denotes the Fr\'echet derivative of $J$ with respect to $\vect{u}$ and $J'_{\rho}(\vect{u},\rho)$ denotes the Fr\'echet derivative of $J$ with respect to $\rho$. 
\end{prop}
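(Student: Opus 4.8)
The plan is to treat the two partial derivatives separately, since the dependence of $J$ on $\vect{u}$ is quadratic with bounded coefficients, whereas the dependence on $\rho$ is a genuine nonlinearity entering through $\alpha$. In both cases the strategy is the same: expand the increment, isolate the linear part as the candidate derivative, check that this linear part is a bounded functional on the relevant space, and show the remainder is $o$ of the norm of the perturbation.

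For the derivative with respect to $\vect{u}$, I would fix $\rho$ and compute the increment $J(\vect{u}+\vect{v},\rho)-J(\vect{u},\rho)$ exactly. Expanding $|\vect{u}+\vect{v}|^2$ and $|\nabla(\vect{u}+\vect{v})|^2$ produces a term linear in $\vect{v}$, namely the asserted $\langle J'_{\vect u}(\vect u,\rho),\vect v\rangle$, plus the exactly quadratic remainder $\frac12\int_\Omega\alpha(\rho)|\vect v|^2\dx+\frac{\nu}{2}\int_\Omega|\nabla\vect v|^2\dx$. Boundedness of the linear part on $H^1(\Omega)^d$ follows from $\alpha(\rho)\le\bar\alpha$ a.e.\ (property \ref{alpha1}), the Cauchy--Schwarz inequality, and $\vect f\in L^2(\Omega)^d$; the remainder is $O(\|\vect v\|_{H^1}^2)=o(\|\vect v\|_{H^1})$. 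This part is routine and requires no Sobolev embedding.

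The derivative with respect to $\rho$ is the substantive part. Fixing $\vect u$, only $G(\rho):=\frac12\int_\Omega\alpha(\rho)|\vect u|^2\dx$ depends on $\rho$ (extending $\alpha$ to a $C^2$ function on $\mathbb{R}$ so that the superposition $\alpha(\rho)$ is defined for all $\rho\in L^s(\Omega)$). Using property \ref{alpha4}, I would apply Taylor's theorem pointwise, writing $\alpha(\rho+\psi)-\alpha(\rho)=\alpha'(\rho)\psi+R$ with $|R|\le|\psi|\,\omega(|\psi|)$, where $\omega$ is the modulus of continuity of $\alpha'$. The linear term gives the candidate derivative $\psi\mapsto\frac12\int_\Omega\alpha'(\rho)|\vect u|^2\psi\dx$; specialising $\psi=\eta-\rho$ recovers the stated formula. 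To see this is bounded on $L^s(\Omega)$, I would use Hölder's inequality with conjugate exponent $s'$ and the boundedness of $\alpha'$ to bound it by $\|\alpha'\|_\infty\|\psi\|_{L^s}\|\vect u\|_{L^{2s'}}^2$. This is exactly where the admissible ranges of $s$ enter: one needs $|\vect u|^2\in L^{s'}$, i.e.\ $\vect u\in L^{2s'}(\Omega)^d$, which by the Sobolev embedding of $H^1(\Omega)$ holds for every $s>1$ when $d=2$ (since $H^1\hookrightarrow L^p$ for all finite $p$) and for $s\ge 3/2$ when $d=3$ (the critical embedding $H^1\hookrightarrow L^6$ forces $2s'\le 6$).

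The main obstacle is showing the remainder $\frac12\int_\Omega R|\vect u|^2\dx$ is $o(\|\psi\|_{L^s})$ rather than merely $O(\|\psi\|_{L^s})$. For $s\ge2$ one could bound $|R|\le\frac12\|\alpha''\|_\infty\psi^2$ and conclude via Hölder that the remainder is $O(\|\psi\|_{L^s}^2)$, but this breaks down for the smaller admissible exponents (down to $s=3/2$ in three dimensions and $s>1$ in two), where $\psi^2$ is not controlled in the requisite space. I would therefore argue uniformly in $s$: normalising by $\|\psi\|_{L^s}$ and setting $\phi:=\psi/\|\psi\|_{L^s}$, Hölder's inequality and $|R|\le|\psi|\,\omega(|\psi|)$ give $\|\psi\|_{L^s}^{-1}\bigl|\int_\Omega R|\vect u|^2\dx\bigr|\le\frac12\|\omega(|\psi|)\,|\vect u|^2\|_{L^{s'}}$. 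Given any sequence $\psi_n\to0$ in $L^s$, a subsequence converges a.e.\ to $0$, so $\omega(|\psi_n|)\to0$ a.e.; since $\omega$ is bounded by $2\|\alpha'\|_{L^\infty}$ and $|\vect u|^{2s'}\in L^1$ by the same Sobolev embedding, dominated convergence yields $\|\omega(|\psi_n|)\,|\vect u|^2\|_{L^{s'}}\to0$ along the subsequence, hence along the whole sequence. This delivers Fréchet differentiability with the stated derivative. The delicate point remains the critical case $d=3$, $s=3/2$, where $|\vect u|^2\in L^{s'}=L^3$ sits exactly at the endpoint of $H^1\hookrightarrow L^6$ and the domination must be justified at the borderline exponent.
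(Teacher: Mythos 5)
Your proof is correct and takes essentially the same route the paper indicates: an exact quadratic expansion in $\vect{u}$, and for $\rho$ a mean-value/modulus-of-continuity bound on the Taylor remainder combined with H\"older's inequality, the Sobolev embedding theorem, and the dominated convergence theorem --- precisely the ingredients named in the paper's one-line proof via \cite[Prop.~2.3]{Papadopoulos2021e}. Two cosmetic remarks: choose the $C^2$ extension of $\alpha$ to be constant outside a neighborhood of $[0,1]$, so that $\alpha'$ is bounded and uniformly continuous (your use of $\omega$ and $\|\alpha'\|_{L^\infty}$ presupposes this), and your closing worry about the case $d=3$, $s=3/2$ is unfounded, since your dominated convergence argument needs only the continuous (not compact) embedding $H^1(\Omega)\hookrightarrow L^6(\Omega)$ and the dominating function $(2\|\alpha'\|_{L^\infty})^{s'}|\vect{u}|^{2s'}$ is integrable exactly at the endpoint $s'=3$.
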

\begin{proof}
Proposition \ref{prop:frechet} follows from the definition of Fr\'echet differentiability, the mean value  inequality, the dominated convergence theorem,  and the Sobolev embedding theorem \cite[Prop.~2.3]{Papadopoulos2021e}.
\end{proof}
\begin{remark}
It can be checked that if $\alpha$ is $(n+1)$-times continuously differentiable then $J$ is $n$-times Fr\'echet differentiable with respect to $\vect{u}$ and $\rho$.
\end{remark}

The following proposition is the main result of this section. We show that if $(\vect{u},\rho)$ is a minimizer  of the optimization problem (\ref{borrvallmin}), then the minimizer also satisfies first-order optimality conditions consisting of two equations and a variational inequality. 
\begin{prop}
\label{prop:pressureexistence}
Suppose that $\Omega \subset \mathbb{R}^d$ is a Lipschitz domain,  with $d\in\{2,3\}$, and $\alpha$ satisfies properties \ref{alpha1}--\ref{alpha4}. Consider a local or global minimizer $(\vect{u}, \rho) \in H^1_{\vect{g},\mathrm{div}}(\Omega)^d \times C_\gamma$ of (\ref{borrvallmin}).  Then, there exists a unique Lagrange multiplier $p \in L^2_0(\Omega)$ such that the following necessary first-order optimality conditions hold:
\begin{alignat}{2}
a_\rho(\vect{u},\vect{v}) + b(\vect{v}, p) &= l_{\vect{f}}(\vect{v}) \;\; && \text{for all} \;\; \vect{v} \in H^1_0(\Omega)^d, \tag{FOC1} \label{FOC1}\\
b(\vect{u},q) &=0 \;\; &&  \text{for all} \;\; q \in L^2_0(\Omega), \tag{FOC2} \label{FOC2}\\
c_{\vect{u}}(\rho,\eta-\rho) &\geq 0 \;\; && \text{for all} \;\; \eta \in C_\gamma,  \tag{FOC3} \label{FOC3}
\end{alignat}
where
\begin{align*}
L^2_0(\Omega) &:= \left\{ q \in L^2(\Omega) : \int_\Omega q \; \dx  = 0 \right \},
\end{align*}
and
\begin{alignat*}{2}
a_\rho(\vect{u},\vect{v})&:= \int_\Omega  \left[ \alpha(\rho) \vect{u} \cdot \vect{v} + \nu \nabla \vect{u} : \nabla \vect{v} \right]\dx, \quad
&&l_{\vect{f}}(\vect{v}):= \int_\Omega \vect{f}\cdot\vect{v} \; \dx,\\
b(\vect{v},q)&:= -\int_\Omega q \; \mathrm{div}(\vect{v}) \; \dx, 
&&c_{\vect{u}}(\rho,\eta) := \frac{1}{2} \int_\Omega \alpha'(\rho)\eta |\vect{u}|^2\; \dx.
\end{alignat*}
\end{prop}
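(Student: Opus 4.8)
The plan is to establish the three conditions separately, obtaining \eqref{FOC2} and \eqref{FOC3} by elementary variational arguments and reserving the inf-sup/de~Rham machinery for the pressure in \eqref{FOC1}. Condition \eqref{FOC2} is immediate: since the minimizer satisfies $\vect{u} \in H^1_{\vect{g},\mathrm{div}}(\Omega)^d$, we have $\mathrm{div}(\vect{u}) = 0$ a.e.~in $\Omega$, whence $b(\vect{u},q) = -\int_\Omega q\,\mathrm{div}(\vect{u})\,\dx = 0$ for every $q \in L^2_0(\Omega)$. For \eqref{FOC3} I would exploit the convexity of $C_\gamma$. Fixing $\vect{u}$ and taking any $\eta \in C_\gamma$, the segment $\rho + t(\eta - \rho)$ lies in $C_\gamma$ for all $t \in [0,1]$ and remains within any prescribed neighbourhood of $\rho$ for small $t$. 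Joint local minimality of $(\vect{u},\rho)$ therefore gives $J(\vect{u}, \rho + t(\eta - \rho)) \geq J(\vect{u},\rho)$ for small $t>0$; dividing by $t$ and passing to the limit $t\to 0^+$, the partial Fr\'echet differentiability from Proposition~\ref{prop:frechet} yields $\langle J'_\rho(\vect{u},\rho), \eta - \rho\rangle \geq 0$, which is precisely $c_{\vect{u}}(\rho, \eta - \rho) \geq 0$.

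The substantive work is \eqref{FOC1}, which I would obtain in two stages. In the first stage I consider perturbations $\vect{u} + t\vect{v}$ with $\vect{v}$ in the solenoidal tangent space $\{\vect{w}\in H^1_0(\Omega)^d : \mathrm{div}(\vect{w}) = 0\}$; such perturbations preserve both the boundary datum $\vect{g}$ and the incompressibility constraint, so $\vect{u}+t\vect{v} \in H^1_{\vect{g},\mathrm{div}}(\Omega)^d$, and since $t$ may take either sign, local minimality forces the stationarity condition $\langle J'_{\vect{u}}(\vect{u},\rho), \vect{v}\rangle = 0$, i.e.~$a_\rho(\vect{u},\vect{v}) = l_{\vect{f}}(\vect{v})$, for all solenoidal $\vect{v} \in H^1_0(\Omega)^d$. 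In the second stage I recover the pressure by applying the de~Rham theorem (equivalently, the surjectivity of $\mathrm{div} : H^1_0(\Omega)^d \to L^2_0(\Omega)$ guaranteed by the inf-sup condition on the Lipschitz domain $\Omega$): the bounded linear functional $\vect{v}\mapsto l_{\vect{f}}(\vect{v}) - a_\rho(\vect{u},\vect{v})$ annihilates the kernel of the divergence operator, so there exists $p \in L^2_0(\Omega)$ with $l_{\vect{f}}(\vect{v}) - a_\rho(\vect{u},\vect{v}) = b(\vect{v},p)$ for every $\vect{v} \in H^1_0(\Omega)^d$, which is exactly \eqref{FOC1}. Uniqueness of $p$ in $L^2_0(\Omega)$ follows from the same inf-sup bound, since $b(\vect{v},p)=0$ for all $\vect{v}$ forces $\|p\|_{L^2(\Omega)}=0$.

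The main obstacle lies in this second stage. One must first verify that $\vect{v}\mapsto l_{\vect{f}}(\vect{v}) - a_\rho(\vect{u},\vect{v})$ is a genuinely bounded functional on $H^1_0(\Omega)^d$, for which the continuity of $a_\rho$ (using $\alpha(\rho)\in L^\infty(\Omega;[\underline{\alpha},\overline{\alpha}])$ and $\vect{f}\in L^2(\Omega)^d$) is needed, and then invoke the form of de~Rham's theorem valid for merely Lipschitz, rather than smooth, domains, where the inf-sup constant depends on the geometry of $\Omega$; this is precisely where the Lipschitz regularity hypothesis on $\Omega$ is indispensable. A minor point requiring care throughout is the separation of the joint local minimality into the two coordinate-wise variational statements used above, which is legitimate because each admissible one-sided perturbation keeps the pair inside the feasible set $H^1_{\vect{g},\mathrm{div}}(\Omega)^d \times C_\gamma$.
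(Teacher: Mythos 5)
Your proposal is correct and takes essentially the same approach as the paper: stationarity of $J$ along two-sided solenoidal perturbations of $\vect{u}$, recovery of a unique pressure from the inf-sup condition for the divergence operator, the trivial verification of (\ref{FOC2}) from $\vect{u} \in H^1_{\vect{g},\mathrm{div}}(\Omega)^d$, and the one-sided convexity argument in $C_\gamma$ for (\ref{FOC3}). The only (cosmetic) difference is that where you invoke de~Rham's theorem as a black box, the paper carries out the underlying argument explicitly, using the inf-sup bound $\|B_0^* q\|_{X_0^*} \geq \beta \|q\|_M$ to show $\mathrm{Im}\, B_0^*$ is closed and then applying Banach's closed range theorem to identify $\mathrm{Im}\, B_0^* = (\ker B_0)^\circ$, generalizing the treatment in \cite{Ozanski2017}.
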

\begin{proof}
We will first show that (\ref{FOC1})--(\ref{FOC2}) are satisfied by generalizing arguments, used for the Stokes system with a homogeneous Dirichlet boundary condition, found in \cite{Ozanski2017}. For ease of notation we define $X_{\vect{g}}:=H^1_{\vect{g}}(\Omega)^d$, $X_0 :=H^1_0(\Omega)^d$, $V_{\vect{g}}:=H^1_{\vect{g}, \mathrm{div}}(\Omega)^d$, $V_0:=H^1_{0,\mathrm{div}}(\Omega)^d$ and $M:=L^2_0(\Omega)$. The respective dual spaces of $X_0$, $V_0$ and $M$ are denoted with $^*$. We also define the associated operators, $A\in\mathcal{L}(X_{\vect{g}},X_0^*)$, $B\in\mathcal{L}(X_{\vect{g}},M)$ and $B_0 \in \mathcal{L}(X_0, M)$ by
\begin{align}
\langle A \vect{u}, \vect{v} \rangle := a_\rho(\vect{u},\vect{v}), \;\; \langle B \vect{w}, q \rangle := b(\vect{w}, q), \;\; \text{and} \;\;  \langle B_0 \vect{v}, q \rangle := b(\vect{v}, q).
\end{align}
We note that $\text{ker}(B_0) = V_0$. From Theorem \ref{th:borrvallexistence}, we know that there exists a pair $(\vect{u}, \rho) \in V_{\vect{g}} \times C_\gamma$ that is a local minimizer for (\ref{borrvallmin}). For any given $\vect{v} \in V_0$, we see that $\vect{u} + t\vect{v} \in V_{\vect{g}}$, $t \in \mathbb{R}$. If $(\vect{u},\rho) \in V_{\vect{g}} \times C_\gamma$ is  a minimizer, then, by definition, there exists an $r > 0$ such that, for any $(\vect{w}, \eta) \in V_{\vect{g}} \times C_\gamma$, $(\vect{w},\eta) \neq (\vect{u},\rho)$ that satisfies
\begin{align}
\| \vect{u} - \vect{w}\|_{H^1(\Omega)} + \| \rho - \eta \|_{L^\infty(\Omega)}  <  r
\label{isolatedminr}
\end{align}
we have that $J(\vect{u}, \rho)  \leq  J(\vect{w}, \eta)$. Hence, for any given $\vect{v} \in V_0$, if $0 < t  <  r/\|\vect{v}\|_{H^1(\Omega)}$, the following inequality holds
\begin{align}
\frac{1}{t}(J(\vect{u}+t\vect{v}, \rho) - J(\vect{u},\rho)) \geq 0.
\end{align}
By Proposition \ref{prop:frechet}, $J$ is Fr\'echet differentiable, and therefore also Gateaux differentiable, with respect to $\vect{u}$. Hence as $t \to 0_+$, we see that
\begin{align}
\langle J_{\vect{u}}'(\vect{u},\rho), \vect{v} \rangle \geq 0  \;\; \text{for all} \; \vect{v} \in V_0.
\end{align}
By considering the same reasoning with $t<0$, we deduce that
\begin{align}
\langle J_{\vect{u}}'(\vect{u},\rho), \vect{v} \rangle = 0  \;\; \text{for all} \; \vect{v} \in V_0.
\end{align}
From Proposition \ref{prop:frechet}, we know that $J_{\vect{u}}'(\vect{u},\rho) = A \vect{u} - \vect{f}$ and hence $A \vect{u}-\vect{f} \in V_0^\circ$ where
\begin{align}
V_0^\circ := (\mathrm{ker}(B_0))^\circ = \{h \in X_0^* : \langle h, \vect{v} \rangle = 0 \; \text{for all} \; \vect{v} \in V_0\}.
\end{align}
We know that the operator $B_0$ satisfies the following equivalent version of the inf-sup condition \cite[Ch.~1, Sec.~4.1, Lem.~4.1]{Girault2012}:
\begin{align}
\text{there exists a}\; \beta>0 \; \text{such that, for all} \; q \in M, \; \|B_0^* q\|_{X_0^*} \geq \beta \|q\|_M,
\end{align}
where $B_0^*$ is the dual operator of $B_0$, defined by $\langle \vect{v}, B_0^* q \rangle = \langle B_0 \vect{v}, q \rangle$.
This implies that $B_0^*$ is injective (and therefore bijective) from $M$ into $\mathrm{Im} B_0^*$. Furthermore, it also implies that $(B_0^*)^{-1}$ is continuous. Consider $\vect{f} \in \mathrm{Im} B_0^*$; then, there exists a $q \in M$ such that $\vect{f} = B_0^* q$ and
\begin{align}
\|(B_0^*)^{-1}\vect{f}\|_M \leq \frac{1}{\beta} \|\vect{f}\|_{X_0^*}.
\end{align}
Therefore, $\mathrm{Im} B_0^*$ is closed.

Since $\mathrm{Im} B_0^*$ is closed, by Banach's closed range theorem, we know that $\mathrm{Im} B_0^* = (\mathrm{ker}(B_0))^\circ = V_0^\circ$. Hence, since $A \vect{u}-\vect{f} \in V_0^\circ$, there exists a $p \in M$ such that
\begin{align}
A \vect{u} + B_0^* p  = \vect{f}.
\end{align}
Since $B_0^*$ is injective, $p$ is also unique. Since $\vect{u} \in V_{\vect{g}}$, we have that $B\vect{u} = 0$. Hence (\ref{FOC1}) and (\ref{FOC2}) hold.

We will now show that (\ref{FOC3}) holds. We note that $C_\gamma$ is a convex subset of a linear space. For any given $\zeta, \eta \in C_\gamma$ and $t \in [0,1]$, we therefore have that $\zeta + t(\eta-\zeta) \in C_\gamma$. Since $(\vect{u},\rho)$ is a local minimizer, it follows that for each $\eta \in C_\gamma$, if $0 < t < r/\|\eta-\rho\|_{L^\infty(\Omega)}$, with $r$ as in (\ref{isolatedminr}), then
\begin{align}
\frac{1}{t} (J(\vect{u}, \rho + t(\eta-\rho)) - J(\vect{u},\rho)) \geq 0.
\end{align}
From Proposition \ref{prop:frechet}, we know that $J$ is Fr\'echet differentiable, and therefore also Gateaux differentiable, with respect to $\rho$. Hence, by taking the limit as $t \to 0$, we see that
\begin{align}
 c_{\vect{u}}(\rho, \eta-\rho) = \langle J'_\rho(\vect{u},\rho), \eta - \rho \rangle  \geq 0  \;\; \text{for all} \; \eta \in C_\gamma.
\end{align}
Therefore (\ref{FOC3}) holds.
\end{proof}

The following lemma will be used in the proof of the next proposition. 

\begin{lemma}
\label{lem:Eeps}
Consider a nonzero function $\eta \in C_\gamma$ and the measurable non-empty set $E \subset \subset \mathrm{supp}(\eta)$, where $\subset \subset$ denotes that the containment is compact and  $\mathrm{supp}$ denotes the support of a function, i.e.~$\eta > 0$ a.e.~in $E$. Then, there exists an $\epsilon' > 0$ such that, for all $\epsilon \in (0, \epsilon']$, there exists a set $E_\epsilon \subseteq E$, $|E_\epsilon|>0$ where $\eta > \epsilon$ a.e.~in $E_\epsilon$. 
\end{lemma}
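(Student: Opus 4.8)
The plan is to exhibit the required sets as super-level sets of $\eta$ over $E$ and to invoke the continuity of Lebesgue measure from below. First I would note that, for the conclusion to be attainable, $E$ must have positive Lebesgue measure, $|E|>0$; this is implicit in the hypotheses (a set of measure zero would render the assumption ``$\eta>0$ a.e.\ in $E$'' vacuous, and no subset of positive measure could then exist), so I take it as given.

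Next, for each $\epsilon>0$ I would define the measurable set
\[
E_\epsilon := \{\, x \in E : \eta(x) > \epsilon \,\},
\]
which is measurable because $\eta$ is. This family is monotone in $\epsilon$: if $0<\epsilon_1\le\epsilon_2$ then $E_{\epsilon_2}\subseteq E_{\epsilon_1}$, so the sets increase as $\epsilon\downarrow 0$. Their union is
\[
\bigcup_{\epsilon>0} E_\epsilon = \{\, x \in E : \eta(x) > 0 \,\},
\]
which, by the hypothesis that $\eta>0$ a.e.\ in $E$, agrees with $E$ up to a Lebesgue-null set; in particular this union has measure $|E|>0$.

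Finally, applying continuity of measure from below along a sequence $\epsilon_n\downarrow 0$ to the increasing family $\{E_{\epsilon_n}\}$, I obtain $\lim_{n\to\infty}|E_{\epsilon_n}| = |E| > 0$. Hence there exists $\epsilon'>0$ with $|E_{\epsilon'}|>0$. For any $\epsilon\in(0,\epsilon']$, monotonicity gives $E_\epsilon\supseteq E_{\epsilon'}$, so $|E_\epsilon|\ge|E_{\epsilon'}|>0$, while $\eta>\epsilon$ holds on $E_\epsilon$ by construction (in particular a.e.\ in $E_\epsilon$). Taking this $E_\epsilon$ as the desired set proves the claim. The argument is essentially routine; the only point demanding attention is the positivity $|E|>0$ recorded above, which is exactly what guarantees that the limiting measure is strictly positive and therefore that a suitable $\epsilon'$ can be chosen.
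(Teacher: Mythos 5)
Your proof is correct and is essentially the paper's argument: the sets $E\backslash\hat{E}_n$ in the paper's proof are exactly your super-level sets $E_{1/n}$, and both proofs hinge on continuity of Lebesgue measure from below applied to this increasing family whose union is $E$ up to a null set. The only difference is presentational --- the paper wraps the computation in a contradiction while you argue directly (and your direct handling of all $\epsilon\in(0,\epsilon']$ via monotonicity matches the paper's choice $E_\epsilon=E_{\epsilon'}$).
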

\begin{proof}
For a contradiction, suppose that there exists no such $\epsilon'$ such that $E_{\epsilon'}$ exists. This implies that 
\begin{align}
\text{for all} \;\; n \geq 0, \;\; |E \backslash \hat{E}_n| = 0,
\label{Eeps1}
\end{align}
where $\hat{E}_n := \{0 \leq \eta \leq 1/n \;\; \text{a.e.~in} \;\; E\}$. We see that $\varnothing = E \backslash \hat{E}_1 \subseteq E \backslash \hat{E}_2  \subseteq  \cdots \subseteq E \backslash \hat{E}_n \subseteq \cdots$, i.e.~$E \backslash \hat{E}_n$ is  nondecreasing. Note that the limit of a nondecreasing sequence of sets $(A_n)$ can be defined as $\lim_{n \to \infty} A_n := \cup_{n \geq 1} A_n$.  By (\ref{Eeps1}) we note that
\begin{align}
\lim_{n \to \infty} |E \backslash \hat{E}_n| = 0.
\label{Eeps2}
\end{align}
Moreover,
\begin{align}
\begin{split}
\cup_{n=1}^\infty E \backslash \hat{E}_n &= \lim_{n \to \infty} E \backslash \{0 \leq \eta \leq 1/n \;\; \text{a.e.~in} \;\; E\}\\
& = E \backslash \{ \eta = 0 \;\; \text{a.e.~in} \;\; E\} = E \backslash \varnothing = E.
\end{split}
\label{Eeps3}
\end{align}
Now we see that
\begin{align}
0 < |E| = |\cup_{n=1}^\infty E \backslash \hat{E}_n|= |\lim_{n \to \infty} E \backslash \hat{E}_n| = \lim_{n \to \infty}  |E \backslash \hat{E}_n| = 0,
\label{Eeps4}
\end{align}
where the first equality follows from (\ref{Eeps3}),  the second equality follows from the definition of the limit of a nondecreasing sequence of sets,  the third equality follows from the continuity of the Lebesgue measure, and the fourth equality follows from (\ref{Eeps2}). (\ref{Eeps4}) is a contradiction and, therefore, such an $\epsilon' > 0$ must exist. By choosing $E_\epsilon = E_{\epsilon'}$ for all $0 < \epsilon \leq \epsilon'$, we conclude that the statement holds for all $\epsilon \in (0, \epsilon']$. 
\end{proof}

In the result that follows, we are required to distinguish between different types of global and local minimizers.
\begin{definition}[Strict minimizer]
Let $Z$ be a Banach space and suppose that $z_0 \in Z$ is a local or global minimizer of the functional $J :Z\to \mathbb{R}$. We say that $z_0$ is a strict minimizer if there exists  an open neighborhood $E \subset Z$ of $z_0$ such that $J(z_0) < J(z)$ for all $z \neq z_0$, $z \in E$.
\end{definition}
\begin{definition}[Isolated minimizer]
Let $Z$ be a Banach space and suppose that $z_0 \in Z$ is a local or global minimizer of the functional $J :Z\to \mathbb{R}$. We say that $z_0$ is isolated if there exists  an open neighborhood $E \subset Z$ of $z_0$ such that there are no other minimizers contained in $E$.
\end{definition}
\begin{remark}
If $z$ is an isolated minimizer, then it is also a strict minimizer. 
\end{remark}
The following proposition is a property of strict minimizers that will be useful for the numerical analysis of the finite element method.

\begin{prop}
\label{prop:rhosupport}
Suppose that $\Omega \subset \mathbb{R}^d$ is a Lipschitz domain,  $d \in \{2,3\}$, and $\alpha$ satisfies properties \ref{alpha1}--\ref{alpha4}. Further assume that the minimizer $(\vect{u},\rho) \in H^1_{\vect{g},\mathrm{div}}(\Omega)^d \times C_\gamma$ of (\ref{borrvallmin}) is  a strict minimizer. Then, $\mathrm{supp}(\rho) \subseteq U$, where $U:=\mathrm{supp}(\vect{u})$.
\end{prop}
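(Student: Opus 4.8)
The plan is to argue by contradiction using the strictness of the minimizer: if $\mathrm{supp}(\rho)$ were not contained in $U=\mathrm{supp}(\vect{u})$, I would construct a distinct admissible material distribution $\tilde\rho$, arbitrarily close to $\rho$ in $L^\infty(\Omega)$, that leaves the value of $J$ unchanged, contradicting the defining inequality of a strict minimizer. The key structural observation is that $\rho$ enters $J$ only through the term $\alpha(\rho)|\vect{u}|^2$, so $\rho$ may be modified freely, without affecting $J$, wherever $\vect{u}$ vanishes.

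First I would convert the topological hypothesis into a measure-theoretic one. Supposing $\mathrm{supp}(\rho)\not\subseteq U$, pick $x_0\in\mathrm{supp}(\rho)\setminus U$. Since $U$ is closed there is an open ball $B:=B(x_0,r)\subseteq\Omega\setminus U$; by the definition of the (essential) support, $\vect{u}=\vect{0}$ a.e.\ on $\Omega\setminus U$, hence $|\vect{u}|^2=0$ a.e.\ on $B$, whereas $x_0\in\mathrm{supp}(\rho)$ forces $|\{x\in B:\rho(x)>0\}|>0$. Writing $S$ for this set, I obtain a set of positive measure on which $\rho>0$ and $|\vect{u}|^2=0$ simultaneously a.e. Passing from the containment of closed supports to a positive-measure witness of its failure is where the essential-support definition must be handled carefully, and I expect this to be the main technical point of the proof.

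Next I would promote $S$ to a set on which $\rho$ is bounded away from zero, so that a small downward perturbation stays nonnegative. Applying Lemma \ref{lem:Eeps} with $\eta=\rho$ (to a suitable subset of $S$ meeting its hypotheses) yields an $\epsilon>0$ and a set $S_\epsilon\subseteq S$ with $|S_\epsilon|>0$ and $\rho>\epsilon$ a.e.\ on $S_\epsilon$. For $\delta\in(0,\epsilon]$ I then set $\tilde\rho:=\rho-\delta\,\chi_{S_\epsilon}$. One checks that $\tilde\rho\in C_\gamma$: the bounds $0\le\tilde\rho\le 1$ hold because $\rho\ge\epsilon\ge\delta$ on $S_\epsilon$, and $\int_\Omega\tilde\rho\,\dx=\int_\Omega\rho\,\dx-\delta|S_\epsilon|\le\gamma|\Omega|$.

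Finally I would compute $J(\vect{u},\tilde\rho)-J(\vect{u},\rho)=\tfrac12\int_\Omega(\alpha(\tilde\rho)-\alpha(\rho))|\vect{u}|^2\,\dx$; since $\tilde\rho=\rho$ off $S_\epsilon$ and $|\vect{u}|^2=0$ a.e.\ on $S_\epsilon\subseteq S$, the integrand vanishes a.e.\ and the two energies coincide. As $\tilde\rho\neq\rho$ while $\|\tilde\rho-\rho\|_{L^\infty(\Omega)}=\delta$ can be chosen smaller than the radius of the neighborhood on which strictness holds, the pair $(\vect{u},\tilde\rho)$ is admissible, distinct from $(\vect{u},\rho)$, lies in that neighborhood, and satisfies $J(\vect{u},\tilde\rho)=J(\vect{u},\rho)$. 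This contradicts strictness and forces $\mathrm{supp}(\rho)\subseteq U$. I note that the argument relies only on the structure of $J$ and not on the first-order optimality conditions (\ref{FOC1})--(\ref{FOC3}).
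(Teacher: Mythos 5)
Your proposal is correct and follows essentially the same route as the paper's proof: a contradiction argument that perturbs $\rho$ downward by a small constant on a positive-measure subset of $\{\rho>0\}\setminus U$ (obtained via Lemma \ref{lem:Eeps}), leaving $J$ unchanged because $\vect{u}$ vanishes there, contradicting strictness. The only difference is cosmetic: you explicitly derive the positive-measure witness from the failure of the support containment, whereas the paper takes such a set $E$ directly as the contradiction hypothesis.
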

\begin{proof}
By definition of  a strict  minimizer, there exists an $r>0$ such that, for all $(\vect{w},\eta) \in H^1_{\vect{g},\mathrm{div}}(\Omega)^d \times C_\gamma$, $(\vect{w},\eta) \neq (\vect{u},\rho)$ that satisfies
\begin{align*}
\| \vect{u} - \vect{w}\|_{H^1(\Omega)} + \|\rho -\eta \|_{L^\infty(\Omega)}  <  r,
\end{align*}
we have that $J(\vect{u},\rho) < J(\vect{w}, \eta)$. For a contradiction, suppose that there exists a set $E \subset \Omega$, $E \cap U = \varnothing$, of positive measure, where $\rho > 0$ a.e.~in $E$. By Lemma \ref{lem:Eeps}, there exists an $\epsilon \in(0,r)$ such that there exists a set $E_\epsilon \subseteq E$, $|E_\epsilon| > 0$ where $\rho > \epsilon$ a.e.~in $E_\epsilon$.
Define $\tilde{\rho}$ as
\begin{align}
\tilde{\rho} := 
\begin{cases}
\rho & \;\; \text{a.e.~in} \;\; \Omega \backslash E_\epsilon,\\
\rho - \epsilon & \;\; \text{a.e.~in} \;\; E_\epsilon.
\end{cases}
\end{align}
As $\rho \in C_\gamma$, also $\tilde{\rho} \in C_\gamma$. We note that $\|\rho - \tilde{\rho}\|_{L^\infty(\Omega)} = \|\epsilon \|_{L^\infty(E_\epsilon)} < r$ and, therefore, $(\vect{u},\tilde{\rho})$ lies inside the minimizing neighborhood of the $(\vect{u},\rho)$. However, $J(\vect{u},\tilde{\rho}) = J(\vect{u},\rho)$ as $\rho$ and $\tilde{\rho}$ only differ on the set $E_\epsilon$, but $\vect{u} = \vect{0}$ a.e.~in $E_\epsilon \subseteq E$ by assumption. This contradicts the assertion that $(\vect{u},\rho)$ is  a strict  minimizer. 
\end{proof}

\section{Regularity of $\rho$}
\label{sec:regularity}

In  this  section we show that $\rho \in C_\gamma$ possesses higher regularity in the case of a homogeneous Dirichlet boundary condition on $\vect{u}$ and if $\alpha$ satisfies a stronger (but not restrictive) convexity assumption. 

{
\begin{theorem}[Regularity  of $\rho$]
\label{th:regularity1}
Suppose that the domain $\Omega \subset \mathbb{R}^d$ is bounded, the boundary is Lipschitz, and that the data $\vect{g} = \vect{0}$ on $\partial \Omega$. Consider a local or global minimizer, $(\vect{u}, \rho) \in H^1_{0, \mathrm{div}}(\Omega)^d \times C_\gamma$, of (\ref{borrvallmin}) such that $\vect{u}$ is not the  zero function  and there exists a closed subset $ \bar U_\theta  \subset \Omega$ with non-empty interior on which $|\vect{u}|^2$ is bounded below by a positive constant, $\theta > 0$. Suppose that \ref{alpha1}--\ref{alpha4} hold and that $\alpha \in C^2([0,1])$ is strongly convex, i.e.,
\begin{enumerate}[label=({A}\arabic*)]
\setlength\itemsep{0em}
\setcounter{enumi}{4}
\item There exists a constant $\alpha''_{\mathrm{min}}>0$ such that $\alpha ''(y) \geq \alpha''_{\mathrm{min}} > 0$ for all $y \in [0,1]$. \label{alpha5}
\end{enumerate}
 Consider the (non-empty) open interior $U_\theta \subset \bar{U}_\theta$.  Then, $\nabla \rho$ exists in $U_\theta$ and $\rho \in C_\gamma \cap H^1(U_\theta)$. 
\end{theorem}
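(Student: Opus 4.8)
The plan is to characterise $\rho$ pointwise on $U_\theta$ as a Lipschitz function of $|\vect{u}|^2$, and then to transfer the Sobolev regularity of $\vect{u}$ to $\rho$ through a chain rule. The starting point is the variational inequality (\ref{FOC3}). Since the map $\eta \mapsto \tfrac{1}{2}\int_\Omega \alpha(\eta)|\vect{u}|^2 \, \dx$ is convex on the convex set $C_\gamma$ (by \ref{alpha2} together with $|\vect{u}|^2 \geq 0$), the inequality (\ref{FOC3}) is not merely necessary but also sufficient: it identifies $\rho$ as the \emph{global} minimiser of
\[
\min_{\eta \in C_\gamma} \frac{1}{2}\int_\Omega \alpha(\eta)\,|\vect{u}|^2 \, \dx,
\]
the remaining term $\tfrac{1}{2}\int_\Omega (\nu|\nabla \vect{u}|^2 - 2\vect{f}\cdot\vect{u})\,\dx$ of $J$ being independent of $\eta$.

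First I would extract a Lagrange multiplier for the volume constraint. Writing $C_\gamma = \{0 \leq \eta \leq 1\} \cap \{\int_\Omega \eta \, \dx \leq \gamma|\Omega|\}$, I dualise only the integral constraint, keeping the box constraint as the ``simple'' feasible set. Because $\eta \equiv 0$ lies in the box and satisfies $\int_\Omega 0 \, \dx = 0 < \gamma |\Omega|$, Slater's condition holds, so there is a constant $\lambda \geq 0$ for which $\rho$ minimises the Lagrangian $\tfrac{1}{2}\int_\Omega \alpha(\eta)|\vect{u}|^2 \, \dx + \lambda \int_\Omega \eta \, \dx$ over $\{0 \leq \eta \leq 1\}$. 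As this Lagrangian decouples pointwise, for a.e.\ $x$,
\[
\rho(x) = \operatorname*{arg\,min}_{s \in [0,1]} \Big( \tfrac{1}{2}\alpha(s)\,|\vect{u}(x)|^2 + \lambda s \Big).
\]
Now I exploit the strong convexity \ref{alpha5}. On $U_\theta$ we have $|\vect{u}|^2 \geq \theta$, so the scalar objective $g(s;t) := \tfrac{1}{2}\alpha(s)t + \lambda s$ obeys $\partial_{ss}g = \tfrac{1}{2}\alpha''(s)t \geq \tfrac{1}{2}\alpha''_{\mathrm{min}}\theta > 0$ for $t \geq \theta$, i.e.\ it is uniformly strongly convex, with unique minimiser $\Phi(t) := \operatorname*{arg\,min}_{s\in[0,1]} g(s;t)$. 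Since $\partial_{st}g = \tfrac{1}{2}\alpha'(s)$ is bounded on $[0,1]$, a standard stability estimate for minimisers of uniformly strongly convex functionals (equivalently, the clamped inverse formula $\Phi(t) = P_{[0,1]}[(\alpha')^{-1}(-2\lambda/t)]$, using that $(\alpha')^{-1}$ is $1/\alpha''_{\mathrm{min}}$-Lipschitz and $t \mapsto -2\lambda/t$ is Lipschitz on $[\theta,\infty)$) shows that $\Phi$ is Lipschitz on $[\theta,\infty)$. Consequently $\rho = \Phi(|\vect{u}|^2)$ a.e.\ in $U_\theta$.

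It remains to show $|\vect{u}|^2 \in H^1(U_\theta)$ and to conclude. Here I use that $\bar U_\theta$ is a compact subset of the open set $\Omega$, hence $\bar U_\theta \subset\subset \Omega$ with $\mathrm{dist}(\bar U_\theta, \partial\Omega) > 0$. Rewriting (\ref{FOC1})--(\ref{FOC2}) as the Stokes system $-\nu\Delta\vect{u} + \nabla p = \vect{f} - \alpha(\rho)\vect{u}$, $\mathrm{div}\,\vect{u} = 0$, whose right-hand side lies in $L^2(\Omega)^d$ (as $\alpha(\rho) \in L^\infty$ and $\vect{u} \in L^2$), interior regularity for the Stokes operator gives $\vect{u} \in H^2$ on a neighbourhood of $\bar U_\theta$ that is still compactly contained in $\Omega$. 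For $d \in \{2,3\}$ the embedding $H^2 \hookrightarrow L^\infty$ then yields $\vect{u} \in L^\infty(U_\theta)^d$ with $\nabla\vect{u} \in L^2(U_\theta)$, so $\nabla(|\vect{u}|^2) = 2\sum_i u_i \nabla u_i \in L^2(U_\theta)$ and $|\vect{u}|^2 \in H^1(U_\theta)$. Finally, the chain rule for the composition of a Lipschitz function with an $H^1$ function (Stampacchia) gives $\rho = \Phi(|\vect{u}|^2) \in H^1(U_\theta)$, with $\nabla\rho = \Phi'(|\vect{u}|^2)\,\nabla(|\vect{u}|^2)$ a.e.

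I expect the main obstacle to be the rigorous existence of the multiplier $\lambda$ together with the pointwise decoupling: in particular, justifying that the minimiser of the integral functional over $C_\gamma$ agrees a.e.\ with the pointwise minimiser of the Lagrangian, and that $\lambda$ is a genuine constant independent of $x$. A secondary technical point is invoking interior Stokes regularity on the Lipschitz domain; this must be carried out away from $\partial\Omega$ (which is precisely why $\bar U_\theta \subset\subset \Omega$ is essential) and requires checking that the low regularity of $\alpha(\rho) \in L^\infty$ enters only through the $L^2$ right-hand side, so that a single elliptic bootstrap to $H^2_{\mathrm{loc}}$ suffices.
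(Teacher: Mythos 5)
Your proposal is correct, but it takes a genuinely different route from the paper's. The paper stays entirely with the variational inequality (\ref{FOC3}) and runs a difference-quotient argument in the style of Evans: it tests with $\eta = (\rho^h + \rho^{-h})/2$, performs the discrete analogue of integration by parts, uses the strong convexity \ref{alpha5} to extract a term bounded below by a multiple of $(|\vect{u}^h|^2 + |\vect{u}|^2)\,|D^h_k \rho|^2$, controls the remainder via $\|D^h_k \vect{u}\|_{L^2} \leq C\|\nabla \vect{u}\|_{L^2(\Omega)}$, and concludes from $\sup_h \|D^h_k \rho\|_{L^2(U_\theta)} < \infty$ by weak compactness. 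You instead exploit that the reduced problem in $\eta$ is convex, so (\ref{FOC3}) upgrades to global minimality; a scalar multiplier (Slater at $\eta \equiv 0$) decouples the problem pointwise, \ref{alpha5} makes the pointwise minimiser $\Phi$ Lipschitz on $[\theta,\infty)$, and the feedback law $\rho = \Phi(|\vect{u}|^2)$ a.e.\ in $U_\theta$ combines with interior Stokes regularity and Stampacchia's chain rule. The trade-offs are real: the paper's proof uses only $\vect{u} \in H^1(\Omega)^d$ and (\ref{FOC3}) --- no pressure, no elliptic regularity --- but it genuinely needs $\vect{g} = \vect{0}$ in order to extend $(\vect{u},\rho)$ by zero and translate across $\partial \Omega$, since $\mathrm{supp}(\vect{u})$ may touch the boundary, and it produces the explicit bound (\ref{reg19}); your proof invokes heavier machinery (Proposition \ref{prop:pressureexistence} for the pressure, interior $H^2_{\mathrm{loc}}$ Stokes regularity, infinite-dimensional KKT) but yields strictly more structure --- a pointwise characterization of $\rho$ on $U_\theta$, hence higher regularity of $\rho$ wherever $\vect{u}$ is smoother --- and, notably, it nowhere uses $\vect{g} = \vect{0}$, since interior regularity is insensitive to boundary data, so it already delivers the generalisation to inhomogeneous Dirichlet conditions that the paper only hypothesises in the remark following Theorem \ref{th:regularity1}. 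Two points you flag can be settled as follows: the multiplier step is unproblematic precisely because the dualised constraint is scalar (separation in $\mathbb{R}^2$, multiplier $\lambda \in [0,\infty)$), and the measurable-selection worry can be bypassed by reading off the pointwise (bathtub) conditions $\tfrac{1}{2}\alpha'(\rho(x))|\vect{u}(x)|^2 + \lambda \geq 0$ where $\rho(x)=0$, $=0$ where $0<\rho(x)<1$, and $\leq 0$ where $\rho(x)=1$, directly from the variational inequality for the Lagrangian over the box; on $U_\theta$ strong convexity makes the pointwise minimiser unique, so $\rho = \Phi(|\vect{u}|^2)$ follows without selection theorems. Finally, state the clamped formula with care: $(\alpha')^{-1}(-2\lambda/t)$ is undefined when $-2\lambda/t \notin \alpha'([0,1])$, so your strong-convexity stability estimate, rather than that formula, is the correct justification of the Lipschitz property of $\Phi$.
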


\begin{remark}
The assumption \ref{alpha5} excludes the case where $\alpha$ is linear. This is consistent with previous theory, as Borrvall and Petersson \cite{Borrvall2003} showed that if $\alpha$ is linear,  then there exists a minimizer $(\vect{u}, \rho)$ where $\rho$ is a 0-1 solution (a linear combination of Heaviside functions) and thus $\rho \notin H^1(\Omega)$ due to the jumps.  However, the assumptions \ref{alpha1}--\ref{alpha5} do include (\ref{eq:alphachoice}), where the lower bound in \ref{alpha5} is $\alpha_{\mathrm{min}}''= 2\bar\alpha q/(q+1)^2$.  We see that this lower bound degrades to zero as $q \to \infty$.  As previously noted, the limit $q \to \infty$ coincides with $\alpha(\rho) \to  \bar{\alpha}(1-\rho)$, which is a linear function.  
\end{remark}

\begin{proof}[Proof of Theorem \ref{th:regularity1}]
 Let $\partial_{x_k}$ denote the partial derivative with respect to $x_k$.  If we can bound the $L^2$-norm of the difference quotients of $\rho$, in all coordinate directions in $U_\theta$, above by constants independent of $h$, then, by taking the weak limit, we deduce that $\partial_{x_k} \rho$ exists as an element of $L^2(U_\theta)$ for $1 \leq k \leq d$.

The variational inequality on $\rho$ states that
\begin{align}
\frac{1}{2} \int_\Omega \alpha'(\rho)|\vect{u}|^2 (\eta-\rho) \; \dx \geq 0. \label{regVI}
\end{align}
We define $U \subset \Omega$ as $U:=\text{supp}(\vect{u})$ and fix an open, bounded and connected domain $\hat \Omega$ such that $\hat \Omega = \Omega$ if $U \subset \subset  \Omega$ and $\Omega \subset \subset \hat \Omega$ otherwise. In the case where $U$ is not a compact subset of $\Omega$, we extend $\vect{u}$ and $\rho$ by zero to the whole of $\mathbb{R}^d$. Since the trace of $\vect{u}$ is zero on the boundary, the extension of $\vect{u}$ by zero lives in $H^1(\hat \Omega)^d$. Let $0<|h|<(1/2)\mathrm{dist}(U,\partial \hat\Omega)$ and choose $k \in \{1, \dots, d\}$. We define $\rho^h$ as 
\begin{align*}
\rho^h(x) = 
\begin{cases}
\rho(x+he_k) & \text{for} \; x \in \Omega-he_k,\\
0  &\text{for} \; x \in \mathbb{R}^d \backslash (\Omega-he_k).
\end{cases}
\end{align*}
 Analogously, we define $\vect{u}^h$ as
\begin{align*}
\vect{u}^h(x) = 
\begin{cases}
\vect{u}(x+he_k) & \text{for} \; x \in \Omega-he_k,\\
\vect{0}  &\text{for} \; x \in \mathbb{R}^d \backslash (\Omega-he_k).
\end{cases}
\end{align*}
 We define the difference quotient, $D^h_k$, in the $k$-th coordinate direction, as
\begin{align*}
D^h_k  \rho(x) = \frac{ \rho(x+he_k) - \rho(x)}{h}, \;\; h \in \mathbb{R}\backslash \{0\}, \; x \in  \hat\Omega.
\end{align*}
Let $\eta  =( \rho^h + \rho^{-h})/2$. We note that $\eta \in C_\gamma$, since
\begin{align*}
0 \leq \frac{1}{2} \rho^h \leq \frac{1}{2} \; \mathrm{a.e.\; in} \; \Omega, \quad \text{and} \quad
0 \leq \frac{1}{2} \rho^{-h} \leq \frac{1}{2} \; \mathrm{a.e.\; in} \; \Omega,
\end{align*}
which implies that $0 \leq \eta \leq 1$ a.e.\ in $\Omega$ and
\begin{align*}
\int_\Omega \eta \; \dx 
&= \frac{1}{2} \int_\Omega \rho^h + \rho^{-h} \; \dx \\
&= \frac{1}{2} \int_{\Omega -he_k \cap \Omega} \rho \;\dx+ \frac{1}{2} \int_{\Omega +he_k \cap \Omega} \rho \;\dx 
\leq \int_\Omega \rho \; \dx \leq \gamma |\Omega|.
\end{align*}
If we multiply (\ref{regVI}) through by 4 and divide by $h^2$ we see that
\begin{align}
\frac{1}{h^2}\int_{\Omega}\alpha'( \rho)| \vect{u}|^2 (  \rho^h + \rho^{-h} - 2  \rho) \; \dx \geq 0. \label{reg5}
\end{align}
We note that,
\begin{align*}
D^{-h}_k (D^h_k \rho) = \frac{\frac{\rho -  \rho^{-h}}{h} -\frac{ \rho^h - \rho}{h}}{-h} = \frac{\rho^h +  \rho^{-h} -2\rho }{h^2}.
\end{align*}
Hence, because $\vect{u}$ is zero outside of $\Omega$, (\ref{reg5}) is equivalent to
\begin{align}
\int_{ \hat\Omega}\alpha'( \rho)| \vect{u}|^2 (D^{-h}_k ( D^h_k  \rho)) \; \dx \geq 0. \label{reg3}
\end{align}
In order to obtain a first-order difference quotient, we will perform the finite difference analogue of integration by parts to shift the $D^{-h}_k$ operator from $D^h_k\rho$ to $\alpha'(\rho)|\vect{u}|^2$. We note that, by definition, the left-hand side of (\ref{reg3}) is equal to
\begin{align}
-\frac{1}{h} \int_{ \hat\Omega} (\alpha'( \rho)| \vect{u}|^2)(x)  \left((D^h_k  \rho)(x-he_k) - (D^h_k  \rho)(x) \right) \dx, \label{eq:regfd}
\end{align}
which by a change of variables is equal to
\begin{align*}
-\frac{1}{h} \left( \int_{ \hat\Omega-he_k} (\alpha'(\rho)| \vect{u}|^2)(x + he_k)  (D^h_k  \rho)(x)\dx -  \int_{ \hat\Omega} (\alpha'(\rho)| \vect{u}|^2)(x) (D^h_k \rho)(x) \dx\right).
\end{align*}
We note that $U \subset \subset  \hat \Omega$ and $|h| < (1/2)\text{dist}(U, \partial  \hat\Omega)$, which implies that $U \subset \subset \hat\Omega - he_k$. Therefore,
\begin{align}
\begin{split}
&\int_{ \hat \Omega-he_k} (\alpha'( \rho)| \vect{u}|^2)(x + he_k)  (D^h_k  \rho)(x)\dx \\
&\indent = \int_{U - he_k} (\alpha'(\rho)| \vect{u}|^2)(x + he_k)  (D^h_k  \rho)(x)\dx\\
&\indent \indent= \int_{ \hat\Omega} (\alpha'(\rho)| \vect{u}|^2)(x + he_k)  (D^h_k  \rho)(x)\dx. \label{reg4}
\end{split}
\end{align}
Therefore, from (\ref{reg3})--(\ref{reg4}) we see that
\begin{align}
\int_{ \hat\Omega}D^h_k (\alpha'( \rho)| \vect{u}|^2) ( D^h_k   \rho) \; \dx \leq  0. \label{reg6}
\end{align}
Now we wish to rewrite $D^h_k(\alpha'(\rho)|\vect{u}|^2)$ in a form that we can decouple from $D^h_k \rho$ in order to be able to bound (\ref{reg6}) above and below. Now,
\begin{align*}
D^h_k (\alpha'( \rho)| \vect{u}|^2)(x) &= \frac{1}{h}\left(\alpha'( \rho(x+he_k))| \vect{u}(x+he_k)|^2 - \alpha'( \rho(x))| \vect{u}(x)|^2\right)\\
&= \frac{1}{2h}\left(\alpha'( \rho(x+he_k)) \left(|\vect{u}(x+he_k)|^2 - | \vect{u}(x)|^2 \right)\right)\\
& \indent + \frac{1}{2h}\left(\alpha'( \rho(x)) \left(|\vect{u}(x+he_k)|^2 - | \vect{u}(x)|^2 \right)\right)\\
& \indent + \frac{1}{2h}\left( | \vect{u}(x+he_k)|^2 \left( \alpha'( \rho(x+he_k)) - \alpha'( \rho(x)) \right) \right)\\
& \indent + \frac{1}{2h}\left( | \vect{u}(x)|^2 \left( \alpha'( \rho(x+he_k)) - \alpha'( \rho(x)) \right) \right)\\
& = \frac{1}{2}\left(\alpha'( \rho^h) + \alpha'( \rho) \right)D^h_k(|\vect{u}|^2) + \frac{1}{2}\left(|\vect{u}^h|^2+ |\vect{u}|^2\right) D^h_k (\alpha'( \rho)).
\end{align*}
Therefore, from (\ref{reg6}) we see that
\begin{align}
\begin{split}
&\int_{ \hat\Omega} \left[ \frac{1}{2} \left(| \vect{u}^h|^2 +|\vect{u}|^2 \right) D^h_k(\alpha'(  \rho) ) \right.\\
&\indent \indent \indent \left.+ \frac{1}{2} \left(\alpha'(  \rho^h)+\alpha'(  \rho) \right) D^h_k | \vect{u}|^2 \right]  D^h_k  ( \rho )\; \dx \leq   0. \label{reg7}
\end{split}
\end{align}
Now,
\begin{align*}
&\frac{1}{2} \left(| \vect{u}^h|^2 +|\vect{u}|^2 \right) D^h_k(\alpha'(  \rho) ) + \frac{1}{2}\left( \alpha'(  \rho^h)+\alpha'(  \rho) \right) D^h_k | \vect{u}|^2  \\
&\indent = \frac{1}{h} \int_0^1 \frac{\text{d}}{\text{d}s} \left[ \alpha'\left(s  \rho^h + (1-s) \rho\right) \frac{1}{2} \left(| \vect{u}^h|^2 +|\vect{u}|^2 \right)  \right.\\
&\indent \indent + \left. \frac{1}{2}\left( \alpha'(  \rho^h)+\alpha'(  \rho) \right) \left| s  \vect{u}^h + (1-s)  \vect{u}\right|^2 \right] \text{d}s\\
&\indent = \frac{1}{h} \underbrace{\int_0^1 \left[ \alpha''\left(s   \rho^h + (1-s)  \rho\right) \right] \text{d}s}_{=:A}  \frac{1}{2} \left(| \vect{u}^h|^2 +|\vect{u}|^2 \right)  (  \rho^h- \rho)\\
& \indent \indent + \frac{1}{2h} \left( \alpha'(  \rho^h)+\alpha'(  \rho) \right)\underbrace{\int_0^1 \left[ 2\left( s  \vect{u}^h + (1-s)  \vect{u}\right)  \right] \text{d}s}_{=:\vect{B}} \cdot \; ( \vect{u}^h -  \vect{u}).
\end{align*}
Hence from (\ref{reg7}) we find that
\begin{align}
\frac{1}{2}\int_{ \hat\Omega} A(|\vect{u}^h|^2 +|\vect{u}|^2) |D^h_k  \rho|^2 +( \alpha'( \rho^h) + \alpha'( \rho))\vect{B} \cdot (D^h_k \vect{u})  D^h_k \rho \; \dx  \leq  0. \label{reg8}
\end{align}
Subtracting the second term on the left-hand side in (\ref{reg8}) from both sides, taking absolute values on the right-hand side, using the Cauchy--Schwarz inequality and multiplying by 2, we see that
\begin{align}
\int_{ \hat\Omega} A(|\vect{u}^h|^2 +|\vect{u}|^2)  |D^h_k  \rho|^2  \dx &\leq \int_{ \hat\Omega}  |\vect{B}| |\alpha'( \rho^h) + \alpha'( \rho) | |D^h_k  \vect{u}| |D^h_k  \rho| \dx. \label{reg9}
\end{align}
Furthermore we note that $A \geq \alpha''_{\mathrm{min}}$ and
\begin{align*}
\vect{B} =  \int_0^1 \left[ 2\left( s  \vect{u}^h + (1-s)  \vect{u}\right)  \right] \text{d}s = 2\left[ \frac{s^2}{2} \vect{u}^h + \left( s- \frac{s^2}{2}\right)  \vect{u} \right]^1_0 = \vect{u}^h + \vect{u}.
\end{align*}
Hence, using Cauchy's inequality and Young's inequality, we see that 
\begin{align}
\begin{split}
&\alpha''_{\mathrm{min}} \int_{U-he_k} |\vect{u}^h|^2|D^h_k \rho|^2  \dx + \alpha''_{\mathrm{min}} \int_{U} |\vect{u}|^2 |D^h_k \rho|^2  \dx\\
&\leq \int_{ \hat\Omega}  |\vect{u}+\vect{u}^h| |\alpha'( \rho^h) + \alpha'( \rho)| |D^h_k  \vect{u}| |D^h_k \rho| \dx\\
&\leq \int_{U-he_k}  |\vect{u}^h| |\alpha'( \rho^h) + \alpha'( \rho)| |D^h_k  \vect{u}| |D^h_k \rho| \dx \\
&\indent + \int_{U}  |\vect{u}| |\alpha'( \rho^h) + \alpha'( \rho)| |D^h_k  \vect{u}| |D^h_k \rho| \dx\\
& \leq  \frac{\epsilon}{2} \int_{U-he_k}  |\vect{u}^h|^2  |D^h_k \rho|^2 \dx +  \frac{\epsilon}{2} \int_{U}  |\vect{u}|^2  |D^h_k \rho|^2 \dx\\
&\indent + \frac{1}{2 \epsilon}\int_{U-he_k} |\alpha'(\rho^h)+\alpha'(\rho)|^2 |D^h_k  \vect{u}|^2 \dx\\
&\indent \indent + \frac{1}{2 \epsilon}\int_{U} |\alpha'(\rho^h)+\alpha'(\rho)|^2 |D^h_k  \vect{u}|^2 \dx. \label{reg16}
\end{split}
\end{align}
By fixing $\epsilon = \alpha''_{\mathrm{min}}$, {from (\ref{reg16})} we see that,
\begin{align}
\begin{split}
& \frac{\alpha''_{\mathrm{min}}}{2} \int_{U}|\vect{u}|^2 |D^h_k \rho|^2  \dx\\
& \indent \leq   \frac{\alpha''_{\mathrm{min}}}{2}  \int_{U} |\vect{u}|^2 |D^h_k \rho|^2  \dx + \frac{\alpha''_{\mathrm{min}}}{2} \int_{U-he_k}  |\vect{u}^h|^2 |D^h_k \rho|^2  \dx\\
&\indent \leq  \frac{1}{\alpha''_{\mathrm{min}}}\int_{\hat \Omega} |\alpha'(\rho^h)+ \alpha'( \rho)|^2 |D^h_k  \vect{u}|^2 \dx. \label{reg17}
\end{split}
\end{align}

Now $|\alpha'(\rho^h)+\alpha'(\rho)|^2$  is bounded above by $4\sup_{\zeta \in C_\gamma}|\alpha'(\zeta)|^2$ which is independent of $h$. Consider a set $\tilde \Omega \subset \mathbb{R}^d$ such that $\hat \Omega \subset \subset \tilde \Omega$. We note that $\vect{u} \in H^1(\tilde\Omega)^d$. By applying Theorem 3 in \cite[pg.~294]{Evans2010}, we see that
\begin{align}
\begin{split}
 \int_{U} |\vect{u}|^2 |D^h_k  \rho|^2 \; \dx &\leq \frac{\tilde C(\Omega)\sup_{\zeta \in C_\gamma}|\alpha'(\zeta)|^2}{(\alpha''_{\mathrm{min}})^2 }   \| \nabla \vect{u} \|_{L^2(\tilde \Omega)}^2\\
 & \leq \frac{\hat C(\Omega)\sup_{\zeta \in C_\gamma}|\alpha'(\zeta)|^2}{(\alpha''_{\mathrm{min}})^2 }   \| \nabla  \vect{u} \|_{L^2( \Omega)}^2 \leq C <\infty, \label{reg14}
 \end{split}
\end{align}
where  $\tilde C$, $\hat C$ and $C$ are constants. The bound is independent of $h$ and $k$.  Because, by hypothesis, there exists a subset $U_\theta \subset \Omega$ such that, $|\vect{u}|^2 \geq \theta > 0$ a.e.~in $U_\theta$, we see from (\ref{reg14}) that, because $U_\theta \subset U = \mathrm{supp}(\vect{u})$, also
\begin{align}
\theta \int_{U_\theta} |D^h_k \rho|^2 \dx \leq  \int_{{U_\theta}} |\vect{u}|^2 |D^h_k  \rho|^2 \; \dx  \leq  \int_{U} |\vect{u}|^2 |D^h_k  \rho|^2 \; \dx \leq C. \label{reg15}
\end{align}
Estimate (\ref{reg15}) implies that
\begin{align}
\sup_h \|D^h_k \rho\|_{L^2({U_\theta})} <\infty. \label{reg18}
\end{align}
From (\ref{reg18}) we see that there exists a function $\eta_k \in L^2({U_\theta})$ and a subsequence $h_i \to 0$ such that,
\begin{align*}
D^{h_i}_k \rho \weak \eta_k \;\; \text{weakly in} \; L^2({U_\theta}).
\end{align*}
Finally, we wish to identify $\eta_k$ with $\partial_{x_k} \rho$. First choose any smooth and compactly supported function, $\phi \in C^\infty_c({U_\theta})$. We note that
\begin{align*}
\int_{U_\theta} \rho \; \partial_{x_k} \phi \; \dx \leq C \|\rho\|_{L^\infty({U_\theta})} \|\phi\|_{W^{1,\infty}({U_\theta})} < \infty.
\end{align*}
Since $\partial_{x_k} \phi$ is compactly supported in ${U_\theta}$, it follows that
\begin{align*}
\int_{U_\theta} \rho \; \partial_{x_k} \phi \; \dx &= \int_{\hat \Omega}\rho \; \partial_{x_k} \phi \; \dx.
\end{align*}
Hence
\begin{align*}
\int_{U_\theta} \rho \; \partial_{x_k} \phi \; \dx & = \lim_{h_i \to 0}  \int_{\hat \Omega}\rho \; D^{-h_i}_k \phi \; \dx\\
&= -\lim_{h_i \to 0} \int_{\hat\Omega} (D^{h_i}_k \rho) \phi \; \dx
= -\lim_{h_i \to 0} \int_{{U_\theta}} (D^{h_i}_k \rho) \phi \; \dx
 = - \int_{U_\theta} \eta_k \phi \; \dx.
\end{align*}
Hence $\eta_k = \partial_{x_k} \rho$ a.e.\ in ${U_\theta}$ for $k = 1, \dots, d$. Therefore, from  (\ref{reg15})  we see by weak lower semicontinuity that
\begin{align}
\int_{U_\theta} \left| \partial_{x_k} \rho \right|^2 \dx \leq C(\Omega, \sup_{\zeta \in C_\gamma}|\alpha'(\zeta)|^2,  \alpha''_{\mathrm{min}},  \theta ), \label{reg19}
\end{align}
for some constant $C$. We conclude that $\rho \in H^1({U_\theta}) \cap C_\gamma$, $\theta > 0$.
\end{proof}}

\begin{remark}
The assumption that the boundary datum $\vect{g} = \vect{0}$ on $\partial \Omega$ is required so as to expand the domain of integration from $\Omega$ in (\ref{reg5}) to $\hat \Omega$ in (\ref{reg3}) in order to perform the finite difference analogue of integration by parts in (\ref{eq:regfd}). A homogeneous Dirichlet boundary condition on $\vect{u}$ is rarely imposed in practice. However, we observe during numerical experiments that $\rho$ possesses additional regularity in the case of inhomogeneous Dirichlet boundary conditions, and we hypothesize that the results can be generalized to that case. 
\end{remark}

\section[FEM approximation]{Finite element approximation}
\label{sec:femapprox}
We will be approximately solving (\ref{FOC1})--(\ref{FOC3}) by approximating the solutions  of the infinite-dimensional problem  with finite element functions. Borrvall and Petersson \cite[Sec.~3.3]{Borrvall2003} considered a piecewise constant finite element approximation of the material distribution coupled with an inf-sup stable quadrilateral finite element approximation of the velocity and the pressure. They showed that such approximations of the velocity and material distribution converge to an unspecified solution $(\vect{u}, \rho)$ of (\ref{borrvallmin}) in the following sense \cite{Borrvall2003}:
\begin{align*}
\vect{u}_h &\weak \vect{u} \;\; \text{weakly in} \;\; H^1(\Omega)^d, \\
\rho_h &\weakstar \rho \;\;  \text{weakly-* in} \;\; L^\infty(\Omega),\\
\rho_h &\to \rho \;\;  \text{strongly in} \;\; L^s(\Omega_b), \;\; s \in [1,\infty),
\end{align*}
where $\Omega_b$ is any measurable subset of $\Omega$ where $\rho$ is equal to zero or one a.e. There are no proven convergence results for the finite element approximation of the pressure, $p$. Since there can be multiple local minimizers, it is unclear which solution the sequence of finite element solutions is converging to. In this section, we consider any suitable conforming mixed finite element space such that the velocity and pressure spaces are inf-sup stable. We prove that, for \emph{every} isolated minimizer of (\ref{borrvallmin}), there exist sequences of finite element solutions to the discretized first-order optimality conditions that strongly converge to the minimizer as the mesh size tends to zero. More specifically, we show that, for each isolated minimizer  of the infinite-dimensional problem,  there exist (possibly different) sequences of finite element solutions $(\vect{u}_h, \rho_h, p_h)$ that converge to it strongly in $H^1(\Omega)^d \times L^s(\Omega) \times L^2(\Omega)$ as $h \to 0$, where $s \in [1,\infty)$. We emphasize that the results hold in the case where the local minima are isolated.

Consider the conforming finite element spaces $X_h \subset H^1(\Omega)^d$,  $C_{\gamma,h} \subset C_\gamma$,  and $M_h \subset L^2_0(\Omega)$. Let $X_{0,h} :=\{\vect{v}_h \in X_h : \vect{v}_h|_{\partial \Omega} = \vect{0}\}$.

In general, it will not be possible to represent the boundary data $\vect{g}$ exactly in the velocity finite element space. Hence, for each $h$, we  instead consider boundary data $\vect{g}_h$ (which can be represented) and assume that
\begin{enumerate}[label=({F}\arabic*)]
\item $\vect{g}_h \to \vect{g}$ strongly in $H^{1/2}(\partial \Omega)^d$. \label{ass:gh}
\end{enumerate}
We now define the space $X_{\vect{g}_h,h} :=\{\vect{v}_h \in X_h : \vect{v}_h|_{\partial \Omega} = \vect{g}_h\}$. We will also assume that:
\begin{enumerate}[label=({F}\arabic*)]
\setcounter{enumi}{1}
\item $X_{0,h}$ and $M_h$ satisfy the following inf-sup condition for some $c_b > 0$,  independent of $h$, 
\begin{align}
c_b \leq \inf_{q_h \in M_h \backslash \{0\}} \sup_{\vect{v}_h \in X_{0,h} \backslash \{0\}} \frac{b(\vect{v}_h,q_h)}{\|\vect{v}_h\|_{H^1(\Omega)}\|q_h\|_{L^2(\Omega)}}.
\end{align}
\label{ass:discreteinfsup}
\item The finite element spaces are dense in their respective function spaces, i.e., for any $(\vect{v}, \eta, q) \in H^1(\Omega)^d \times C_\gamma \times L^2_0(\Omega)$, \label{ass:dense}
\begin{align*}
\lim_{h \to 0} \inf_{\vect{w}_h \in X_{h}}\|\vect{v}-\vect{w}_h\|_{H^1(\Omega)} &= \lim_{h\to 0} \inf_{\zeta_h \in C_{\gamma,h}} \|\eta - \zeta_h \|_{L^2(\Omega)} \\
&\indent = \lim_{h\to 0} \inf_{r_h \in M_h} \|q - r_h \|_{L^2(\Omega)} = 0.
\end{align*}
\end{enumerate}

\begin{theorem}[Convergence of the finite element method]
\label{th:FEMexistence}
Let $\Omega \subset \mathbb{R}^d$ be a polygonal domain in two dimensions or a polyhedral  Lipschitz domain in three dimensions. Suppose that \ref{alpha1}--\ref{alpha5} hold and there exists an isolated local minimizer $(\vect{u},\rho) \in H^1_{\vect{g},\mathrm{div}}(\Omega)^d \times C_\gamma$ of (\ref{borrvallmin}). Moreover, assume that, for $\theta > 0$, $\bar U_\theta$ is the subset of $\Omega$ where $|\vect{u}|^2 \geq \theta$ a.e.~in $\bar U_\theta$ and suppose that there exists a $\theta'>0$ such that $\bar U_\theta$ is closed and has non-empty interior $U_\theta$ for all $\theta\leq\theta'$. Let $p$ denote the unique Lagrange multiplier associated with $(\vect{u},\rho)$ such that $(\vect{u},\rho,p)$ satisfy the first-order optimality conditions (\ref{FOC1})--(\ref{FOC3}). 
 
Consider the conforming finite element spaces $X_h \subset H^1(\Omega)^d$,  $C_{\gamma,h} \subset C_\gamma$,  and $M_h \subset L^2_0(\Omega)$ and suppose that the assumptions \ref{ass:gh}--\ref{ass:dense} hold. 

Then, there exists an $\bar h >0$ such that, for $\bar h \geq h \to 0 $, there is a sequence of solutions $(\vect{u}_{h}, \rho_{h}, p_{h}) \in X_{\vect{g}_h,{h}} \times C_{\gamma,{h}} \times M_{h}$  to the following discretized first-order optimality conditions
\begin{align}
a_{\rho_{h}}(\vect{u}_{h},\vect{v}_{h}) + b(\vect{v}_{h}, p_{h}) &= l_{\vect{f}}(\vect{v}_{h}) && \text{for all} \; \vect{v}_{h} \in X_{0,{h}}, \tag{FOC1$_h$} \label{FOC1h}\\
b(\vect{u}_{h},q_{h}) &=0 && \text{for all} \; q_{h} \in M_{h}, \tag{FOC2$_h$} \label{FOC2h}\\
c_{\vect{u}_{h}}(\rho_{h}, \eta_{h} - \rho_{h}) &\geq 0 && \text{for all} \; \eta_{h} \in C_{\gamma,{h}}, \tag{FOC3$_h$} \label{FOC3h}
\end{align}
such that $\vect{u}_{h} \to \vect{u}$ strongly in $H^1(\Omega)^d$, $\rho_{h} \to \rho$ strongly in $L^s(\Omega)$, $s \in [1,\infty)$,  and $p_{h} \to p$ strongly in $L^2(\Omega)$ as $h \to 0$.
\end{theorem}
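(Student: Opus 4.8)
The plan is to exploit the isolation of the minimizer to localize the problem to a closed ball on which $(\vect{u},\rho)$ is the unique minimizer, to solve a ball-constrained discrete minimization problem, and then to show that its minimizers converge to $(\vect{u},\rho)$ and, for $h$ small, satisfy the discrete optimality conditions. Fix $r>0$ so small that $(\vect{u},\rho)$ is the only minimizer of $J$ in the closed ball $\bar B_r := \{(\vect{v},\eta)\in H^1_{\vect{g},\mathrm{div}}(\Omega)^d\times C_\gamma : \|\vect{u}-\vect{v}\|_{H^1(\Omega)}+\|\rho-\eta\|_{L^s(\Omega)}\le r\}$; such an $r$ exists because the minimizer is isolated, hence strict. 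For each $h$ I would minimize $J$ over the discretely divergence-free, ball-constrained set $S_h := \{(\vect{v}_h,\eta_h)\in X_{\vect{g}_h,h}\times C_{\gamma,h}: b(\vect{v}_h,q_h)=0 \ \forall q_h\in M_h\}\cap \bar B_r$. To see that $S_h$ is nonempty for small $h$, I would use the density assumption \ref{ass:dense} together with the discrete inf-sup condition \ref{ass:discreteinfsup} (a Fortin-type correction) to build a discretely divergence-free $\vect{u}_h^\ast\in X_{\vect{g}_h,h}$ with $\vect{u}_h^\ast\to\vect{u}$ in $H^1(\Omega)^d$, and likewise $\rho_h^\ast\in C_{\gamma,h}$ with $\rho_h^\ast\to\rho$ in $L^s(\Omega)$; for $h$ small these lie in $\bar B_r$. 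Since $S_h$ is a closed and bounded subset of a finite-dimensional space and $J$ is continuous, a discrete minimizer $(\vect{u}_h,\rho_h)\in S_h$ exists.

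Next I would prove $(\vect{u}_h,\rho_h)\to(\vect{u},\rho)$. Boundedness in $\bar B_r$ gives, along a subsequence, $\vect{u}_h\weak\vect{u}^\ast$ in $H^1(\Omega)^d$ (hence $\vect{u}_h\to\vect{u}^\ast$ in $L^2(\Omega)^d$ by Rellich) and $\rho_h\weakstar\rho^\ast$ in $L^\infty(\Omega)$, so $\rho_h\weak\rho^\ast$ in $L^s(\Omega)$. The discrete divergence constraint passes to the limit by density of $M_h$, and \ref{ass:gh} fixes the trace, so $(\vect{u}^\ast,\rho^\ast)$ is feasible; since $\bar B_r$ is convex and closed the limit remains in $\bar B_r$. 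The minimizing property gives $J(\vect{u}_h,\rho_h)\le J(\vect{u}_h^\ast,\rho_h^\ast)\to J(\vect{u},\rho)$, so $\limsup_h J(\vect{u}_h,\rho_h)\le J(\vect{u},\rho)$, while the Borrvall--Petersson weak lower semicontinuity argument (strong $L^2$ convergence of $\vect{u}_h$ together with convexity of $\alpha$) gives $J(\vect{u}^\ast,\rho^\ast)\le\liminf_h J(\vect{u}_h,\rho_h)$. Hence $(\vect{u}^\ast,\rho^\ast)$ minimizes $J$ in $\bar B_r$, so by uniqueness $(\vect{u}^\ast,\rho^\ast)=(\vect{u},\rho)$ and $J(\vect{u}_h,\rho_h)\to J(\vect{u},\rho)$. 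As the limit is independent of the subsequence, the whole sequence converges.

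The strong convergence is the crux. From $J(\vect{u}_h,\rho_h)\to J(\vect{u},\rho)$ and $\int_\Omega\vect{f}\cdot\vect{u}_h\to\int_\Omega\vect{f}\cdot\vect{u}$, the quantity $\int_\Omega\alpha(\rho_h)|\vect{u}_h|^2 + \nu\|\nabla\vect{u}_h\|_{L^2(\Omega)}^2$ converges to its limit value; since each summand is weakly lower semicontinuous with the correct lower bound, both summands converge separately. Norm convergence plus weak convergence in the Hilbert space $H^1(\Omega)^d$ (adjusting for $\vect{g}_h\to\vect{g}$ via Poincar\'e) then yields $\vect{u}_h\to\vect{u}$ strongly in $H^1(\Omega)^d$. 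For the material distribution, replacing $|\vect{u}_h|^2$ by $|\vect{u}|^2$ (strong $L^1$ convergence) gives $\int_\Omega[\alpha(\rho_h)-\alpha(\rho)]|\vect{u}|^2\to0$; subtracting the weakly vanishing term $\int_\Omega\alpha'(\rho)|\vect{u}|^2(\rho_h-\rho)$ (here $s\ge 3/2$ ensures $\alpha'(\rho)|\vect{u}|^2\in L^{s'}(\Omega)$) and invoking strong convexity \ref{alpha5} through $\alpha(\rho_h)-\alpha(\rho)-\alpha'(\rho)(\rho_h-\rho)\ge\tfrac{1}{2}\alpha''_{\mathrm{min}}(\rho_h-\rho)^2$ shows $\int_\Omega(\rho_h-\rho)^2|\vect{u}|^2\to0$. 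On $U_\theta$, where $|\vect{u}|^2\ge\theta$, this gives strong $L^2$ convergence; on $\{\vect{u}=\vect{0}\}$, Proposition \ref{prop:rhosupport} gives $\rho=0$ and the volume balance $\int_\Omega\rho_h\to\int_\Omega\rho$ forces $\rho_h\to0$; letting $\theta\to0$ (the transition set $\{0<|\vect{u}|^2<\theta\}$ has vanishing measure) and using $0\le\rho_h\le1$ to interpolate upgrades this to $\rho_h\to\rho$ strongly in $L^s(\Omega)$ for every $s\in[1,\infty)$. I expect this decoupling of the nonlinear term $\alpha(\rho_h)|\vect{u}_h|^2$ under only weak convergence --- turning the energy gap into the quadratic control supplied by \ref{alpha5} --- to be the main obstacle.

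Finally, strong convergence to the center $(\vect{u},\rho)$ implies that for all sufficiently small $h\le\bar h$ the discrete minimizer lies in the open ball, so the ball constraint is inactive. Then the first-order arguments of Proposition \ref{prop:pressureexistence}, applied verbatim in the finite-dimensional setting --- Gateaux differentiation in admissible directions $\vect{v}_h\in X_{0,h}$ with $b(\vect{v}_h,\cdot)=0$ and along segments in the convex set $C_{\gamma,h}$ --- yield \eqref{FOC1h} restricted to the discrete kernel together with \eqref{FOC3h}, and the discrete inf-sup condition \ref{ass:discreteinfsup} recovers a unique $p_h\in M_h$ delivering \eqref{FOC1h} in full, while \eqref{FOC2h} holds by discrete divergence-freeness. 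Strong convergence of $p_h$ then follows from the standard inf-sup pressure estimate: subtracting \eqref{FOC1h} from (\ref{FOC1}), bounding $\|p_h-r_h\|_{L^2(\Omega)}$ by the residual through \ref{ass:discreteinfsup}, and controlling that residual by $\|\vect{u}_h-\vect{u}\|_{H^1(\Omega)}$, $\|\alpha(\rho_h)-\alpha(\rho)\|_{L^2(\Omega)}$ and the best-approximation error $\inf_{r_h\in M_h}\|p-r_h\|_{L^2(\Omega)}$ (assumption \ref{ass:dense}), all of which tend to zero.
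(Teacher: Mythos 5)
Your proposal is correct, and it shares the paper's overall scaffolding --- localization to a ball where the isolated minimizer is unique, existence of ball-constrained discrete minimizers by compactness, identification of weak limits via recovery sequences and weak lower semicontinuity, inactivity of the ball constraint for small $h$ yielding the discrete optimality conditions, and the inf-sup pressure estimate --- but your treatment of the crux, the upgrade to strong convergence, is genuinely different. The paper proves strong convergence of $\rho_h$ first (Proposition \ref{prop:FEMconvergence1.5}) by a Falk-type comparison of the continuous and discrete variational inequalities: summing (\ref{FOC3}) with its discrete counterpart and applying the mean value theorem to $\alpha'(\rho)-\alpha'(\rho_h)$ produces the term $\alpha''(\xi)|\vect{u}|^2(\rho-\rho_h)^2$, which \ref{alpha5} bounds below on $U_\theta$; it then proves strong $H^1$ convergence of $\vect{u}_h$ (Proposition \ref{prop:FEMconvergence2}) via a C\'ea-type coercivity estimate that needs the continuous pressure, the bound on $b(\cdot,p-q_h)$, and the already-established $L^4$ convergence of $\rho_h$. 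You instead extract everything from the energy convergence $J(\vect{u}_h,\rho_h)\to J(\vect{u},\rho)$: splitting off $\int_\Omega \vect{f}\cdot\vect{u}_h$, the two remaining summands are each weakly lower semicontinuous (convexity of $\alpha$ plus strong $L^2$ convergence of $\vect{u}_h$ for the first, weak gradient convergence for the second), so each converges to its limit separately; norm convergence of gradients plus Radon--Riesz in the Hilbert space gives strong $H^1$ convergence of $\vect{u}_h$ \emph{without any reference to the pressure or the discrete VI}, and the Bregman-distance inequality $\alpha(\rho_h)-\alpha(\rho)-\alpha'(\rho)(\rho_h-\rho)\geq\tfrac12\alpha''_{\mathrm{min}}(\rho_h-\rho)^2$, combined with $\int_\Omega\alpha'(\rho)|\vect{u}|^2(\rho_h-\rho)\to0$ from weak-* convergence, yields $\int_\Omega(\rho_h-\rho)^2|\vect{u}|^2\to0$ --- exactly the quantity the paper controls in (\ref{materialerror7}), reached here by convexity of $J$ in $\rho$ rather than by comparing optimality systems. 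Your route is shorter and decouples the velocity and material-distribution arguments, whereas the paper's VI/C\'ea route produces quantitative quasi-best-approximation inequalities, (\ref{materialerror9}) and (\ref{uhh1convergence1}), that could support convergence-rate analysis; your energy argument is purely qualitative. One small repair: on $\{\vect{u}=\vect{0}\}$ the \emph{global} balance $\int_\Omega\rho_h\to\int_\Omega\rho$ does not by itself force $\rho_h\to0$ there (mass could migrate between regions); what you need, and what you already have, is the localized statement $\int_{\{\vect{u}=\vect{0}\}}\rho_h\to\int_{\{\vect{u}=\vect{0}\}}\rho=0$ obtained by testing the weak(-*) convergence against the characteristic function of that set (using Proposition \ref{prop:rhosupport} for $\rho=0$), whereupon $0\leq\rho_h\leq1$ gives $\|\rho_h\|_{L^2}^2\leq\|\rho_h\|_{L^1}\to0$ --- this is precisely the mechanism behind Corollary \ref{cor:Omegab}, which the paper invokes at the same point.
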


In Proposition \ref{prop:FEMconvergence}, by fixing a ball around an isolated local minimizer, we show that finite element minimizers of a modified optimization problem converge weakly in $H^1(\Omega)^d \times L^2(\Omega)$ to the isolated minimizer  of the infinite-dimensional problem.   From this we deduce that there exists a subsequence of finite element solutions $(\vect{u}_h)$ that converges strongly to the isolated minimizer  of the infinite-dimensional problem  in $L^2(\Omega)^d$. We then strengthen the convergence of $\rho_h$ to strong convergence in $L^s(\Omega)$, $s \in [1,\infty)$, in Proposition \ref{prop:FEMconvergence1.5} and strengthen the convergence of $\vect{u}_h$ to strong convergence in $H^1(\Omega)^d$ in Proposition \ref{prop:FEMconvergence2}. In Proposition \ref{prop:FEMconvergence3}, we prove that there exists an $\bar h >0$ such that there is a subsequence, $\bar h > h\to 0 $, of strongly converging finite element solutions that also satisfy discretized first-order optimality conditions. Finally, in Proposition \ref{prop:FEMconvergence4}, we show that the Lagrange multiplier, $p_h \in M_h$, that satisfies the discretized first-order optimality conditions, converges strongly in $L^2(\Omega)$ to the Lagrange multiplier  for the infinite-dimensional problem. 

We now fix an isolated minimizer $(\vect{u},\rho)$ of (\ref{borrvallmin}). We define the radius of the basin of attraction as the largest value $r$ such that $(\vect{u},\rho)$ is the unique local minimizer in $B_{r, H^1(\Omega) \times L^2(\Omega)}(\vect{u},\rho) \cap (H^1_{\vect{g},\mathrm{div}}(\Omega)^d \times C_{\gamma})$, where
\begin{align}
\begin{split}
&B_{r, H^1(\Omega) \times L^2(\Omega)}(\vect{u},\rho) \\
&:= \{\vect{v} \in H^1(\Omega)^d, \; \eta \in C_\gamma: \|\vect{u} - \vect{v}\|_{H^1(\Omega)} + \|\rho - \eta\|_{L^2(\Omega)}\leq r \}.
\end{split}
\end{align}
We also define $B_{r,H^1(\Omega)}(\vect{u})$ and $B_{r,L^2(\Omega)}(\rho)$ by
\begin{align}
B_{r, H^1(\Omega)}(\vect{u}) &:= \{\vect{v} \in H^1(\Omega)^d : \|\vect{u} - \vect{v}\|_{H^1(\Omega)} \leq r \},\\
B_{r,L^2(\Omega)}(\rho)& := \{ \eta \in C_\gamma:\|\rho - \eta\|_{L^2(\Omega)} \leq r \}.
\end{align}
We note that 
\begin{align*}
(H^1_{\vect{g},\mathrm{div}}(\Omega)^d \cap B_{r/2,H^1(\Omega)}(\vect{u})) &\times (C_{\gamma}\cap B_{r/2,L^2(\Omega)}(\rho)) \\
&\indent \subset B_{r, H^1(\Omega) \times L^2(\Omega)}(\vect{u},\rho) \cap (H^1_{\vect{g},\mathrm{div}}(\Omega)^d \times C_{\gamma} )
\end{align*}
and hence  $(\vect{u},\rho)$ is also the unique minimizer in $(H^1_{\vect{g},\mathrm{div}}(\Omega)^d \cap B_{r/2,H^1(\Omega)}(\vect{u})) \times (C_{\gamma}  \cap B_{r/2,L^2(\Omega)}(\rho))$. 

Moreover, we define the spaces $V_{\vect{g}_h,h}$ and $V_{0,h}$ by
\begin{align*}
V_{\vect{g}_h,h} &:= \{ \vect{v}_h \in X_{\vect{g}_h,h} : b(\vect{v}_h,q_h) = 0 \; \text{for all}\; q_h \in M_h \},\\
V_{0,h} &:= \{ \vect{v}_h \in X_{0,h} : b(\vect{v}_h,q_h) = 0 \; \text{for all}\; q_h \in M_h \}.
\end{align*}

\begin{remark}
\label{rem:conf:isolation1}
In the context of the material distribution, it might be a more natural choice to assume that $\rho$ is isolated with respect to the $L^\infty$-norm. Assuming $\rho$ is isolated with respect to the $L^2$-norm is a stronger isolation assumption, as it cannot be guaranteed that if $\eta \in C_\gamma$ lives in an isolated neighborhood with respect to the $L^2$-norm, $\eta \in B_{r,L^2(\Omega)}(\rho)$, then, there exists an $r_* > 0$, such that $\eta \in B_{r_*,L^\infty(\Omega)}(\rho)$ where $B_{r_*,L^\infty(\Omega)}(\rho)$ is an isolated neighborhood with respect to the $L^\infty$-norm. We make this stronger isolation assumption as simple and continuous functions are not dense in $L^\infty(\Omega)$, but are dense in $L^2(\Omega)$. This has implications in the assumption \ref{ass:dense} and, subsequently, in the remaining results. However, as far as we are aware, the $L^2$-isolation assumption is valid for all practical problems found in the literature, in particular it holds for both examples found in Section \ref{sec:numerical}.
\end{remark}
\begin{remark}
\label{rem:conf:isolation2}
We note that balls centred at $\rho \in C_\gamma$ are equivalent if measured against any $L^s$-norm for $s \in [1,\infty)$. More precisely, for any $s, q \in [1,\infty)$, if $\eta \in C_\gamma \cap B_{r,L^q(\Omega)}(\rho)$, there exists an $r_* > 0$, depending on $s \in [1,\infty)$, such that $\eta \in C_\gamma \cap B_{r_*,L^s(\Omega)}(\rho)$. Hence, the assumption that the material distribution is isolated with respect to $L^2$-norm is not a stronger assumption than being isolated with respect to the $L^s$-norm provided $s \in [1,\infty)$. 
\end{remark}

In Propositions \ref{prop:FEMconvergence}--\ref{prop:FEMconvergence3} and Corollary \ref{cor:Omegab},   we fix an isolated minimizer $(\vect{u},\rho)$ of (\ref{borrvallmin}) and suppose that the conditions of Theorem \ref{th:FEMexistence} hold.

\begin{prop}[Weak convergence of ($\vect{u}_h, \rho_h)$ in $H^1(\Omega)^d \times L^2(\Omega)$]
\label{prop:FEMconvergence}
Consider the finite-dimensional optimization problem: find $(\vect{u}_h,\rho_h)$ that minimizes
\begin{align}
\min_{(\vect{v}_h,\eta_h) \in (V_{\vect{g}_h,h} \cap B_{r/2,H^1(\Omega)}(\vect{u})) \times (C_{\gamma, h} \cap B_{r/2,L^2(\Omega)}(\rho))} J(\vect{v}_h, \eta_h). \tag{BP$_{h}$} \label{BPh}
\end{align}
Then, a minimizer $(\vect{u}_h,\rho_h)$ of (\ref{BPh}) exists and there exist subsequences (up to relabeling) such that
\begin{align}
\vect{u}_h  &\weak \vect{u} \; \text{weakly in} \; H^1(\Omega)^d,\\
\vect{u}_h &\to \vect{u} \; \text{strongly in} \; L^2(\Omega)^d,\\
\rho_h  &\weakstar \rho \; \text{weakly-* in} \; L^\infty(\Omega),\\
\rho_h  &\weak \rho \; \text{weakly in} \; L^s(\Omega),\; s \in [1,\infty).
\end{align}
\end{prop}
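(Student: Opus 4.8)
**The plan is to prove this in two stages: first establish existence of a minimizer of the finite-dimensional problem (BP$_h$), then extract convergent subsequences using compactness.**

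For existence, I would argue by the direct method of the calculus of variations over the feasible set $(V_{\vect{g}_h,h} \cap B_{r/2,H^1(\Omega)}(\vect{u})) \times (C_{\gamma,h} \cap B_{r/2,L^2(\Omega)}(\rho))$. This set is nonempty for $h$ small enough (by density assumption \ref{ass:dense}, one can approximate $(\vect{u},\rho)$ itself), closed, bounded, and convex in the finite-dimensional space $X_h \times C_{\gamma,h}$, hence compact. Since $J$ is continuous on this space (the only delicate term is $\int_\Omega \alpha(\rho_h)|\vect{u}_h|^2\,\dx$, but $\alpha$ is bounded and continuous by \ref{alpha1}--\ref{alpha3} and we are in finite dimensions), a minimizer $(\vect{u}_h,\rho_h)$ exists by the extreme value theorem.

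For the convergence statements, the key observation is that the minimizers are \emph{uniformly bounded} independently of $h$: by construction $\|\vect{u}_h - \vect{u}\|_{H^1(\Omega)} \leq r/2$ forces $(\vect{u}_h)$ to be bounded in $H^1(\Omega)^d$, and $0 \leq \rho_h \leq 1$ a.e.\ gives a uniform $L^\infty(\Omega)$ bound on $(\rho_h)$. The plan is then to apply standard compactness results: boundedness in the reflexive space $H^1(\Omega)^d$ yields a weakly convergent subsequence $\vect{u}_h \weak \vect{u}_*$; the compact Rellich--Kondrachov embedding $H^1(\Omega) \hookrightarrow\hookrightarrow L^2(\Omega)$ upgrades this to strong convergence in $L^2(\Omega)^d$ along a further subsequence. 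Boundedness in $L^\infty(\Omega)$ gives a weak-* convergent subsequence $\rho_h \weakstar \rho_*$, and since $\Omega$ has finite measure, weak-* convergence in $L^\infty$ implies weak convergence in every $L^s(\Omega)$, $s \in [1,\infty)$.

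\textbf{The main obstacle is identifying the limits $\vect{u}_*$ and $\rho_*$ with the prescribed minimizer $\vect{u}$ and $\rho$} rather than merely with \emph{some} feasible pair. This is where the isolation hypothesis and the radius-$r/2$ construction must do the work. The strategy is to show that the limit $(\vect{u}_*,\rho_*)$ is itself a minimizer of (\ref{borrvallmin}) restricted to $(H^1_{\vect{g},\mathrm{div}}(\Omega)^d \cap B_{r/2,H^1(\Omega)}(\vect{u})) \times (C_\gamma \cap B_{r/2,L^2(\Omega)}(\rho))$: feasibility of the limit follows from weak closedness of the constraint balls and of the divergence-free and box constraints (the set $C_\gamma$ is convex and closed, hence weakly closed, and the volume constraint passes to the limit), while minimality follows from weak lower semicontinuity of $J$ combined with the density assumption \ref{ass:dense} to construct recovery sequences showing $\liminf J(\vect{u}_h,\rho_h) \leq J(\vect{u},\rho)$. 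The delicate point in lower semicontinuity is the nonconvex cross term $\int_\Omega \alpha(\rho_h)|\vect{u}_h|^2\,\dx$; here the \emph{strong} $L^2$ convergence of $\vect{u}_h$ is essential, since combined with weak-* convergence of $\rho_h$ (and continuity of $\alpha$) it lets one pass to the limit in this product term. Once the limit is shown to be a minimizer lying in the ball of radius $r/2$, the fact that $(\vect{u},\rho)$ is the \emph{unique} minimizer there (as established in the discussion preceding this proposition) forces $(\vect{u}_*,\rho_*) = (\vect{u},\rho)$. Because the limit is thereby uniquely determined, a standard subsequence-of-subsequences argument promotes all four convergences to hold along the (relabeled) subsequence.
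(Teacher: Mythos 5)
Your overall route is the same as the paper's (direct method in finite dimensions; weak compactness of the closed convex balls; a recovery pair to force $\lim_h J(\vect{u}_h,\rho_h)\le J(\vect{u},\rho)$; weak lower semicontinuity; identification via uniqueness of the minimizer in the $r/2$-ball; then Rellich--Kondrachov and weak-* compactness), but two of your steps would fail as written. First, the cross term: your claim that weak-* convergence of $\rho_h$ together with \emph{continuity} of $\alpha$ ``lets one pass to the limit'' in $\int_\Omega \alpha(\rho_h)|\vect{u}_h|^2\,\dx$ is false, because composition with a nonlinear function does not commute with weak-* limits: if $\rho_h$ oscillates between $0$ and $1$ with $\rho_h \weakstar 1/2$, then $\alpha(\rho_h) \weakstar (\alpha(0)+\alpha(1))/2$, which is strictly larger than $\alpha(1/2)$ unless $\alpha$ is affine. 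What rescues the argument is convexity \ref{alpha2}: after using the strong $L^2$ convergence of $\vect{u}_h$ to replace $|\vect{u}_h|^2$ by $|\vect{u}|^2$ up to an error controlled by $\bar\alpha\,\||\vect{u}_h|^2-|\vect{u}|^2\|_{L^1(\Omega)}\to 0$, the map $\eta \mapsto \int_\Omega \alpha(\eta)|\vect{u}|^2\,\dx$ is convex and strongly continuous on $C_\gamma \subset L^2(\Omega)$, hence weakly sequentially lower semicontinuous; only the one-sided $\liminf$ inequality is available, and that is exactly what the identification needs. The paper packages this as the weak lower semicontinuity of $J$ on $H^1(\Omega)^d \times L^2(\Omega)$, via a modification of the proof of \cite[Th.~3.1]{Borrvall2003}, where convexity of $\alpha$ is the essential ingredient.

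Second, your recovery sequence: the competitor must be feasible for (\ref{BPh}), so its velocity part must lie in $V_{\vect{g}_h,h}$, i.e.\ be \emph{discretely} divergence-free with boundary data $\vect{g}_h$, and the density assumption \ref{ass:dense} (density of $X_h$ in $H^1(\Omega)^d$) alone does not supply such approximants of the exactly divergence-free $\vect{u}$; one needs the inf-sup condition \ref{ass:discreteinfsup} together with \ref{ass:gh} (a Fortin-type argument), which the paper imports as \cite[Lemma 3.1]{Borrvall2003}. Relatedly, feasibility of the weak limit is not a matter of ``weak closedness of the divergence-free constraint,'' since the $\vect{u}_h$ satisfy only $b(\vect{u}_h,q_h)=0$ for $q_h \in M_h$; the paper instead takes, for each $q \in L^2_0(\Omega)$, a sequence $\tilde q_h \to q$ strongly in $L^2(\Omega)$ (again \ref{ass:dense}) and splits $b(\vect{u}_h,q)=b(\vect{u}_h,\tilde q_h)+b(\vect{u}_h,q-\tilde q_h)$ to conclude $b(\hat{\vect{u}},q)=0$, with the boundary trace passing to the limit by \ref{ass:gh}. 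With these two repairs — convexity in place of continuity for the lower semicontinuity, and the mixed-FEM approximation lemma in place of bare density — your argument coincides with the paper's proof.
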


\begin{proof}
The functional $J$ is continuous and $(V_{\vect{g}_h,h} \cap B_{r/2,H^1(\Omega)}(\vect{u})) \times (C_{\gamma, h} \cap B_{r/2,L^2(\Omega)}(\rho))$ is a finite-dimensional, closed and bounded set.  Moreover, for sufficiently small $h$ it is non-empty.  Therefore, it is sequentially compact by the Heine--Borel theorem. Hence $J$ obtains its infimum in $(V_{\vect{g}_h,h} \cap B_{r/2,H^1(\Omega)}(\vect{u})) \times (C_{\gamma, h} \cap B_{r/2,L^2(\Omega)}(\rho))$ and therefore, a minimizer $(\vect{u}_h, \rho_h)$ exists. 

By a corollary of Kakutani's Theorem, if a Banach space is reflexive then every norm-closed, bounded and convex subset of the Banach space is weakly compact and thus, by the Eberlein--\v{S}mulian theorem, sequentially weakly compact. It can be checked that $H^1(\Omega)^d \cap B_{r/2,H^1(\Omega)}(\vect{u})$ and $C_\gamma \cap B_{r/2,L^2(\Omega)}(\rho)$ are norm-closed, bounded and convex subsets of the reflexive Banach spaces $H^1(\Omega)^d$ and $L^2(\Omega)$, respectively. Therefore,  $H^1(\Omega)^d \cap B_{r/2,H^1(\Omega)}(\vect{u})$ is weakly sequentially compact in $H^1(\Omega)^d$  and $C_\gamma \cap B_{r/2,L^2(\Omega)}(\rho)$ is weakly sequentially compact in $L^2(\Omega)$. 

Hence we can extract subsequences, $(\vect{u}_h)$ and $(\rho_h)$ of the sequence generated by the global minimizers of (\ref{BPh}) such that
\begin{align}
\vect{u}_h &\weak \hat{\vect{u}} \in H^1(\Omega)^d \cap B_{r/2,H^1(\Omega)}(\vect{u}) \; \text{weakly in} \; H^1(\Omega)^d,\\
\rho_h &\weak \hat{\rho} \in C_\gamma \cap B_{r/2,L^2(\Omega)}(\rho) \; \text{weakly in} \; L^2(\Omega).
\end{align}

By assumption \ref{ass:dense}, there exists a sequence of finite element functions $\tilde{\rho}_h \in C_{\gamma,h}$ that strongly converges to  $\rho$ in $L^2(\Omega)$. Moreover let $\vect{\tilde{u}}_h \in V_{\vect{g}_h,h}$ be a finite element function taken from the sequence of finite element functions that satisfy $\vect{\tilde{u}}_h \to \vect{u}$ strongly in $H^1(\Omega)^d$. Such a sequence is shown to exist in \cite[Lemma 3.1]{Borrvall2003}.

We now wish to identify the limits $\hat{\vect{u}}$ and $\hat{\rho}$. Consider the following bound:
\begin{align}
\label{femconvergence1}
\begin{split}
&|J(\vect{\tilde{u}}_h, \tilde{\rho}_h) - J(\vect{u},\rho)|\\
&\indent \leq \int_\Omega |(\alpha(\rho)-\alpha(\tilde{\rho}_h)) |\vect{u}|^2 | + | \alpha(\tilde{\rho}_h) (|\vect{u}|^2 - |\vect{\tilde{u}}_h|^2)| \; \dx\\
&\indent \indent + \int_\Omega \nu \left||\nabla \vect{u} |^2-|\nabla \vect{\tilde{u}}_h|^2 |  \right| + 2 |\vect{f} \cdot (\vect{u} - \vect{\tilde{u}}_h)| \; \dx\\
&\indent \leq L_\alpha  \|\vect{u}\|^2_{L^4(\Omega)}\| \tilde{\rho}_h - \rho\|_{L^2(\Omega)} \\
&\indent \indent+ \bar{\alpha} \|\vect{\tilde{u}}_h -\vect{u}\|_{L^2(\Omega)}( \|\vect{\tilde{u}}_h - \vect{u}\|_{L^2(\Omega)} + 2\|\vect{u}\|_{L^2(\Omega)})\\
& \indent \indent +\nu \|\vect{\tilde{u}}_h - \vect{u} \|_{H^1(\Omega)}( \|\vect{\tilde{u}}_h - \vect{u}\|_{H^1(\Omega)} + 2\|\vect{u}\|_{H^1(\Omega)}) \\
&\indent\indent+ 2 \|\vect{f}\|_{L^2(\Omega)}\|\vect{\tilde{u}}_h - \vect{u}\|_{L^2(\Omega)},
\end{split}
\end{align}
where $L_\alpha$ denotes the Lipschitz constant for $\alpha$. From (\ref{femconvergence1}) we see that 
\begin{align*}
J(\vect{\tilde{u}}_h, \tilde{\rho}_h) \to J(\vect{u},\rho) \;\; \text{as} \;\; h \to 0. 
\end{align*}
Furthermore, for sufficiently small $h >0$ we note that 
\begin{align*}
(\vect{\tilde{u}}_h, \tilde{\rho}_h) \in (V_{\vect{g}_h,h} \cap B_{r/2,H^1(\Omega)}(\vect{u})) \times (C_{\gamma,h} \cap B_{r/2,L^2(\Omega)}(\rho)).
\end{align*}
Therefore,
\begin{align}
J(\vect{u}_h,\rho_h) \leq J(\vect{\tilde{u}}_h, \tilde{\rho}_h).
\end{align}
By taking the limit as $h \to 0$ and utilizing the strong convergence of $\vect{\tilde{u}}_h$ and $\tilde{\rho}_h$ to $\vect{u}$ and $\rho$, respectively, we see that
\begin{align}
\lim_{h \to 0} J(\vect{u}_h, \rho_h) \leq J(\vect{u}, \rho). \label{eq:femapprox1}
\end{align}
\ref{ass:gh} implies that 
\begin{align}
\vect{u}_h|_{\partial \Omega} = \vect{g}_h \to \vect{g} \;\; \text{strongly in} \;\; H^{1/2}(\partial \Omega)^d.  \label{eq:bcscon}
\end{align}
By assumption \ref{ass:dense}, for every $q \in L^2_0(\Omega)$, there exists a sequence of $\tilde{q}_h \in M_h$ such that $\tilde{q}_h \to q$ strongly in $L^2(\Omega)$. Since $\vect{u}_h \weak \hat{\vect{u}}$ weakly in $H^1(\Omega)^d$ and $\vect{u}_h \in V_{\vect{g}_h,h}$, we see that
\begin{align}
b(\hat{\vect{u}}, q) = \lim_{h \to 0} b(\vect{u}_h, \tilde{q}_h) +  \lim_{h \to 0} b(\vect{u}_h, q - \tilde{q}_h)= 0 \;\; \text{for all} \;\; q \in L^2_0(\Omega).
\end{align}
Hence $\hat{\vect{u}}$ is pointwise divergence-free and together with (\ref{eq:bcscon}), we deduce that $\hat{\vect{u}} \in  H^1_{\vect{g},\mathrm{div}}(\Omega)^d \cap B_{r/2,H^1(\Omega)}(\vect{u})$.  By construction $\hat{\rho} \in 
 C_\gamma \cap B_{r/2,L^2(\Omega)}(\rho)$. 
 
With a small modification to the proof found in \cite[Th.~3.1]{Borrvall2003}, we note that $J$ is weakly lower semicontinuous on $H^1(\Omega)^d \times L^2(\Omega)$. Therefore,
\begin{align}
J(\vect{\hat{u}},\hat{\rho}) \leq \liminf_{h \to 0} J(\vect{u}_h, \rho_h). \label{lowersemi}
\end{align}
We note that $(\vect{u},\rho)$ is the unique minimizer  of $ (H^1_{\vect{g},\mathrm{div}}(\Omega)^d \cap B_{r/2,H^1(\Omega)}(\vect{u})) \times (C_\gamma \cap B_{r/2,L^2(\Omega)}(\rho))$, which implies that $J(\vect{u},\rho) \leq J(\vect{\hat{u}},\hat{\rho})$. Hence, from (\ref{eq:femapprox1}) and (\ref{lowersemi}), it follows that
\begin{align}
J(\vect{\hat{u}},\hat{\rho}) = J(\vect{u}, \rho).
\end{align} 

Since $(\vect{u}, \rho)$ is the unique minimizer in the spaces we consider, we can identify the limits $\hat{\vect{u}}$ and $\hat{\rho}$ as $\vect{u}$ and $\rho$, respectively, and state that $\vect{u}_h \weak \vect{u}$ weakly in $H^1(\Omega)^d$ and $\rho_h \weak \rho$ weakly in $L^2(\Omega)$. By the Rellich--Kondrachov theorem, we can extract a further subsequence such that $\vect{u}_h \to \vect{u}$ strongly in $L^2(\Omega)^d$.

We note that by the Banach--Alaoglu theorem,  the closed unit ball of the dual space of a normed vector space, (for example $L^1(\Omega)$), is compact in the weak-* topology. Hence we can also find a subsequence such that $\rho_h \weakstar \hat{\rho} \in C_\gamma \cap \{\eta: \|\rho-\eta\|_{L^\infty(\Omega)} \leq r/2\}$ weakly-* in $L^\infty(\Omega)$. By the uniqueness of the weak limit, we can identify $\hat{\rho} = \rho$ a.e.~in $\Omega$ and, thus, we deduce that $\rho_h \weakstar \rho$ weakly-* in $L^\infty(\Omega)$,  i.e.~$\int_\Omega \rho_h \eta \, \mathrm{d}x \to \int_\Omega \rho \eta \, \mathrm{d}x$ for any $\eta \in L^1(\Omega)$. Since $L^s(\Omega) \subset L^1(\Omega)$, for any $s > 1$, we note that $\int_\Omega \rho_h \eta \, \mathrm{d}x \to \int_\Omega \rho \eta \, \mathrm{d}x$ for any $\eta \in L^s(\Omega)$, $s \geq 1$.  Hence, by definition,  $\rho_h \weak \rho$ weakly in $L^s(\Omega)$ for all $s \in [1,\infty)$. 
\end{proof}

\begin{corollary}[Strong convergence of $\rho_h$ in $L^s(\Omega_b)$ ]
\label{cor:Omegab}
Let $\Omega_b$ be any measurable subset of $\Omega$ of positive measure on which $\rho$ is equal to zero or one a.e.~(if such a set exists). Then, there exists a sequence of finite element minimizers, $\rho_h$, of (\ref{BPh}) that converge strongly in $L^s(\Omega_b)$ to the isolated local minimizer of (\ref{borrvallmin}), where $s \in [1,\infty)$.
\end{corollary}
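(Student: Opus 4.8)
The plan is to exploit the pointwise bounds $0 \leq \rho_h \leq 1$ a.e.\ together with the weak-* convergence $\rho_h \weakstar \rho$ in $L^\infty(\Omega)$ already established in Proposition \ref{prop:FEMconvergence}. The guiding principle is that weak convergence upgrades to strong convergence precisely where the limit is $\{0,1\}$-valued, so the whole argument is a soft consequence of the preceding proposition and no new compactness or extraction is required: throughout, $(\rho_h)$ denotes the relabeled subsequence of minimizers of (\ref{BPh}) produced there.

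First I would decompose $\Omega_b = \Omega_0 \cup \Omega_1$ (up to a null set), where $\Omega_0 := \{x \in \Omega_b : \rho(x) = 0\}$ and $\Omega_1 := \{x \in \Omega_b : \rho(x) = 1\}$; both sets are measurable by hypothesis. The key observation is that, since $0 \leq \rho_h \leq 1$ a.e.\ and $\rho \in \{0,1\}$ a.e.\ on $\Omega_b$, we have $0 \leq |\rho_h - \rho| \leq 1$ a.e.\ on $\Omega_b$, whence
\begin{align*}
|\rho_h - \rho|^s \leq |\rho_h - \rho| \quad \text{a.e.\ on} \;\; \Omega_b, \;\; \text{for every} \;\; s \in [1,\infty).
\end{align*}
Consequently it suffices to prove strong convergence in $L^1(\Omega_b)$, i.e.\ that $\int_{\Omega_b} |\rho_h - \rho| \; \dx \to 0$, and the general $L^s$ statement follows immediately.

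Next I would remove the absolute value by tracking the sign of $\rho_h - \rho$ on each piece: on $\Omega_0$ we have $\rho_h - \rho = \rho_h \geq 0$, while on $\Omega_1$ we have $\rho_h - \rho = \rho_h - 1 \leq 0$. Introducing the bounded test function $\psi := \mathbf{1}_{\Omega_0} - \mathbf{1}_{\Omega_1} \in L^\infty(\Omega) \subset L^1(\Omega)$, this yields the identity
\begin{align*}
\int_{\Omega_b} |\rho_h - \rho| \; \dx = \int_\Omega (\rho_h - \rho)\, \psi \; \dx.
\end{align*}
The right-hand side tends to zero as $h \to 0$ by the weak-* convergence $\rho_h \weakstar \rho$ in $L^\infty(\Omega)$, tested against $\psi \in L^1(\Omega)$. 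Combining this with the pointwise bound above gives $\int_{\Omega_b} |\rho_h - \rho|^s \; \dx \to 0$, which is exactly the claimed strong convergence in $L^s(\Omega_b)$ for all $s \in [1,\infty)$.

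I do not anticipate a serious analytical obstacle; the only point demanding a little care is the justification that $\psi$ is an admissible test function, which is immediate because $\psi \in L^\infty(\Omega) \subset L^1(\Omega)$ and the convergence invoked is weak-* in $L^\infty(\Omega)$, i.e.\ the pairing is precisely against $L^1(\Omega)$. (Alternatively one may use the weak convergence $\rho_h \weak \rho$ in $L^s(\Omega)$, since $\psi$ lies in every $L^{s'}(\Omega)$ on the bounded domain $\Omega$.) This argument is the refinement, within the fixed ball around the isolated minimizer, of the strong $L^s(\Omega_b)$ convergence observed by Borrvall and Petersson, and it requires no hypothesis beyond those of Theorem \ref{th:FEMexistence}.
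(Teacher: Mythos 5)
Your proof is correct and takes essentially the same route as the paper: the paper deduces the corollary from the weak-* convergence $\rho_h \weakstar \rho$ established in Proposition \ref{prop:FEMconvergence} by citing Corollary 3.2 of Petersson \cite{Petersson1999}, whose proof is precisely your argument---split $\Omega_b$ into the sets where $\rho = 0$ and $\rho = 1$, use the box constraints $0 \leq \rho_h \leq 1$ to remove the absolute value and test the weak-* convergence against the resulting $L^1$ function, then upgrade from $L^1(\Omega_b)$ to $L^s(\Omega_b)$ via $|\rho_h - \rho| \leq 1$. You have simply written out in full the step the paper outsources to the reference.
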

\begin{proof}
We have shown that there exists a sequence of finite element minimizers $(\vect{u}_h, \rho_h)$ of (\ref{BPh}) that converge to the isolated local minimizer $(\vect{u}, \rho)$. In particular $\rho_h \weakstar \rho$ weakly-* in $L^\infty(\Omega)$ and $\rho_h \weak \rho$ weakly in $L^s(\Omega)$ for all $s \in [1,\infty)$. The result is then deduced by following the proof of Corollary 3.2 in \cite{Petersson1999}. 
\end{proof}

\begin{prop}[Strong convergence of $\rho_h$ in $L^s(\Omega)$, $s\in[1,\infty)$]
\label{prop:FEMconvergence1.5}
There exists a subsequence of minimizers, $(\rho_h)$, of (\ref{BPh}) such that
\begin{align}
\rho_h \to \rho \; \text{strongly in} \; L^s(\Omega), \;\; s \in [1,\infty).
\end{align}
\end{prop}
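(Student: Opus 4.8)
The plan is to first establish strong convergence $\rho_h \to \rho$ in $L^2(\Omega)$ and then upgrade to every $L^s(\Omega)$, $s\in[1,\infty)$, using the uniform bound $0\le\rho_h,\rho\le1$. The engine of the argument is a \emph{weighted} strong-convexity estimate. Since $\alpha\in C^2([0,1])$ satisfies \ref{alpha5}, for any $\eta\in C_\gamma$ we have the pointwise inequality $\alpha(\eta)\ge\alpha(\rho)+\alpha'(\rho)(\eta-\rho)+\tfrac{\alpha''_{\mathrm{min}}}{2}(\eta-\rho)^2$. Multiplying by $\tfrac12|\vect u|^2\ge0$, integrating over $\Omega$, and discarding the nonnegative linear term using the variational inequality \ref{FOC3} (which asserts exactly that $\tfrac12\int_\Omega\alpha'(\rho)|\vect u|^2(\eta-\rho)\dx\ge0$), I obtain, after the $\nu$- and $\vect f$-contributions cancel,
\[
\frac{\alpha''_{\mathrm{min}}}{4}\int_\Omega |\vect u|^2(\eta-\rho)^2\,\dx \le J(\vect u,\eta)-J(\vect u,\rho)\quad\text{for all }\eta\in C_\gamma.
\]

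First I would set $\eta=\rho_h$ (admissible since $C_{\gamma,h}\subset C_\gamma$) and show that the right-hand side tends to zero. A lower bound $J(\vect u,\rho_h)-J(\vect u,\rho)\ge0$ is immediate because $(\vect u,\rho_h)$ lies in the continuous admissible set $(H^1_{\vect g,\mathrm{div}}(\Omega)^d\cap B_{r/2,H^1(\Omega)}(\vect u))\times(C_\gamma\cap B_{r/2,L^2(\Omega)}(\rho))$, of which $(\vect u,\rho)$ is the unique minimizer. For the matching upper bound I would write $J(\vect u,\rho_h)-J(\vect u,\rho)=[J(\vect u,\rho_h)-J(\vect u_h,\rho_h)]+[J(\vect u_h,\rho_h)-J(\vect u,\rho)]$. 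The second bracket vanishes in the limit because $J(\vect u_h,\rho_h)\to J(\vect u,\rho)$ was established in the proof of Proposition \ref{prop:FEMconvergence}. In the first bracket, the terms involving $\alpha(\rho_h)(|\vect u|^2-|\vect u_h|^2)$ and $\vect f\cdot(\vect u-\vect u_h)$ vanish by the strong $L^2(\Omega)^d$ convergence $\vect u_h\to\vect u$ (and $|\alpha|\le\bar\alpha$), while the viscous term $\tfrac\nu2\int_\Omega(|\nabla\vect u|^2-|\nabla\vect u_h|^2)\dx$ has nonpositive limit superior by weak lower semicontinuity of $\vect v\mapsto\|\nabla\vect v\|_{L^2(\Omega)}^2$ together with $\vect u_h\weak\vect u$ in $H^1(\Omega)^d$. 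Hence $\limsup_h[J(\vect u,\rho_h)-J(\vect u,\rho)]\le0$, which with the lower bound forces the difference to zero. Consequently $\int_\Omega|\vect u|^2(\rho_h-\rho)^2\dx\to0$, and since $|\vect u|^2\ge\theta$ on $U_\theta$ this yields $\rho_h\to\rho$ strongly in $L^2(U_\theta)$ for every admissible $\theta>0$.

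It remains to control the region where the weight $|\vect u|^2$ degenerates. By the same decrement argument used to prove Proposition \ref{prop:rhosupport}, applied to any positive-measure subset of $\{\vect u=\vect 0\}$ (on which perturbing $\rho$ leaves $J$ unchanged), strictness of the minimizer forces $\rho=0$ a.e.\ on $\{\vect u=\vect0\}$; in particular this set is contained in the set $\Omega_b$ of Corollary \ref{cor:Omegab}. On it I would either invoke Corollary \ref{cor:Omegab} directly, or argue that $\int_{\{\vect u=\vect0\}}\rho_h\dx\to\int_{\{\vect u=\vect0\}}\rho\dx=0$ by weak convergence, whence $\rho_h\to0$ strongly in $L^2(\{\vect u=\vect0\})$ because $\rho_h\ge0$ is uniformly bounded. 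Finally, the transition layer $\{0<|\vect u|^2<\theta\}$ has Lebesgue measure tending to $0$ as $\theta\to0$ (continuity from above), so its contribution is at most $|\{0<|\vect u|^2<\theta\}|$ since $(\rho_h-\rho)^2\le1$. Splitting $\Omega=U_\theta\cup\{0<|\vect u|^2<\theta\}\cup\{\vect u=\vect0\}$, sending $h\to0$ and then $\theta\to0$ gives $\rho_h\to\rho$ in $L^2(\Omega)$; the upgrade to $L^s(\Omega)$, $s\in[1,\infty)$, follows from $\|\rho_h-\rho\|_{L^s(\Omega)}^s\le\|\rho_h-\rho\|_{L^2(\Omega)}^2$ for $s\ge2$ (as $|\rho_h-\rho|\le1$) and from the finiteness of $|\Omega|$ for $s\le2$.

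The main obstacle I anticipate is precisely the upper bound $J(\vect u,\rho_h)-J(\vect u,\rho)\to0$: because strong $H^1(\Omega)^d$ convergence of $\vect u_h$ is not yet available (it is only established later, in Proposition \ref{prop:FEMconvergence2}), the viscous term cannot be passed to the limit by continuity. The resolution is that only its limit superior needs to be nonpositive, which weak lower semicontinuity supplies, and the opposing minimality inequality then pins the energy difference to zero.
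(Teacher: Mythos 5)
Your proof is correct, but it takes a genuinely different route from the paper's. The paper works at first order: it pairs the continuous variational inequality (\ref{FOC3}) with the discrete variational inequality satisfied by $\rho_h$ (derived from minimality of $(\vect{u}_h,\rho_h)$ for (\ref{BPh})), sums the two, and applies the mean value theorem to $\alpha'$ so that \ref{alpha5} produces the coercive term $\alpha''_{\mathrm{min}}\theta\|\rho-\rho_h\|^2_{L^2(U_\theta)}$; the price is a right-hand side that must be estimated with Sobolev embeddings ($L^4$, $L^{q'}$ bounds on $\vect{u}$, $\vect{u}_h$) and a dense approximating sequence $\eta_h=\tilde\rho_h\to\rho$. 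You instead work at second order, through the energy: the quadratic-growth inequality $J(\vect{u},\eta)-J(\vect{u},\rho)\ge \frac{\alpha''_{\mathrm{min}}}{4}\int_\Omega|\vect{u}|^2(\eta-\rho)^2\,\dx$ (strong convexity of $\alpha$ plus (\ref{FOC3})), combined with $J(\vect{u}_h,\rho_h)\to J(\vect{u},\rho)$, which indeed follows from the two-sided bounds in the proof of Proposition \ref{prop:FEMconvergence}, and weak lower semicontinuity of the Dirichlet term; note that your growth inequality already yields the lower bound $J(\vect{u},\rho_h)\ge J(\vect{u},\rho)$ for free, so your separate appeal to minimality there is redundant but harmless. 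What each approach buys: the paper's first-order route produces the quantitative estimate (\ref{materialerror9}), bounding $\|\rho-\rho_h\|_{L^2(U_\theta)}$ by a best-approximation error and the velocity error, and is therefore the natural starting point for convergence rates; your energy route is softer (no rate) but leaner, needing neither the discrete variational inequality nor any $L^4$ machinery at this step. Your treatment of the degenerate region is also slightly more careful than the paper's: the exact partition $\Omega=\{|\vect{u}|^2\ge\theta\}\cup\{0<|\vect{u}|^2<\theta\}\cup\{\vect{u}=\vect{0}\}$ with continuity of the measure from above, together with re-running the decrement argument of Proposition \ref{prop:rhosupport} to conclude $\rho=0$ a.e.\ on $\{\vect{u}=\vect{0}\}$, sidesteps the paper's step $\|\rho-\rho_h\|_{L^2(U\setminus U_\theta)}\le|U\setminus U_\theta|^{1/2}\to0$, which implicitly assumes $|\vect{u}|>0$ a.e.\ on $\mathrm{supp}(\vect{u})$; and your direct argument on the zero set ($\int\rho_h\,\dx\to0$ by weak convergence plus $0\le\rho_h\le1$) is a valid alternative to invoking Corollary \ref{cor:Omegab}. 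The endgame upgrading $L^2$ to $L^s$, $s\in[1,\infty)$, via the uniform bound $0\le\rho,\rho_h\le1$ coincides with the paper's.
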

\begin{proof}
We note that $C_{\gamma,h} \cap B_{r/2,L^2(\Omega)}(\rho)$ is a convex set, and hence for any $\eta_h \in C_{\gamma,h} \cap B_{r/2,L^2(\Omega)}(\rho)$, $t \in [0,1]$, we have that $ \rho_h  + t(\eta_h - \rho_h) \in C_{\gamma,h} \cap B_{r/2,L^2(\Omega)}(\rho)$. Since $(\vect{u}_h, \rho_h)$ is a minimizer of (\ref{BPh}), by the arguments used in Proposition \ref{prop:pressureexistence} we deduce that
\begin{align}
\int_\Omega \alpha'(\rho_h) |\vect{u}_h|^2 (\eta_h - \rho_h) \dx \geq 0 \;\; \text{for all} \;\; \eta_h \in  C_{\gamma,h} \cap B_{r/2,L^2(\Omega)}(\rho). \label{femconvergence1.5-1}
\end{align}
Hence, (\ref{FOC3}) and (\ref{femconvergence1.5-1}) imply that for all $\eta \in C_\gamma$ and $\eta_h \in  C_{\gamma,h} \cap B_{r/2,L^2(\Omega)}(\rho)$ we have that
\begin{align}
\int_\Omega \alpha'(\rho) |\vect{u}|^2 \rho \; \dx &\leq \int_\Omega \alpha'(\rho) |\vect{u}|^2 \eta \; \dx, \label{materialerror1}\\
\int_\Omega \alpha'(\rho_h) |\vect{u}_h|^2 \rho_h \; \dx &\leq \int_\Omega \alpha'(\rho_h) |\vect{u}_h|^2 \eta_h \; \dx. \;\;\label{materialerror2}
\end{align}
By subtracting $\int_\Omega \alpha'(\rho) |\vect{u}|^2 \rho_h \dx$ from (\ref{materialerror1}) and $\int_\Omega \alpha'(\rho_h) |\vect{u}_h|^2 \rho \, \dx$ from (\ref{materialerror2}), we see that
\begin{align}
\int_\Omega \alpha'(\rho) |\vect{u}|^2 (\rho - \rho_h) \; \dx &\leq \int_\Omega \alpha'(\rho)  |\vect{u}|^2 (\eta - \rho_h)\dx, \label{materialerror3}\\
\int_\Omega \alpha'(\rho_h)  |\vect{u}_h|^2 (\rho_h - \rho)\; \dx &\leq \int_\Omega \alpha'(\rho_h)  |\vect{u}_h|^2 (\eta_h - \rho) \; \dx . \label{materialerror4}
\end{align}
Summing (\ref{materialerror3}) and (\ref{materialerror4}) and rearranging the left-hand side, we see that
\begin{align}
\begin{split}
&\int_\Omega (\alpha'(\rho) - \alpha'(\rho_h))|\vect{u}|^2 (\rho - \rho_h)\dx + \int_\Omega \alpha'(\rho_h) (|\vect{u}|^2 - |\vect{u}_h|^2)(\rho - \rho_h) \dx\\
&\indent \leq \int_\Omega \alpha'(\rho)  |\vect{u}|^2 (\eta - \rho_h)\dx+ \int_\Omega \alpha'(\rho_h) |\vect{u}_h|^2 (\eta_h - \rho) \dx. \label{materialerror5}
\end{split}
\end{align}
By fixing $\eta = \rho_h \in C_\gamma$ and subtracting the second term on the left-hand side of (\ref{materialerror5}) from both sides we deduce that
\begin{align}
\label{materialerror6}
\begin{split}
&\int_\Omega (\alpha'(\rho) - \alpha'(\rho_h))|\vect{u}|^2 (\rho - \rho_h)\dx\\
&\indent \leq \int_\Omega \alpha'(\rho_h) |\vect{u}_h|^2 (\eta_h - \rho) \dx 
 + \int_\Omega \alpha'(\rho_h) (|\vect{u}_h|^2-|\vect{u}|^2)(\rho - \rho_h) \dx.
\end{split}
\end{align}
By an application of the mean value theorem, we note that there exists a $c\in (0,1)$ such that
\begin{align}
\begin{split}
&\int_\Omega (\alpha'(\rho) - \alpha'(\rho_h)) |\vect{u}|^2 (\rho - \rho_h)\dx\\
& \indent = \int_\Omega \alpha''(\rho_h + c(\rho - \rho_h))  |\vect{u}|^2 (\rho - \rho_h)^2\dx.
\end{split}\label{materialerror11}
\end{align}
By \ref{alpha5} and the definition of $U_\theta$ we bound (\ref{materialerror11}) from below: 
\begin{align}
\label{materialerror7}
\begin{split}
 &\int_\Omega \alpha''(\rho_h + c(\rho - \rho_h)) |\vect{u}|^2 (\rho - \rho_h)^2\dx\\
 & \indent \geq  \int_{U _\theta} \alpha''(\rho_h + c(\rho - \rho_h))  |\vect{u}|^2 (\rho - \rho_h)^2\dx
\geq \alpha''_{\mathrm{min}} \theta \|\rho - \rho_h\|^2_{L^2(U_\theta)}.
\end{split}
\end{align}
Now we bound the right-hand side of (\ref{materialerror6}) as follows,
\begin{align}
\label{materialerror8}
\begin{split}
&\int_\Omega \alpha'(\rho_h) |\vect{u}_h|^2 (\eta_h - \rho) \dx 
 + \int_\Omega \alpha'(\rho_h) (|\vect{u}_h|^2-|\vect{u}|^2)(\rho - \rho_h) \dx \\
&\indent \leq 2\alpha'_{\text{max}}(\|\vect{u}\|^2_{L^4(\Omega)} + \|\vect{u} - \vect{u}_h\|^2_{L^4(\Omega)}) \| \rho - \eta_h \|_{L^2(\Omega)} \\
&\indent\indent+ \alpha'_{\text{max}} \|\rho-\rho_h\|_{L^q(\Omega)} \| \vect{u} + \vect{u}_h \|_{L^{q'}(\Omega)} \| \vect{u} - \vect{u}_h \|_{L^2(\Omega)},
\end{split}
\end{align}
where $2<q' < \infty$ in two dimensions, $2<q' \leq 6$ in three dimensions, and $q = 2q'/(q'-2)$. We note that 
\begin{align}
\begin{split}
&\| \vect{u} + \vect{u}_h \|_{L^{q'}(\Omega)} \leq \| \vect{u} \|_{L^{q'}(\Omega)}  + \| \vect{u}_h \|_{L^{q'}(\Omega)} \\
&\indent \leq \| \vect{u} \|_{H^1(\Omega)}  + \| \vect{u}_h \|_{H^1(\Omega)} \leq \hat{C} < \infty,
\end{split} \label{materialerror13}
\end{align}
where the second inequality holds thanks to the Sobolev embedding theorem. 
Combining (\ref{materialerror6})--(\ref{materialerror13}) we see that
\begin{align}
\label{materialerror9}
\|\rho - \rho_h\|^2_{L^2(U_\theta)} \leq C\left( \| \rho - \eta_h \|_{L^2(\Omega)} +  \|\rho-\rho_h\|_{L^q(\Omega)}\| \vect{u} - \vect{u}_h \|_{L^2(\Omega)} \right),
\end{align}
where $C = C(\alpha'_{\text{max}}, \alpha''_{\mathrm{min}}, \theta, \|\vect{u}\|_{L^4(\Omega)}, \hat{C})$. By assumption \ref{ass:dense}, there exists a sequence of finite element functions $\tilde{\rho}_h \in C_{\gamma,h}$ such that $\tilde{\rho}_h \to \rho$ strongly in $L^2(\Omega)$. Thanks to the strong convergence, we note that for sufficiently small $h$, $\tilde{\rho}_h \in C_{\gamma,h} \cap B_{r/2,L^2(\Omega)}(\rho)$. Hence we can fix $\eta_h =  \tilde{\rho}_h$. By Proposition \ref{prop:FEMconvergence}, we know that $\vect{u}_h \to \vect{u}$ strongly in $L^2(\Omega)^d$ and since $\rho \in C_\gamma$, $\rho_h \in C_{\gamma,h} \subset C_{\gamma}$, then $\| \rho - \rho_h \|_{L^q(\Omega)}\leq |\Omega|^{1/q} \| \rho - \rho_h \|_{L^\infty(\Omega)} \leq |\Omega|^{1/q}$. Therefore, the right-hand side of (\ref{materialerror9}) tends to zero as $h \to 0$. Hence, we deduce that 
\begin{align}
\rho_h \to \rho \;\; \text{strongly in} \;\; L^2(U_\theta), \;\; \theta > 0. 
\label{materialerror15}
\end{align}
Now we note that
\begin{align}
\label{rhoOmegaSplit}
\|\rho - \rho_h\|_{L^2(\Omega)}  = \|\rho - \rho_h\|_{L^2(U_\theta)} + \|\rho - \rho_h\|_{L^2(U\backslash U_\theta)} + \|\rho - \rho_h\|_{L^2(\Omega\backslash U)}.
\end{align}
If $U\backslash U_\theta$ or $\Omega \backslash U$ are empty, we neglect the corresponding term in (\ref{rhoOmegaSplit}) with no loss of generality. Suppose $\Omega \backslash U$ is non-empty.  By definition of $U$, $\vect{u} = \vect{0}$ a.e.~in $\Omega \backslash U$. By Proposition \ref{prop:rhosupport}, this implies that $\rho = 0$ a.e.~in $\Omega \backslash U$. Therefore, Corollary \ref{cor:Omegab} implies that 
\begin{align}
\label{materialerror16}
\rho_h \to \rho \;\;\text{strongly in} \;\; L^2(\Omega \backslash U). 
\end{align}
Suppose $U\backslash U_\theta$ is non-empty. Since, $\rho, \rho_h \in C_\gamma$ we see that 
\begin{align}
\label{materialerror17}
\|\rho - \rho_h\|_{L^2(U\backslash U_\theta)} \leq |U \backslash U_\theta|^{1/2} \to 0 \;\; \text{as} \;\; \theta \to 0.
\end{align} 
Therefore, by first taking the limit as $h \to 0$ and then by taking the limit as $\theta \to 0$, (\ref{materialerror15})--(\ref{materialerror17}) imply that $\rho_h \to \rho$ strongly in $L^2(\Omega)$. 

Since $\| \rho - \rho_h \|_{L^1(\Omega)} \leq |\Omega|^{1/2} \|\rho - \rho_h \|_{L^2(\Omega)}$, we see that $\rho_h \to \rho$ strongly in $L^1(\Omega)$. Hence, for any $s \in [1,\infty)$,
\begin{align}
\int_{\Omega} | \rho - \rho_h|^s \dx = \int_{\Omega}  | \rho - \rho_h|^{s-1}  | \rho - \rho_h|\dx \leq 1^{s-1} \| \rho - \rho_h \|_{L^1(\Omega)},
\end{align}
which implies that $\rho_h \to \rho$ strongly in $L^s(\Omega)$.
\end{proof}

\begin{prop}[Strong convergence of $\vect{u}_h$ in $H^1(\Omega)^d$]
\label{prop:FEMconvergence2}
There exists a subsequence of minimizers, $(\vect{u}_h)$, of (\ref{BPh}) such that
\begin{align}
\vect{u}_h \to \vect{u} \; \text{strongly in} \; H^1(\Omega)^d.
\end{align}
\end{prop}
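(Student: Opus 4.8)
The plan is to upgrade the weak $H^1$ convergence established in Proposition~\ref{prop:FEMconvergence} to strong convergence by proving convergence of norms. Since $H^1(\Omega)^d$ is a Hilbert space, the combination of weak convergence $\vect{u}_h \weak \vect{u}$ with norm convergence $\|\vect{u}_h\|_{H^1(\Omega)} \to \|\vect{u}\|_{H^1(\Omega)}$ forces $\vect{u}_h \to \vect{u}$ strongly. Because $\vect{u}_h \to \vect{u}$ strongly in $L^2(\Omega)^d$ already gives $\|\vect{u}_h\|_{L^2(\Omega)} \to \|\vect{u}\|_{L^2(\Omega)}$, and $\|\vect{u}_h\|_{H^1(\Omega)}^2 = \|\vect{u}_h\|_{L^2(\Omega)}^2 + \|\nabla\vect{u}_h\|_{L^2(\Omega)}^2$, the whole task reduces to showing $\|\nabla \vect{u}_h\|_{L^2(\Omega)} \to \|\nabla \vect{u}\|_{L^2(\Omega)}$.

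To reach the gradient norm I would exploit convergence of the energies. The proof of Proposition~\ref{prop:FEMconvergence} in fact establishes $J(\vect{u}_h,\rho_h) \to J(\vect{u},\rho)$ along the chosen subsequence: (\ref{eq:femapprox1}) yields $\limsup_{h\to0} J(\vect{u}_h,\rho_h)\leq J(\vect{u},\rho)$, while (\ref{lowersemi}) together with the identification $J(\hat{\vect{u}},\hat{\rho}) = J(\vect{u},\rho)$ supplies the matching lower bound on the $\liminf$. Rearranging the definition of $J$ then gives
\begin{align*}
\nu \|\nabla \vect{u}_h\|^2_{L^2(\Omega)} = 2 J(\vect{u}_h,\rho_h) - \int_\Omega \alpha(\rho_h)|\vect{u}_h|^2 \dx + 2\int_\Omega \vect{f}\cdot\vect{u}_h \dx.
\end{align*}
The first term on the right converges to $2J(\vect{u},\rho)$ by energy convergence, and the last converges to $2\int_\Omega \vect{f}\cdot\vect{u}\dx$ since $\vect{f}\in L^2(\Omega)^d$ and $\vect{u}_h\to\vect{u}$ strongly in $L^2(\Omega)^d$. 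Everything therefore hinges on the nonlinear coupling term $\int_\Omega \alpha(\rho_h)|\vect{u}_h|^2\dx$, which I expect to be the main obstacle.

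For this term the plan is to split
\begin{align*}
\int_\Omega \left( \alpha(\rho_h)|\vect{u}_h|^2 - \alpha(\rho)|\vect{u}|^2 \right) \dx = \int_\Omega (\alpha(\rho_h)-\alpha(\rho))|\vect{u}_h|^2 \dx + \int_\Omega \alpha(\rho)(|\vect{u}_h|^2 - |\vect{u}|^2) \dx.
\end{align*}
For the first integral, Hölder's inequality and the Lipschitz continuity of $\alpha$ give the bound $L_\alpha \|\rho_h-\rho\|_{L^2(\Omega)} \|\vect{u}_h\|^2_{L^4(\Omega)}$; here $\|\vect{u}_h\|_{L^4(\Omega)}$ is bounded uniformly in $h$ by the Sobolev embedding $H^1(\Omega)\hookrightarrow L^4(\Omega)$ (valid for $d\in\{2,3\}$) and the boundedness of $(\vect{u}_h)$ in $H^1(\Omega)^d$, while $\|\rho_h-\rho\|_{L^2(\Omega)}\to 0$ by Proposition~\ref{prop:FEMconvergence1.5}. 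For the second integral, the bound $\alpha(\rho)\leq\bar{\alpha}$ together with the factorization $|\vect{u}_h|^2 - |\vect{u}|^2 = (|\vect{u}_h|-|\vect{u}|)(|\vect{u}_h|+|\vect{u}|)$ yields, via Cauchy--Schwarz and the reverse triangle inequality, the bound $\bar{\alpha}\,\|\vect{u}_h-\vect{u}\|_{L^2(\Omega)}(\|\vect{u}_h\|_{L^2(\Omega)}+\|\vect{u}\|_{L^2(\Omega)})$, which vanishes by the strong $L^2$ convergence of $\vect{u}_h$. Hence $\int_\Omega \alpha(\rho_h)|\vect{u}_h|^2\dx \to \int_\Omega \alpha(\rho)|\vect{u}|^2\dx$, so $\nu\|\nabla\vect{u}_h\|^2_{L^2(\Omega)}\to \nu\|\nabla\vect{u}\|^2_{L^2(\Omega)}$, and the reduction of the first paragraph then delivers the strong convergence $\vect{u}_h\to\vect{u}$ in $H^1(\Omega)^d$. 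The essential difficulty is confined entirely to the coupling term, where the \emph{strong} (rather than merely weak) convergence of both $\rho_h$ and $\vect{u}_h$, supplied by Propositions~\ref{prop:FEMconvergence} and~\ref{prop:FEMconvergence1.5}, is indispensable.
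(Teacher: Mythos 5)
Your proof is correct, but it takes a genuinely different route from the paper's. The paper argues via a C\'ea/Falk-type quasi-optimality estimate: it first notes that $W_h = V_{\vect{g}_h,h}\cap B_{r/2,H^1(\Omega)}(\vect{u})$ is convex, so the minimizers of (\ref{BPh}) satisfy the discrete variational inequality $a_{\rho_h}(\vect{u}_h,\vect{w}_h-\vect{u}_h) - l_{\vect{f}}(\vect{w}_h-\vect{u}_h)\geq 0$; combining this with the continuous condition (\ref{FOC1}) (which brings the Lagrange multiplier $p$ into play) and using the coercivity of $a_{\rho_h}$ and boundedness of $b$, it derives the bound (\ref{uhh1convergence1}), namely $\|\vect{u}-\vect{u}_h\|_{H^1(\Omega)} \leq C'\left(\|\vect{u}-\vect{w}_h\|_{H^1(\Omega)} + \|(\alpha(\rho)-\alpha(\rho_h))\vect{u}\|_{L^2(\Omega)} + \|p-q_h\|_{L^2(\Omega)}\right)$ for all $\vect{w}_h\in W_h$, $q_h \in M_h$, and concludes by inserting recovery sequences for $\vect{u}$ and $p$ and estimating the middle term by $L_\alpha\|\rho-\rho_h\|_{L^4(\Omega)}\|\vect{u}\|_{L^4(\Omega)}$. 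Your argument is instead a soft energy argument: the $\limsup$ bound (\ref{eq:femapprox1}) (which, as you correctly read it, is a $\limsup$ statement, valid since $J(\vect{u}_h,\rho_h)\leq J(\vect{\tilde{u}}_h,\tilde{\rho}_h)$ for all small $h$) and the $\liminf$ bound (\ref{lowersemi}) with the identification $J(\hat{\vect{u}},\hat{\rho})=J(\vect{u},\rho)$ sandwich the energies, you pass to the limit in the coupling term $\int_\Omega \alpha(\rho_h)|\vect{u}_h|^2\,\dx$ using the uniform $L^4$ bound on $\vect{u}_h$ (automatic since $\vect{u}_h \in B_{r/2,H^1(\Omega)}(\vect{u})$), and you conclude $\|\nabla\vect{u}_h\|_{L^2(\Omega)}\to\|\nabla\vect{u}\|_{L^2(\Omega)}$ and hence strong convergence by the Radon--Riesz property of the Hilbert space $H^1(\Omega)^d$. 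There is no circularity in either route: both invoke Proposition \ref{prop:FEMconvergence1.5} (whose proof uses only the weak/strong-$L^2$ conclusions of Proposition \ref{prop:FEMconvergence}) at exactly the same spot, the nonlinear coupling term, and both pass to a further subsequence there. What the paper's route buys is a quantitative bound separating the approximation errors of $\vect{u}$, $p$, and $\rho$ --- the natural starting point for the convergence-rate study of Section \ref{sec:numerical}; what yours buys is economy, since it never touches the discrete variational inequality, the pressure, or the inf-sup structure, using only facts already established in Propositions \ref{prop:FEMconvergence} and \ref{prop:FEMconvergence1.5}.
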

\begin{proof}
We note that the set $W_h:=V_{\vect{g}_h,h} \cap B_{r/2,H^1(\Omega)}(\vect{u})$ is convex. Hence, by following the same arguments for deriving the variational inequality on $\rho$ as in Proposition \ref{prop:pressureexistence}, it can be shown that minimizers of (\ref{BPh}) satisfy the variational inequality
\begin{align}
a_{\rho_h}(\vect{u}_h,\vect{w}_h-\vect{u}_h) - l_f(\vect{w}_h-\vect{u}_h) \geq 0 \quad \text{for all} \quad \vect{w}_h \in W_{h}. \label{eq:vi-h}
\end{align}
We note that in the next proposition (Proposition \ref{prop:FEMconvergence3}), we will show that (\ref{eq:vi-h}) can be strengthened to an equality. However, this result is not currently available at this point and an equality does not follow from the arguments in Proposition \ref{prop:pressureexistence}. We note though that an inequality is sufficient for the subsequent arguments. From Proposition \ref{prop:pressureexistence} we deduce that, for all $\vect{w}_h \in W_{h}$,
\begin{align*}
a_{\rho}(\vect{u},\vect{w}_h-\vect{u}_h) + b(\vect{w}_h-\vect{u}_h,p) &= l_f(\vect{w}_h-\vect{u}_h).
\end{align*}
Hence
\begin{align}
a_{\rho_h}(\vect{u}_h,\vect{u}_h-\vect{w}_h) \leq a_{\rho}(\vect{u},\vect{u}_h-\vect{w}_h) + b(\vect{u}_h-\vect{w}_h,p).
\end{align}
By subtracting $a_{\rho_h}(\vect{w}_h,\vect{u}_h-\vect{w}_h)$ from both sides we see that
\begin{align*}
&a_{\rho_h}(\vect{u}_h-\vect{w}_h,\vect{u}_h-\vect{w}_h) \\
&\indent \leq a_{\rho}(\vect{u},\vect{u}_h-\vect{w}_h) - a_{\rho_h}(\vect{w}_h,\vect{u}_h-\vect{w}_h)  + b(\vect{u}_h-\vect{w}_h,p-q_h).
\end{align*}
We note that $a_{\rho_h}$  is coercive with constant $c_a  = \nu/(c_p^2 +1)$, and $b$ is bounded with constant $C_b$. Hence,
\begin{align*}
&\|\vect{u}_h - \vect{w}_h\|^2_{H^1(\Omega)} \leq \frac{1}{c_a} a_{\rho_h}(\vect{u}_h-\vect{w}_h,\vect{u}_h - \vect{w}_h) \\
&\indent\leq \frac{1}{c_a} \left(a_{\rho}(\vect{u},\vect{u}_h - \vect{w}_h) - a_{\rho_h}(\vect{w}_h,\vect{u}_h - \vect{w}_h)  + b(\vect{u}_h - \vect{w}_h,p-q_h)\right)\\
&\indent= \frac{1}{c_a}\left(\int_\Omega \alpha(\rho_h)(\vect{u} - \vect{w}_h) \cdot (\vect{u}_h - \vect{w}_h)
+ \left(\alpha(\rho) - \alpha(\rho_h)\right) \vect{u} \cdot (\vect{u}_h-\vect{w}_h) \dx\right.\\
&\indent\indent + \left.\int_\Omega \nu \nabla(\vect{u}- \vect{w}_h) : \nabla(\vect{u}_h - \vect{w}_h)  \dx + b(\vect{u}_h - \vect{w}_h,p-q_h)\right)\\
&\indent\leq \frac{1}{c_a} \bar \alpha \|\vect{u}-\vect{w}_h\|_{L^2(\Omega)}\|\vect{u}_h-\vect{w}_h\|_{L^2(\Omega)}\\
&\indent\indent+ \frac{1}{c_a} \|(\alpha(\rho) - \alpha(\rho_h)) \vect{u} \|_{L^{2}(\Omega)} \|\vect{u}_h-\vect{w}_h\|_{L^2(\Omega)}\\
&\indent\indent+ \frac{\nu}{c_a} |\vect{u} - \vect{w}_h|_{H^1(\Omega)}|\vect{u}_h-\vect{w}_h|_{H^1(\Omega)}
+ \frac{C_b}{c_a} \|\vect{u}_h-\vect{w}_h\|_{H^1(\Omega)}\|p-q_h\|_{L^2(\Omega)}.
\end{align*}
Hence,
\begin{align*}
\|\vect{u}_h - \vect{w}_h\|_{H^1(\Omega)} \leq C\left(\|\vect{u}-\vect{w}_h\|_{H^1(\Omega)}+ \|(\alpha(\rho) - \alpha(\rho_h)) \vect{u}\|_{L^{2}(\Omega)} + \|p-q_h\|_{L^2(\Omega)}\right),
\end{align*}
where $C = C(\bar\alpha, \nu, c_a, C_b)$  is a constant. This implies that, for all $\vect{w}_h \in W_h$,
\begin{align}
\begin{split}
&\|\vect{u} - \vect{u}_h\|_{H^1(\Omega)}\\
 &\indent\leq C'\left(\|\vect{u}-\vect{w}_h\|_{H^1(\Omega)}+ \|(\alpha(\rho) - \alpha(\rho_h)) \vect{u} \|_{L^{2}(\Omega)} + \|p-q_h\|_{L^2(\Omega)}\right). \label{uhh1convergence1}
\end{split}
\end{align}
where $C' = C'(\bar\alpha, \nu, c_a, C_b, L_\alpha)$. For sufficiently small $h$, we note that $\vect{\tilde{u}}_h \in W_h$ (where $\vect{\tilde{u}}_h$ is defined in the proof of Proposition \ref{prop:FEMconvergence}) and $\vect{\tilde{u}}_h \to \vect{u}$ strongly in $H^1(\Omega)^d$. Moreover by assumption \ref{ass:dense}, there exists a sequence of finite element functions $\tilde{p}_h \in M_h$ that converges to $p$ strongly in $L^2(\Omega)$. Suppose $\vect{w}_h =\vect{\tilde{u}}_h$ and $q_h = \tilde{p}_h$. From Proposition \ref{prop:FEMconvergence1.5}, we know that there exists a subsequence (not indicated) such that  $\rho_h \to \rho$ strongly in $L^4(\Omega)$. We now observe that
\begin{align}
\|(\alpha(\rho) - \alpha(\rho_h)) \vect{u} \|_{L^{2}(\Omega)} 
\leq L_\alpha  \|\rho - \rho_h\|_{L^4(\Omega)} \| \vect{u}\|_{L^{4}(\Omega)}
\end{align}
where $L_\alpha$ is the Lipschitz constant for $\alpha$. Hence by taking the limit as $h\to 0$, we deduce that $\vect{u}_h \to \vect{u}$ strongly in $H^1(\Omega)^d$. 
\end{proof}

In the following proposition, we show that (up to a subsequence) minimizers of (\ref{BPh}) also satisfy the first-order optimality conditions that are the finite-dimensional analogue of the first-order optimality conditions associated with (\ref{borrvallmin}). This allows us to consider the finite-dimensional optimization problem over the whole set $V_{\vect{g}_h,h} \times C_{\gamma,h}$, rather than the restricted set $(V_{\vect{g}_h,h} \cap B_{r/2,H^1(\Omega)}(\vect{u})) \times (C_{\gamma, h} \cap B_{r/2,L^2(\Omega)}(\rho))$.

\begin{prop}[Discretized first-order optimality conditions]
\label{prop:FEMconvergence3}
There exists an $\bar h > 0$ such that for all $h < \bar h$, there exists a unique Lagrange multiplier $p_h \in M_h$ such that the functions $(\vect{u}_h, \rho_h)$ that locally minimize (\ref{BPh}) satisfy the first-order optimality conditions (\ref{FOC1h})--(\ref{FOC3h})
\end{prop}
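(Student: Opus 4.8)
The plan is to exploit the strong convergence already established to show that, for all sufficiently small $h$, the minimizer $(\vect{u}_h,\rho_h)$ of (\ref{BPh}) lies in the \emph{interior} of the artificial ball constraint, so that this constraint becomes inactive and the restricted variational inequalities upgrade to the genuine discrete first-order conditions. Working along a common subsequence furnished by Propositions \ref{prop:FEMconvergence2} and \ref{prop:FEMconvergence1.5}, we have $\|\vect{u}-\vect{u}_h\|_{H^1(\Omega)} \to 0$ and $\|\rho-\rho_h\|_{L^2(\Omega)}\to 0$. Hence there exists $\bar h > 0$ such that, for all $h < \bar h$,
\begin{align*}
\|\vect{u}-\vect{u}_h\|_{H^1(\Omega)} < r/2 \quad\text{and}\quad \|\rho-\rho_h\|_{L^2(\Omega)} < r/2,
\end{align*}
so that $\vect{u}_h$ sits strictly inside $B_{r/2,H^1(\Omega)}(\vect{u})$ and $\rho_h$ strictly inside $B_{r/2,L^2(\Omega)}(\rho)$.

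First I would treat the velocity. Fix any $\vect{v}_h \in V_{0,h}$. Since $\vect{u}_h \in V_{\vect{g}_h,h}$, the perturbation $\vect{u}_h + t\vect{v}_h$ retains the discrete boundary data and the discrete divergence constraint, hence lies in $V_{\vect{g}_h,h}$; by strict interiority it also lies in $B_{r/2,H^1(\Omega)}(\vect{u})$ for all $|t|$ small, so $\vect{u}_h + t\vect{v}_h \in W_h$ for \emph{both} signs of $t$. Inserting $\vect{w}_h = \vect{u}_h + t\vect{v}_h$ into the variational inequality (\ref{eq:vi-h}) and letting $t\to 0^{+}$ and $t\to 0^{-}$ yields the equality
\begin{align*}
a_{\rho_h}(\vect{u}_h,\vect{v}_h) = l_{\vect{f}}(\vect{v}_h) \quad\text{for all}\quad \vect{v}_h \in V_{0,h}.
\end{align*}
To pass from $V_{0,h}$ to $X_{0,h}$ and introduce the Lagrange multiplier I would mirror Proposition \ref{prop:pressureexistence} in the finite-dimensional setting: writing $A_h,B_{0,h}$ for the discrete analogues of $A,B_0$, the discrete inf--sup condition \ref{ass:discreteinfsup} makes $B_{0,h}^\ast$ injective, and in finite dimensions $\mathrm{Im}\, B_{0,h}^\ast = (\mathrm{ker}\, B_{0,h})^\circ = V_{0,h}^\circ$ automatically. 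Since the above equality says the residual $A_h\vect{u}_h - \vect{f} \in V_{0,h}^\circ$, there is a unique $p_h \in M_h$ with $A_h\vect{u}_h - \vect{f} = -B_{0,h}^\ast p_h$; this is (\ref{FOC1h}), while (\ref{FOC2h}) is immediate from $\vect{u}_h \in V_{\vect{g}_h,h}$.

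For the material distribution I would use the same interiority to strip the ball restriction from (\ref{femconvergence1.5-1}). Given any $\eta_h \in C_{\gamma,h}$, convexity of $C_{\gamma,h}$ gives $\rho_h + t(\eta_h-\rho_h) \in C_{\gamma,h}$ for $t\in[0,1]$, and strict interiority of $\rho_h$ ensures $\rho_h + t(\eta_h-\rho_h) \in B_{r/2,L^2(\Omega)}(\rho)$ for all sufficiently small $t>0$ (by the triangle inequality $\|\rho-\rho_h - t(\eta_h-\rho_h)\|_{L^2(\Omega)} \leq \|\rho-\rho_h\|_{L^2(\Omega)} + t\|\eta_h-\rho_h\|_{L^2(\Omega)}$). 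It is therefore an admissible test function in (\ref{femconvergence1.5-1}); dividing by $t$ and recalling the definition of $c_{\vect{u}_h}$ recovers $c_{\vect{u}_h}(\rho_h,\eta_h-\rho_h)\geq 0$ for all $\eta_h\in C_{\gamma,h}$, which is (\ref{FOC3h}).

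The step I expect to be most delicate is the strict interiority claim: it is precisely what renders the artificial ball inactive and, crucially, what licenses the \emph{two-sided} velocity variations needed to upgrade the inequality (\ref{eq:vi-h}) to an equality. Everything downstream is then a finite-dimensional transcription of Proposition \ref{prop:pressureexistence}. One must also verify that each admissible perturbation remains simultaneously in $V_{\vect{g}_h,h}$ (resp.\ $C_{\gamma,h}$) and in the ball, but this follows routinely from the triangle inequality once the strict bounds above are secured.
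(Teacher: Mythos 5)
Your proposal is correct and follows essentially the same route as the paper: strong convergence forces $(\vect{u}_h,\rho_h)$ strictly inside the artificial ball, two-sided variations $\vect{u}_h + t\vect{v}_h$ then upgrade the restricted inequality to an equality on $V_{0,h}$, and the discrete inf--sup condition \ref{ass:discreteinfsup} yields the unique $p_h \in M_h$ exactly as in the finite-dimensional transcription of Proposition \ref{prop:pressureexistence}. If anything, you spell out a detail the paper leaves implicit in its ``adding the subscript $_h$ where necessary'' remark, namely that the interiority of $\rho_h$ (via Proposition \ref{prop:FEMconvergence1.5}) is what lets you strip the ball restriction from (\ref{femconvergence1.5-1}) and obtain (\ref{FOC3h}) for \emph{all} $\eta_h \in C_{\gamma,h}$.
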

\begin{proof}
From Proposition \ref{prop:FEMconvergence2}, we know that $\vect{u}_h \to \vect{u}$ strongly in $H^1(\Omega)^d$. Hence by definition of strong convergence, there exists an $\bar h>0$ such that, for all $h \leq \bar h$, $\|\vect{u} - \vect{u}_{h}\|_{H^1(\Omega)} \leq r/4$. Therefore, for each $\vect{v}_h \in V_{0,h}$, if $|t|<r/(4\|\vect{v}_h\|_{H^1(\Omega)})$ then $\vect{u}_h + t \vect{v}_h \in V_{\vect{g}_h,h} \cap B_{r/2,H^1(\Omega)}(\vect{u})$. Now we can follow the reasoning of the proof of Proposition \ref{prop:pressureexistence} (adding the subscript $_h$ where necessary) to deduce the existence of a unique $p_h \in M_h$ such that (\ref{FOC1h})--(\ref{FOC3h}) hold.
\end{proof}
\begin{prop}[Strong convergence of $p_h$ in $L^2(\Omega)$]
\label{prop:FEMconvergence4}
There is a subsequence of the unique $p_h \in M_h$ defined in Proposition \ref{prop:FEMconvergence3} that converges strongly in $L^2(\Omega)$ to the $p\in L^2_0(\Omega)$ that solves (\ref{FOC1})--(\ref{FOC3}) for the given isolated local minimizer $(\vect{u}, \rho)$.
\end{prop}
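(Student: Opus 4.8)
The plan is to exploit the discrete inf-sup condition \ref{ass:discreteinfsup} to control the pressure error by the velocity and material-distribution errors, both of which have already been shown to vanish in Propositions \ref{prop:FEMconvergence1.5} and \ref{prop:FEMconvergence2}. First I would invoke the density assumption \ref{ass:dense} to select a sequence $\tilde{p}_h \in M_h$ with $\tilde{p}_h \to p$ strongly in $L^2(\Omega)$, and split $\|p - p_h\|_{L^2(\Omega)} \leq \|p - \tilde{p}_h\|_{L^2(\Omega)} + \|\tilde{p}_h - p_h\|_{L^2(\Omega)}$. The first term tends to zero by construction, so the whole estimate reduces to bounding the discrete quantity $\|\tilde{p}_h - p_h\|_{L^2(\Omega)}$ with $\tilde{p}_h - p_h \in M_h$.

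Since $\tilde{p}_h - p_h \in M_h$, the inf-sup condition \ref{ass:discreteinfsup} gives $c_b \|\tilde{p}_h - p_h\|_{L^2(\Omega)} \leq \sup_{\vect{v}_h \in X_{0,h} \setminus \{0\}} b(\vect{v}_h, \tilde{p}_h - p_h)/\|\vect{v}_h\|_{H^1(\Omega)}$. I would then write $b(\vect{v}_h, \tilde{p}_h - p_h) = b(\vect{v}_h, \tilde{p}_h - p) + b(\vect{v}_h, p - p_h)$. The first summand is controlled by the boundedness of $b$, namely $b(\vect{v}_h, \tilde{p}_h - p) \leq C_b \|\vect{v}_h\|_{H^1(\Omega)} \|\tilde{p}_h - p\|_{L^2(\Omega)}$. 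For the second summand, since $X_{0,h} \subset H^1_0(\Omega)^d$ by conformity, I can test both (\ref{FOC1}) and (\ref{FOC1h}) with the same $\vect{v}_h \in X_{0,h}$ and subtract, obtaining the residual identity $b(\vect{v}_h, p - p_h) = a_{\rho_h}(\vect{u}_h, \vect{v}_h) - a_\rho(\vect{u}, \vect{v}_h)$.

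The crux is bounding this residual, and it is here that the strong convergence of the lower-order quantities enters. I would split $a_{\rho_h}(\vect{u}_h, \vect{v}_h) - a_\rho(\vect{u}, \vect{v}_h)$ into the viscous part $\nu \int_\Omega \nabla(\vect{u}_h - \vect{u}) : \nabla \vect{v}_h \, \dx$ and the permeability part $\int_\Omega [\alpha(\rho_h)\vect{u}_h - \alpha(\rho)\vect{u}] \cdot \vect{v}_h \, \dx$, the latter being further decomposed via $\alpha(\rho_h)\vect{u}_h - \alpha(\rho)\vect{u} = \alpha(\rho_h)(\vect{u}_h - \vect{u}) + (\alpha(\rho_h) - \alpha(\rho))\vect{u}$. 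The viscous term is bounded by $\nu \|\vect{u}_h - \vect{u}\|_{H^1(\Omega)} \|\vect{v}_h\|_{H^1(\Omega)}$; the first permeability contribution by $\bar{\alpha} \|\vect{u}_h - \vect{u}\|_{L^2(\Omega)} \|\vect{v}_h\|_{L^2(\Omega)}$ using \ref{alpha1}; and the second by the Lipschitz continuity of $\alpha$ together with a generalized H\"older inequality with exponents $(4,4,2)$ and the Sobolev embedding $H^1(\Omega) \hookrightarrow L^4(\Omega)$, yielding a bound $L_\alpha \|\rho_h - \rho\|_{L^4(\Omega)} \|\vect{u}\|_{L^4(\Omega)} \|\vect{v}_h\|_{L^2(\Omega)}$. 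Collecting these estimates gives $|b(\vect{v}_h, p - p_h)| \leq C(\|\vect{u}_h - \vect{u}\|_{H^1(\Omega)} + \|\rho_h - \rho\|_{L^4(\Omega)}) \|\vect{v}_h\|_{H^1(\Omega)}$ with $C$ independent of $h$.

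Dividing by $\|\vect{v}_h\|_{H^1(\Omega)}$, taking the supremum, and combining with the earlier bounds yields $c_b \|\tilde{p}_h - p_h\|_{L^2(\Omega)} \leq C_b \|\tilde{p}_h - p\|_{L^2(\Omega)} + C(\|\vect{u}_h - \vect{u}\|_{H^1(\Omega)} + \|\rho_h - \rho\|_{L^4(\Omega)})$. Passing to a common subsequence along which $\vect{u}_h \to \vect{u}$ strongly in $H^1(\Omega)^d$ (Proposition \ref{prop:FEMconvergence2}) and $\rho_h \to \rho$ strongly in $L^4(\Omega)$ (Proposition \ref{prop:FEMconvergence1.5}), and using $\tilde{p}_h \to p$, every term on the right vanishes as $h \to 0$; hence $\|\tilde{p}_h - p_h\|_{L^2(\Omega)} \to 0$, and the triangle inequality delivers $p_h \to p$ strongly in $L^2(\Omega)$. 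I expect the main obstacle to be essentially bookkeeping rather than conceptual: ensuring that the constant $C$ is genuinely independent of $h$ (which relies on the uniform $H^1$-boundedness of the sequence $(\vect{u}_h)$ and on the $h$-independent inf-sup constant $c_b$), and selecting the H\"older exponents so that the available strong convergence of $\rho_h$ in $L^s(\Omega)$ for finite $s$ is enough to handle the genuinely nonlinear term $(\alpha(\rho_h) - \alpha(\rho))\vect{u}$.
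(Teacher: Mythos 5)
Your proposal is correct and follows essentially the same route as the paper's proof: the discrete inf-sup condition applied to $\tilde{p}_h - p_h$, the residual identity $b(\vect{v}_h, p - p_h) = a_{\rho_h}(\vect{u}_h,\vect{v}_h) - a_\rho(\vect{u},\vect{v}_h)$ obtained by testing (\ref{FOC1}) and (\ref{FOC1h}) with the same conforming $\vect{v}_h \in X_{0,h}$, and the decomposition $\alpha(\rho_h)\vect{u}_h - \alpha(\rho)\vect{u} = \alpha(\rho_h)(\vect{u}_h - \vect{u}) + (\alpha(\rho_h)-\alpha(\rho))\vect{u}$ handled via Lipschitz continuity of $\alpha$, H\"older with exponents $(4,4,2)$, and the $L^4(\Omega)$ convergence of $\rho_h$ from Proposition \ref{prop:FEMconvergence1.5}. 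The only difference is cosmetic: the paper estimates $\|q_h - p_h\|_{L^2(\Omega)}$ for generic $q_h \in M_h$ and specializes $q_h = \tilde{p}_h$ at the end, whereas you fix $\tilde{p}_h$ from assumption \ref{ass:dense} at the outset.
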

\begin{proof}
The inf-sup condition \ref{ass:discreteinfsup} for $M_h$ and $X_{0,h}$ implies that, for any $q_h \in M_h$,
\begin{align*}
c_b \|q_h-p_h\|_{L^2(\Omega)}  &\leq \sup_{\vect{w}_h \in X_{0,h} \backslash \{0\}} \frac{b(\vect{w}_h,q_h-p_h)}{\|\vect{w}_h\|_{H^1(\Omega)}}\\
&=  \sup_{\vect{w}_h \in X_{0,h}  \backslash \{0\}} \frac{b(\vect{w}_h,p-p_h)+b(\vect{w}_h,q_h-p)}{\|\vect{w}_h\|_{H^1(\Omega)}}\\
&\leq  \sup_{\vect{w}_h \in X_{0,h}  \backslash \{0\}} \frac{|b(\vect{w}_h,p-p_h)|+|b(\vect{w}_h,q_h-p)|}{\|\vect{w}_h\|_{H^1(\Omega)}}\\
&= \sup_{\vect{w}_h \in X_{0,h} \backslash \{0\}} \frac{|a_\rho(\vect{u},\vect{w}_h) - a_{\rho_h}(\vect{u}_h,\vect{w}_h)|+|b(\vect{w}_h,q_h-p)|}{\|\vect{w}_h\|_{H^1(\Omega)}}\\
& \leq \|(\alpha(\rho) - \alpha(\rho_h))\vect{u}\|_{L^2(\Omega)} + \left(\bar \alpha+\nu\right)\|\vect{u}-\vect{u}_h\|_{H^1(\Omega)}\\
&\indent + C_b \|p-q_h\|_{L^2(\Omega)}.
\end{align*}
Hence,
\begin{align*}
\|p-p_h\|_{L^2(\Omega)} \leq C \left(\|(\alpha(\rho) - \alpha(\rho_h)) \vect{u} \|_{L^{2}(\Omega)} + \|\vect{u}-\vect{u}_h\|_{H^1(\Omega)} + \|p-q_h\|_{L^2(\Omega)}\right),
\end{align*}
where $C = C(c_b, C_b, \bar \alpha, L_\alpha, \nu)$. By assumption \ref{ass:dense}, there exists a sequence of finite element functions, $\tilde{p}_h \in M_h$ that satisfies $\tilde{p}_h \to p$ strongly in $L^2(\Omega)$. Let  $q_h = \tilde{p}_h$. We have already shown that $\vect{u}_h \to \vect{u}$ strongly in $H^1(\Omega)^d$ in Proposition \ref{prop:FEMconvergence2}. Similarly, in the proof of Proposition \ref{prop:FEMconvergence2} we also showed that $\|(\alpha(\rho) - \alpha(\rho_h)) \vect{u} \|_{L^{2}(\Omega)} \to 0$. Hence we conclude that $p_h \to p$ strongly in $L^2(\Omega)$.
\end{proof}

\begin{proof}[Proof of Theorem \ref{th:FEMexistence}]
Fix an isolated minimizer $(\vect{u}, \rho)$ of (\ref{borrvallmin}) and its unique associated Lagrange multiplier $p$. By the results of Propositions \ref{prop:FEMconvergence}, \ref{prop:FEMconvergence1.5}, \ref{prop:FEMconvergence2}, and \ref{prop:FEMconvergence3}, there exists a mesh size $\bar h$ such that for, $h < \bar h$, there exists a sequence of finite element solutions $(\vect{u}_h, \rho_h, p_h) \in V_{\vect{g}_h,h} \times C_{\gamma,h} \times M_h$ satisfying (\ref{FOC1h})--(\ref{FOC3h}) that converges to $(\vect{u}, \rho, p)$. By taking a subsequence if necessary (not indicated), Proposition \ref{prop:FEMconvergence1.5}, implies that $\rho_h \to \rho$ strongly in $L^s(\Omega)$, $s \in [1,\infty)$, Proposition \ref{prop:FEMconvergence2} implies that $\vect{u}_h \to \vect{u}$ strongly in $H^1(\Omega)^d$, and Proposition \ref{prop:FEMconvergence4} implies that $p_h \to p$ strongly in $L^2(\Omega)$. 
\end{proof}

\section{Numerical results}
\label{sec:numerical}
The main goal of this section is to experimentally verify the existence of strongly converging sequences that were proven to exist in Section \ref{sec:femapprox}. In all examples the systems are discretized with the finite element method using FEniCS \cite{fenics}. The computational domains are triangulated with simplices and we define the mesh size, $h$, as the maximum diameter of all the simplices in the triangulation. 
 The solutions are computed using the deflated barrier method \cite{Papadopoulos2021}. The deflated barrier method reformulates the discretized first-order optimality conditions (\ref{FOC1h})--(\ref{FOC3h}) as a mixed complementarity problem. The mixed complementarity problem is then solved with a primal-dual active set strategy \cite{Benson2003}, a Newton-like solver that incorporates the box constraints on $\rho_h$. Typically, nonlinear convergence is difficult to achieve from a na\"ive initial guess. Hence, the functional $J$ in (\ref{borrvallmin}) is augmented with barrier-like terms. Continuation of the barrier parameter to zero then recovers the solution to the original first-order optimality conditions. A key feature of the deflated barrier method, as required in this work, is the ability to systematically discover multiple solutions of topology optimization problems from the same initial guess. This is achieved via the deflation technique \cite{Farrell2015, Farrell2019b}, \cite[Sec.~3.2]{Papadopoulos2021}. Deflation prevents a Newton-like solver from converging to an already discovered solution by modifying the discretized first-order optimality conditions with a deflation operator. Deflation is extremely cheap to implement (effectively at the same cost as two inner products) and does not affect the conditioning of the linear systems that are solved during the run of the Newton-like solver.  The resulting linear systems arising in the deflated barrier method are solved by a sparse LU factorization with MUMPS \cite{mumps} and PETSc \cite{petsc}. 

There are no known solutions for choices of the inverse permeability, $\alpha$, used in practice. Hence, errors are measured with respect to a heavily-refined finite element solution, which is constructed as follows; first the finite element solutions are computed on a mesh with mesh size $h = 0.028$, using the deflated barrier method. Next, the mesh is adaptively refined three times in areas where the material distribution is between 1/10 and 9/10. Each time the mesh is refined, the coarse-mesh solution is interpolated onto the finer mesh as an initial guess and the first-order optimality conditions are re-solved using the deflated barrier method.

In principle, there can be infinitely many different subsequences of finite element solutions that strongly converge to the same minimizer  of the infinite-dimensional problem   at different convergence rates. Separate subsequences cause difficulties in the interpretation of the convergence plots as they present themselves as oscillations in the error. This is observed in practice and appears to be caused by at least the following two reasons: 
\begin{enumerate}[label=({P}\arabic*)]
\setlength\itemsep{0em}
\item Multiple finite element solutions can exist on the same mesh that represent the same solution  of the infinite-dimensional problem, e.g.~Figure \ref{fig:twostraightchannels}; \label{numproblem1}
\item A fine mesh can align worse than a coarser mesh with the jumps in the material distribution   that solves the infinite-dimensional problem.     \label{numproblem2}
\end{enumerate}
Observation \ref{numproblem1} is not surprising in the context of nonlinear PDEs and nonconvex variational problems. In such cases, an additional selection mechanism is required in order to favor one particular solution over others coexisting on the same mesh. Selection mechanisms are problem-dependent. In the case of nonlinear hyperbolic conservation laws the entropy condition plays this role. In the present context, one might propose choosing the solution, minimizing the modified optimization problem (\ref{BPh}), that attains the smallest objective functional value for $J$, within the basin of attraction of the isolated local minimizer.  For sufficiently small $h$, a minimizer satisfying this selection mechanism must exist. However, it is not necessarily unique and numerically enforcing such a condition can be difficult. In order to promote convergence to the minimizer of (\ref{BPh}) with the smallest value $J$, we interpolate the heavily-refined finite element solutions onto coarser meshes as initial guesses for the deflated barrier method. This strategy was effective in practice. The effects of the second observation \ref{numproblem2} are harder to test. However, in Section \ref{sec:sec:double-pipe}, we attempt to minimize mesh bias by measuring errors on unstructured meshes.\\

\textbf{Code availability:} For reproducibility, an implementation of the deflated barrier method as well as scripts to generate the convergence plots and solutions can be found at \url{https://bitbucket.org/papadopoulos/deflatedbarrier/}. The version of the software used in this paper is archived on Zenodo \cite{dbmcode2021}. 

\subsection{Discontinuous-forcing}
Consider the optimization problem (\ref{borrvallmin}), with a homogeneous Dirichlet boundary condition on $\vect{u}$, $\Omega = (0,1)^2$, volume fraction $\gamma = 1/3$, viscosity $\nu = 1$ and a forcing term  given by
\begin{align}
\vect{f}(x,y) = 
\begin{cases}
(10, 0)^\top & \text{if} \;\; 3/10 < x < 7/10 \;\; \text{and} \;\; 3/10 < y < 7/10,\\
(0, 0)^\top & \text{otherwise}.
\end{cases}
\end{align}
The inverse permeability, $\alpha$, is as given in (\ref{eq:alphachoice}), with $ \overline{\alpha} = 2.5 \times 10^4$ and $q = 1/10$, which satisfies \ref{alpha1}--\ref{alpha5}. Here $q$ is a penalty parameter which controls the level of intermediate values (between zero or one) in the optimal design. Figure \ref{fig:midpointsolns} depicts the material distribution of three minimizers. One local minimizer is in the shape of a figure eight and the two $\mathbb{Z}_2$ symmetric global minimizers are in the shape of annuli. 
\begin{figure}[ht]
\centering
\includegraphics[width = 0.32\textwidth]{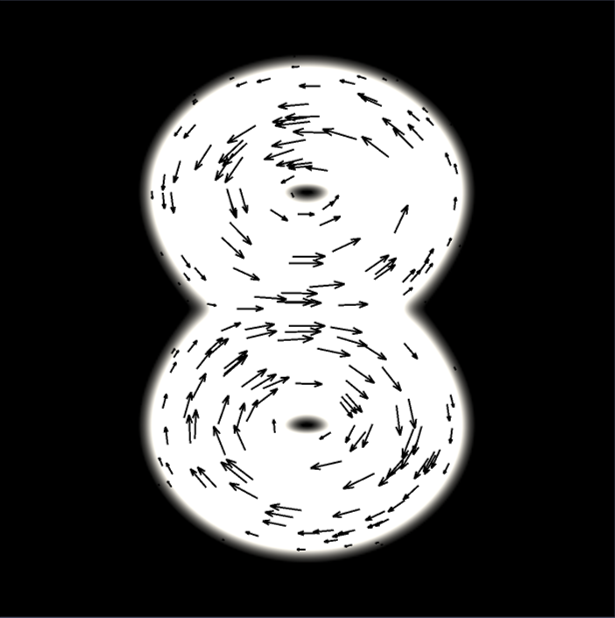}
\includegraphics[width = 0.32\textwidth]{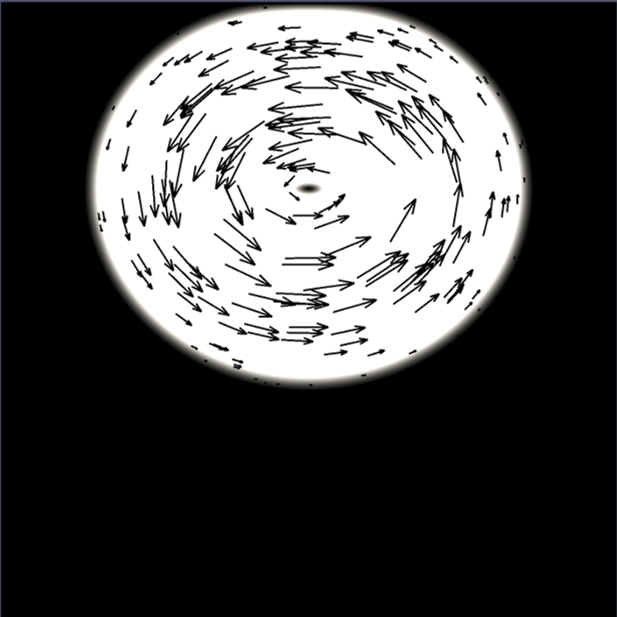}
\includegraphics[width = 0.32\textwidth]{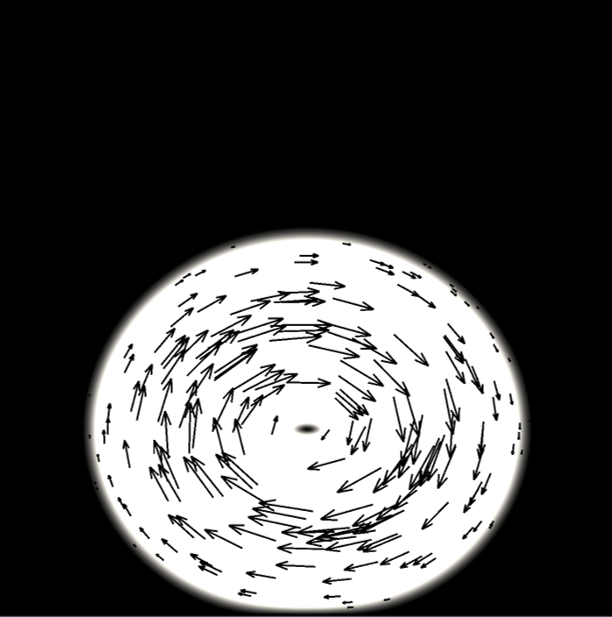}
\caption{The material distribution of a local (left) and the global (middle and right) minimizers of the discontinuous-forcing optimization problem. Black corresponds to a value of $\rho = 0$ and white corresponds to a value of $\rho = 1$, with the gray regions indicating intermediate values. The arrows indicate the velocity profile of the solutions.} 
\label{fig:midpointsolns}
\end{figure}
Since the domain is convex, $\vect{g} = \vect{0}$, and $\vect{f} \in L^2(\Omega)^d$, then, by the regularity results proven in the \ref{sec:ellipticregproof}, $\vect{u} \in H^2(\Omega)^2$ and $p \in H^1(\Omega)$. The conditions of Theorem \ref{th:regularity1} hold and, therefore, $\rho \in H^1(U_\theta)$ for every $\theta > 0$. In this particular example, the support of $\rho$ is compactly contained in the support of the velocity in all three solutions. Therefore, we conclude that $\rho \in H^1(\Omega)$.  

Consider a $(\mathcal{P}_2)^2\times\mathcal{P}_1$ Taylor--Hood finite element discretization for the velocity and the pressure, and a $\mathcal{P}_0$ piecewise constant finite element discretization for the material distribution. Since all three solutions are isolated local minimizers, by Theorem \ref{th:FEMexistence}, there exists a sequence of finite element solutions to the discretized first-order optimality conditions that strongly converges to the figure eight solution, and different sequences of different finite element solutions that strongly converge to the two annulus solutions. Their existence is confirmed in Figure \ref{fig:discconvergenceFenicsDG}. 

Since $\rho \in H^1(\Omega)$ and we are using a $\mathcal{P}_0$ finite element discretization, a na\"ive prediction for the convergence rate of the $L^2$-norm error of the material distribution is $\mathcal{O}(h)$. This rate is observed in the bottom left panel of Figure \ref{fig:discconvergenceFenicsDG}. Moreover, since the minimum regularity of the velocity is $\vect{u} \in H^2(\Omega)^d$, and we are using a $(\mathcal{P}_2)^2$ finite element discretization, a prediction  for the expected convergence rates of the velocity are $\mathcal{O}(h)$ and $\mathcal{O}(h^2)$ for the $H^1$-norm and $L^2$-norm errors of the velocity, respectively. 

 In the standard Stokes system, the regularity of $\vect{u}$ is related to the regularity of the forcing term $\vect{f} \in H^s(\Omega)^d$, such that $\vect{u} \in H^{s+2}(\Omega)^d$ (assuming the domain and boundary data are also suitably regular). Here, the regularity of the forcing term satisfies $s<1/2$. If we assume that the velocity has the additional regularity $\vect{u} \in H^{s+2}(\Omega)^d$, $s \in (0,1/2)$, in this context, a prediction for the upper limit of the convergence rate is $\mathcal{O}(h^r)$ and $\mathcal{O}(h^{t+1})$, for some $r, t \in [1,s+1]$, for the $H^1$-norm and $L^2$-norm errors of the velocity, respectively.  The rates observed in the top panels of Figure \ref{fig:discconvergenceFenicsDG} match this prediction. The $H^1$-norm error is decreasing at a rate slightly faster than $\mathcal{O}(h^{3/2})$ for all three solutions and the $L^2$-norm error convergence rate is $\mathcal{O}(h^2)$ for the figure eight solution and $\mathcal{O}(h^{5/2})$ for the annuli solutions.   We hypothesize that the upper limit of the convergence rate of the $L^2$-norm error of the velocity is bounded by the relatively slower rate of the convergence of the material distribution. 

Finally, since the minimum regularity of the pressure is $p \in H^1(\Omega)$ and the discretization is $\mathcal{P}_1$, a prediction for the convergence rate of the $L^2$-norm error is $\mathcal{O}(h^r)$,  for some $r \in [1,s+1]$.  Initially, the convergence rate is $\mathcal{O}(h^{3/2})$ which matches our na\"ive prediction. However, on finer meshes, the convergence rate increases. We hypothesize that this speedup is artificial and is caused by the lack of resolution of the refined finite element solutions that are being used as proxies for the solutions  of the infinite-dimensional problem  in the error norm estimate. Qualitatively, it can be checked that mesh refinement in areas where the discretized material distribution lies between 1/10 and 9/10 is an ineffective strategy for improving the approximation of the pressure  that solves the infinite-dimensional problem   over the whole domain.
\begin{figure}[ht]
\centering
\includegraphics[width = 0.49\textwidth]{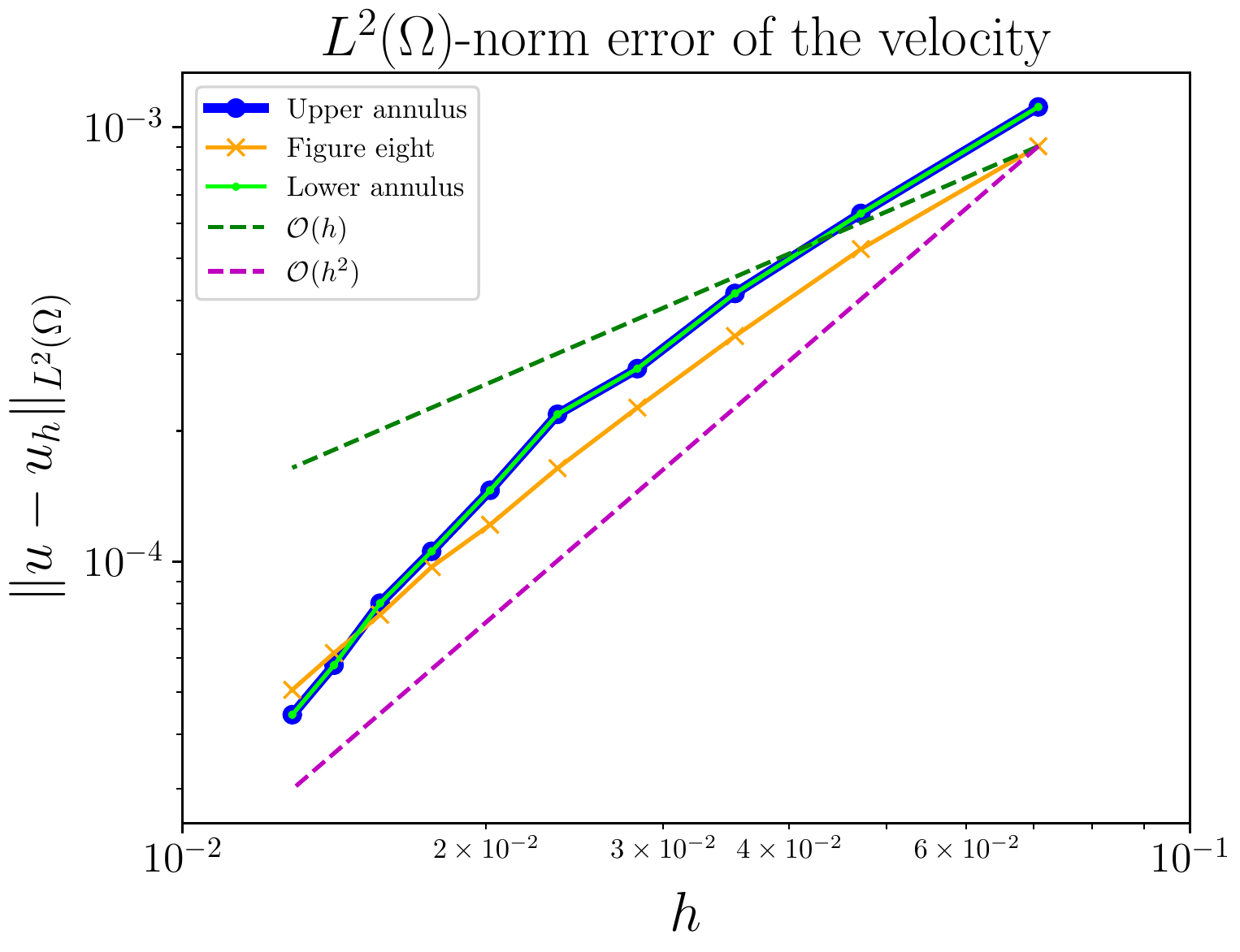}
\includegraphics[width = 0.49\textwidth]{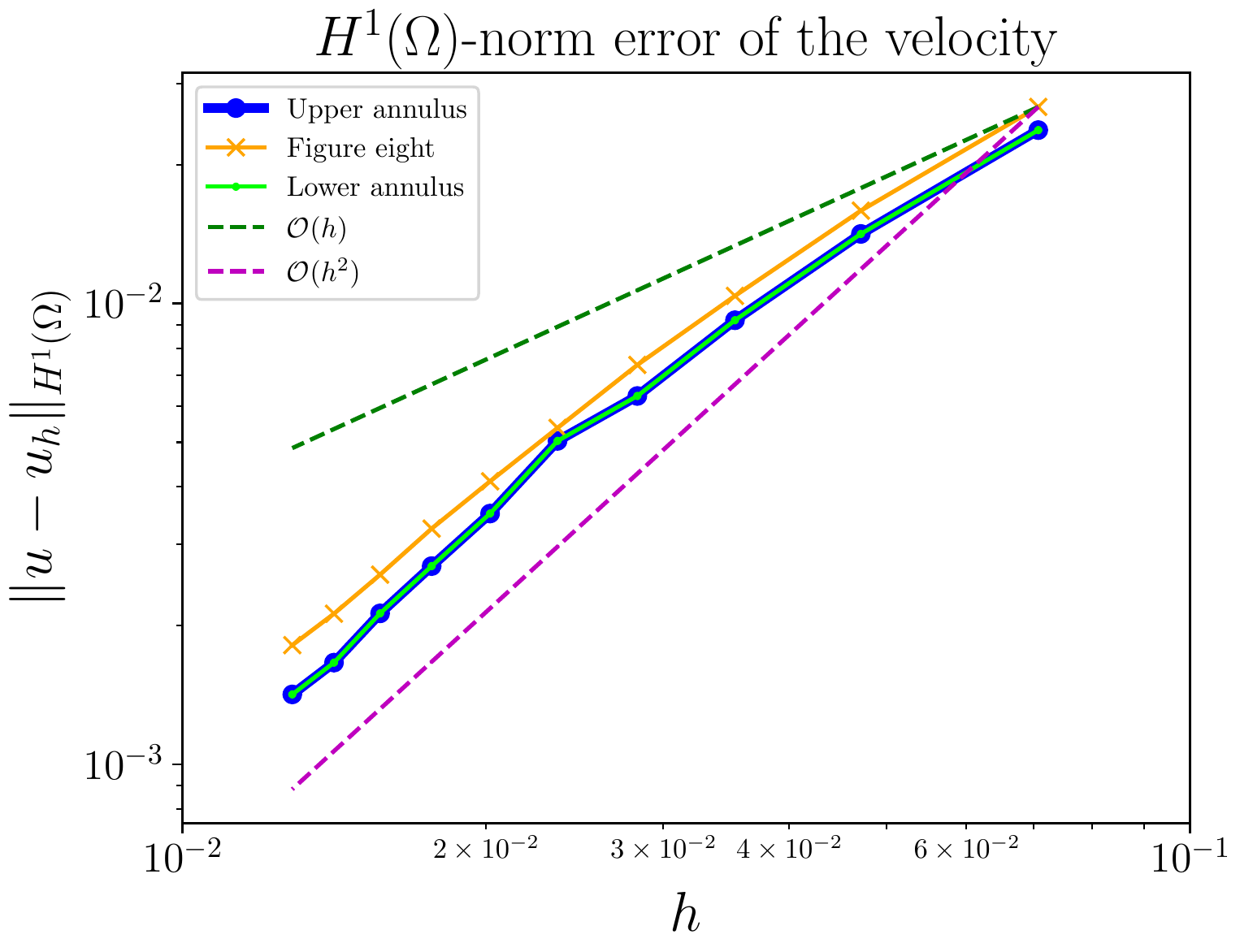}
\includegraphics[width = 0.49\textwidth]{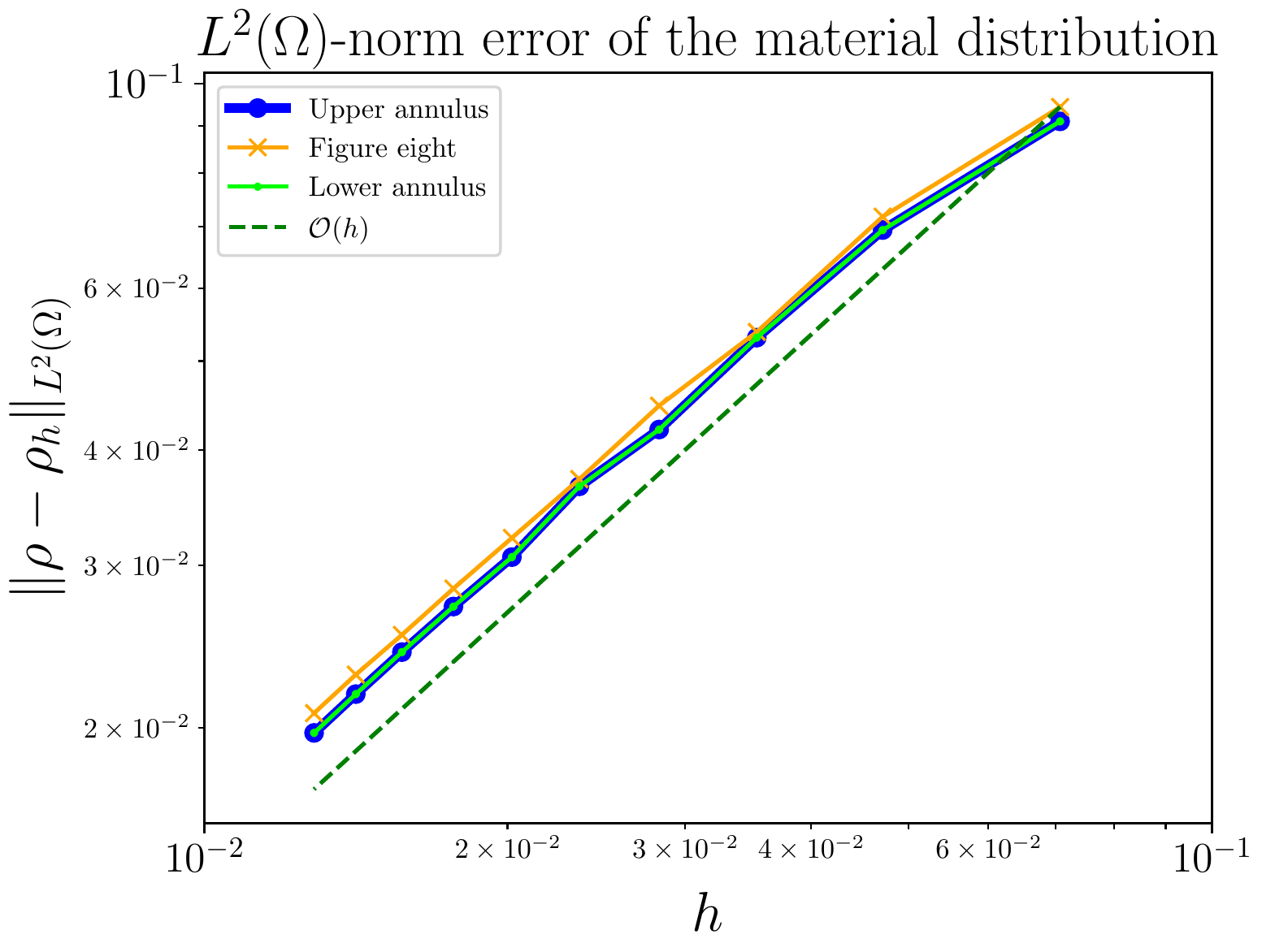}
\includegraphics[width = 0.49\textwidth]{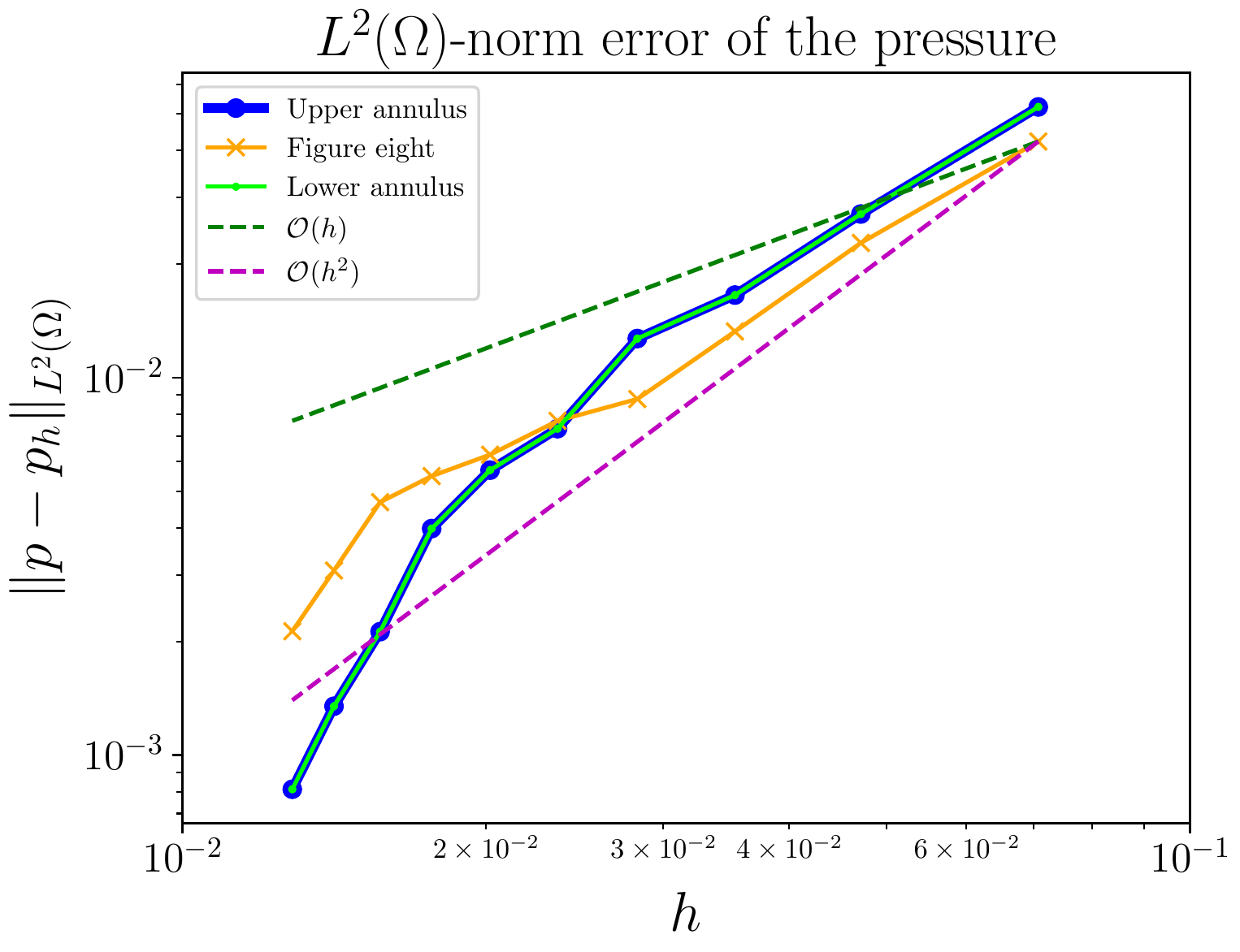}
\caption{The convergence of  $\vect{u}_h$, $\rho_h$, and $p_h$ in the discontinuous-forcing problem for the figure eight and annulus solutions on structured meshes, with a $\mathcal{P}_0 \times (\mathcal{P}_2)^2 \times \mathcal{P}_1$ discretization for $(\rho_h, \vect{u}_h, p_h)$.} 
\label{fig:discconvergenceFenicsDG}
\end{figure}

\subsection{Double-pipe} 
\label{sec:sec:double-pipe}
Consider the optimization problem (\ref{borrvallmin}) \cite[Sec.~4.5]{Borrvall2003}, with two prescribed flow inputs and two prescribed outputs, where $\Omega = (0,3/2) \times (0,1)$, $\gamma = 1/3$, $\vect{f}=(0,0)^\top$ and $\nu = 1$  and the boundary conditions on $\vect{u}$ are given by the boundary data
\begin{align}
\label{eq:doublepipebcs}
 \vect{g}(x,y) = 
\begin{cases}
\left(1-144(y-3/4)^2, 0\right)^\top & \text{if} \;\; 2/3 \leq y \leq 5/6, x = 0 \; \text{or} \; 3/2,\;\; \\
\left(1-144(y-1/4)^2, 0\right)^\top & \text{if} \;\; 1/6 \leq y \leq 1/3, x = 0 \; \text{or} \; 3/2,\;\; \\
(0,0)^\top & \text{elsewhere on} \;\partial \Omega.
\end{cases}
\end{align}
The function $\alpha$ is as given in (\ref{eq:alphachoice}), with $ \overline{\alpha} = 2.5 \times 10^4$ and $q = 1/10$. Figure \ref{fig:doublepipe} visualizes the setup of the problem and depicts the material distribution of the two minimizers. One local minimizer is a straight channel solution and the global minimizer is in the form of a double-ended wrench. 
\begin{figure}[ht]
\centering
\includegraphics[width = 0.4\textwidth]{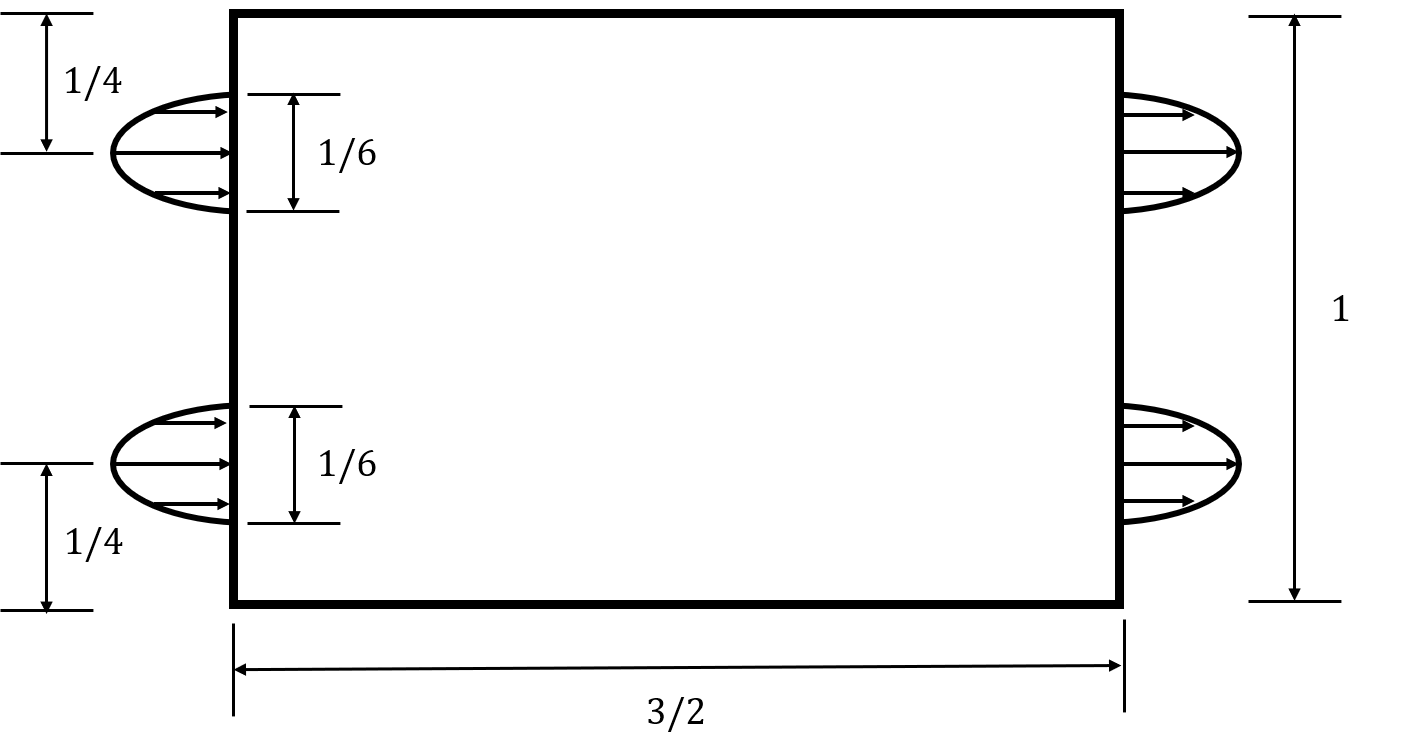}
\includegraphics[width = 0.29\textwidth]{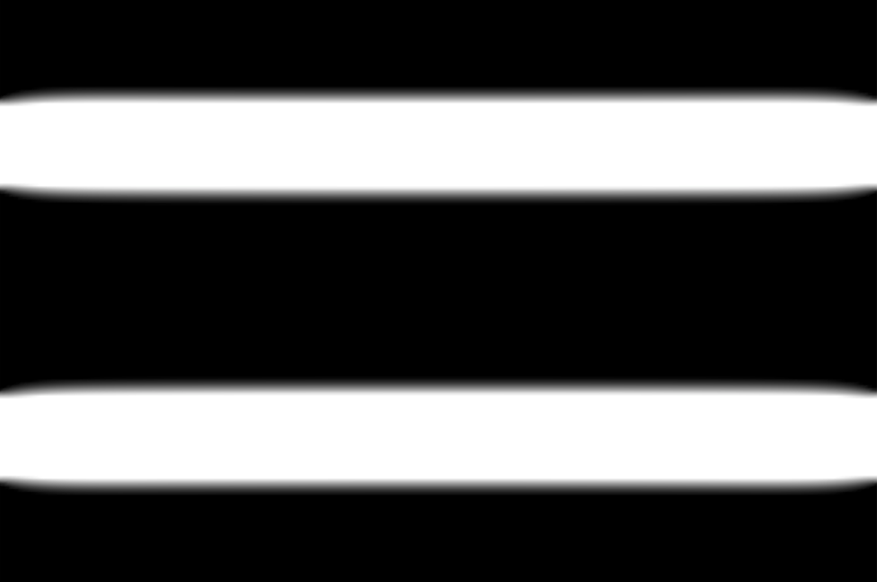}
\includegraphics[width = 0.29\textwidth]{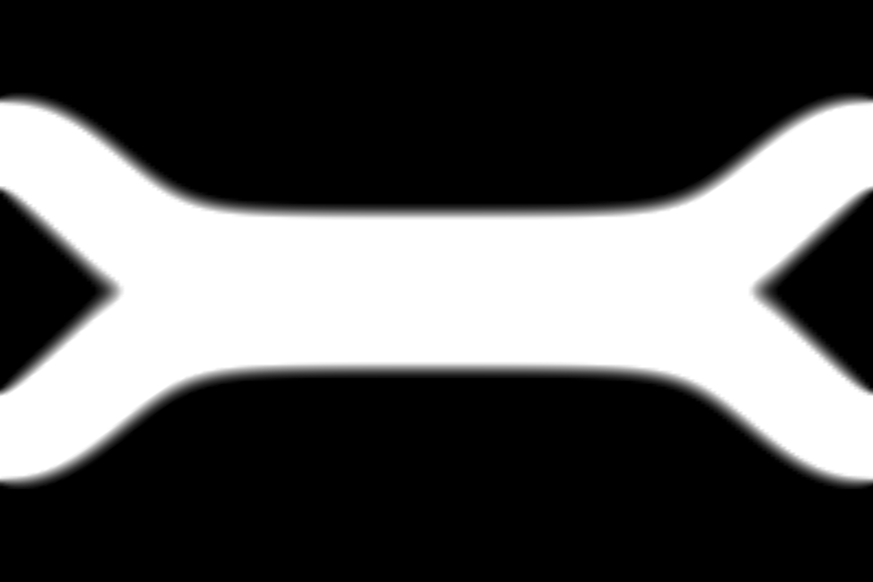}
\caption{The setup of the double-pipe problem is shown on the left. The middle and right figures depict the material distribution of the straight channel and double-ended wrench solutions, respectively. Black corresponds to a value of $\rho = 0$ and white corresponds to a value of $\rho = 1$, with the gray regions indicating intermediate values.} 
\label{fig:doublepipe}
\end{figure}

We employ a $(\mathcal{P}_2)^2 \times \mathcal{P}_1$ Taylor--Hood finite element discretization for the velocity and the pressure, and a $\mathcal{P}_1$ continuous piecewise linear finite element discretization for the material distribution. This example satisfies all the conditions of Theorem \ref{th:FEMexistence} and, for both minimizers, we numerically verify that there exists a sequence of finite element solutions that strongly converges to it in Figure \ref{fig:doublepipeconvergenceFenicsUnstructured}. 
\begin{figure}[ht!]
\centering
\includegraphics[width = 0.49\textwidth]{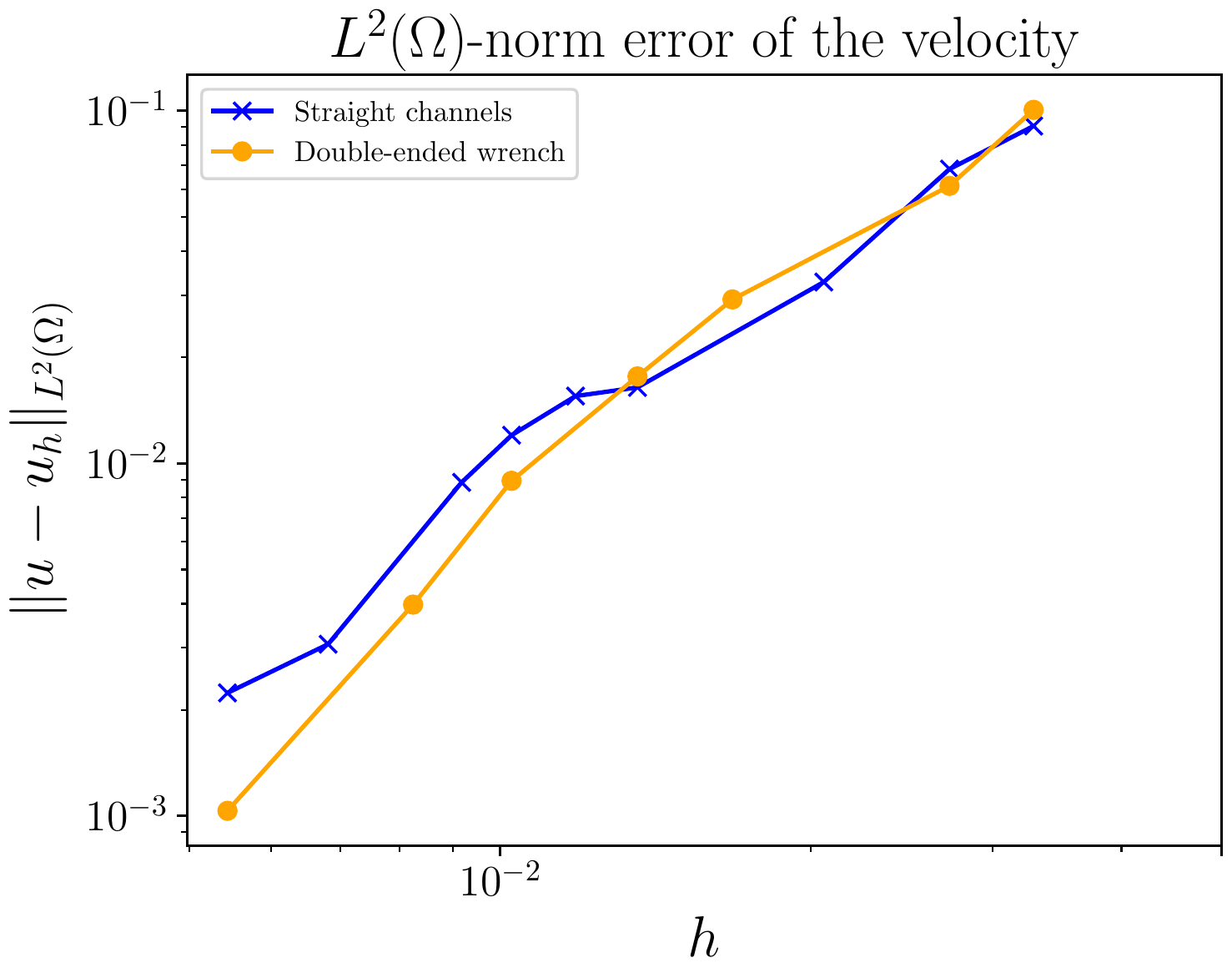}
\includegraphics[width = 0.49\textwidth]{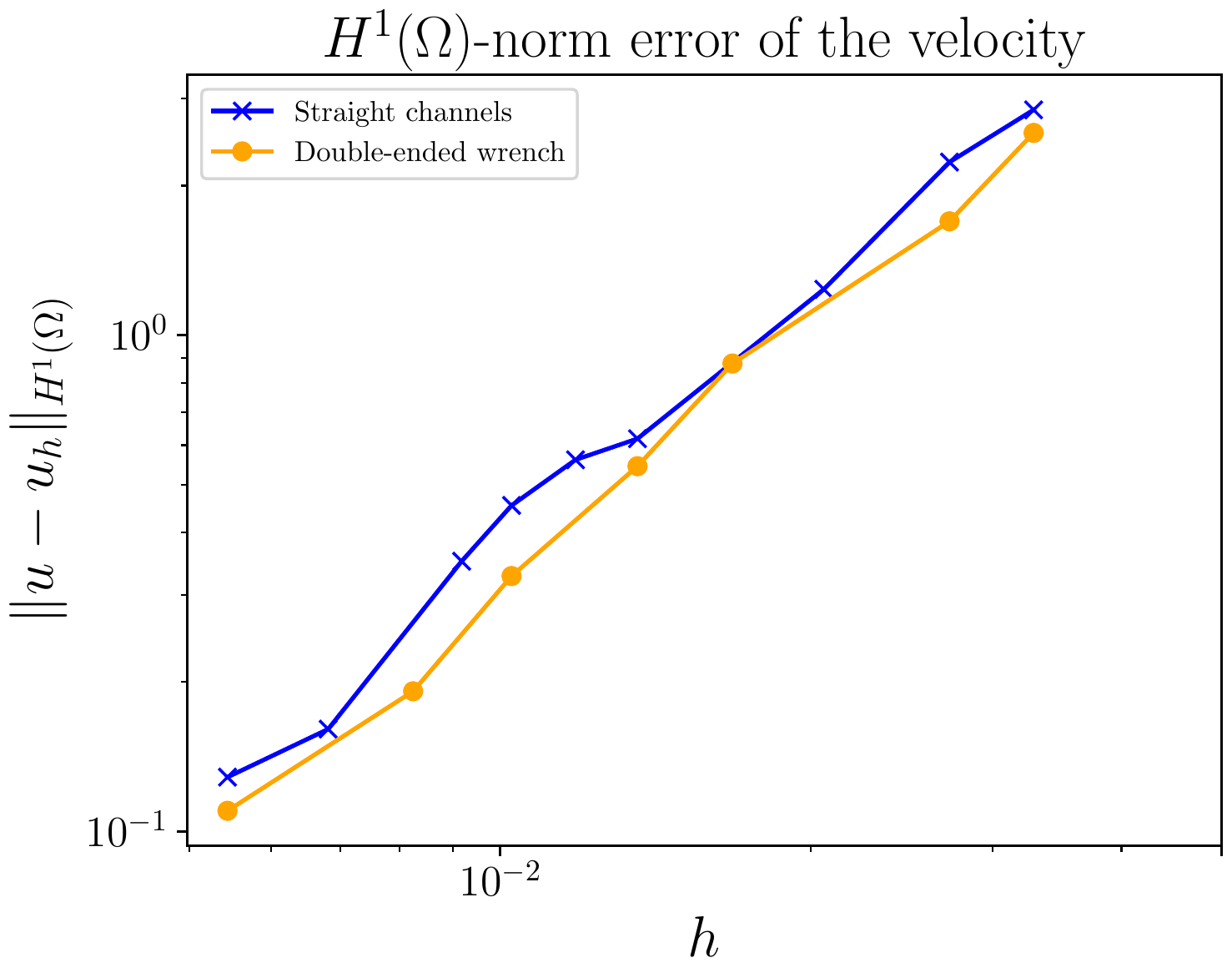}
\includegraphics[width = 0.49\textwidth]{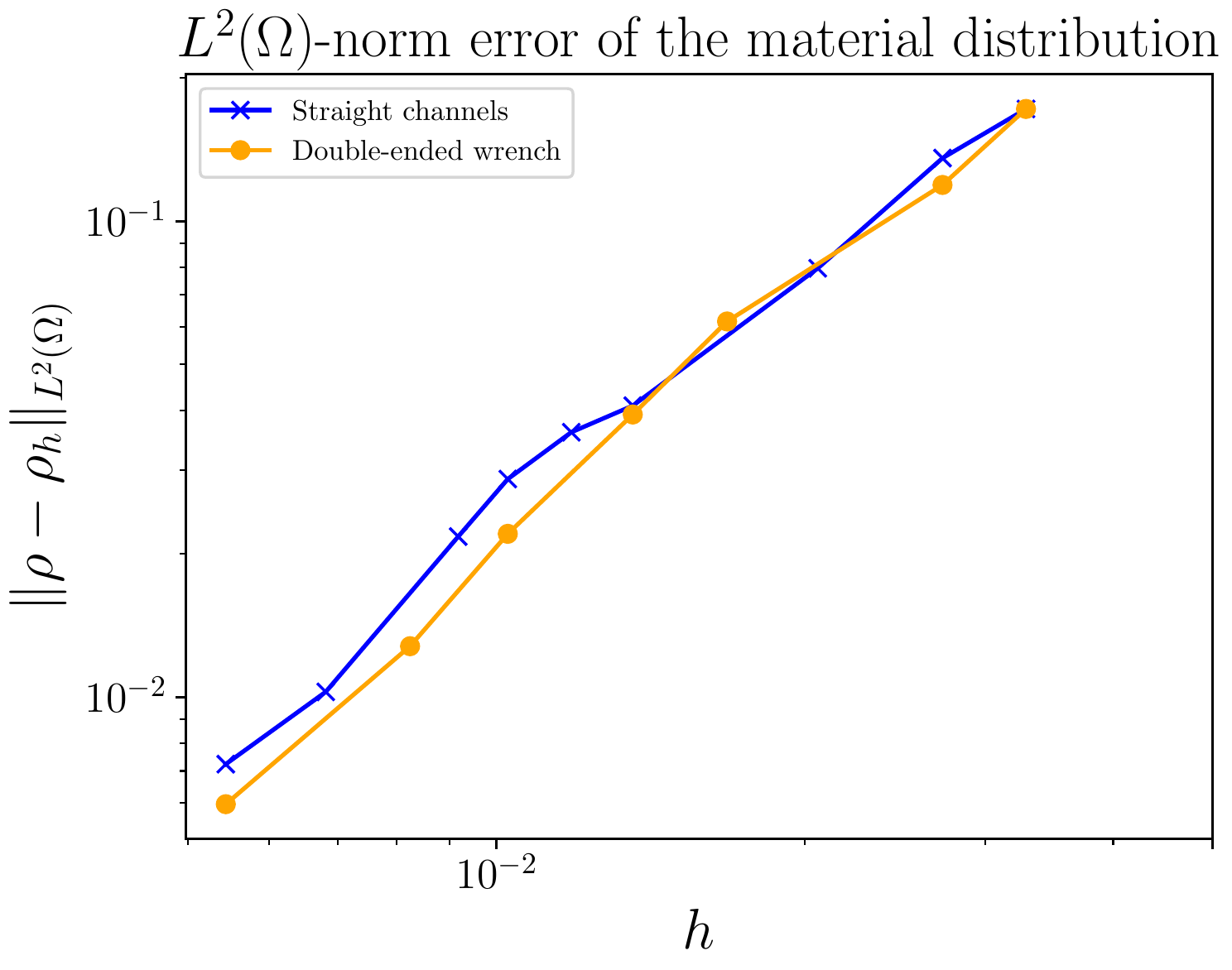}
\includegraphics[width = 0.49\textwidth]{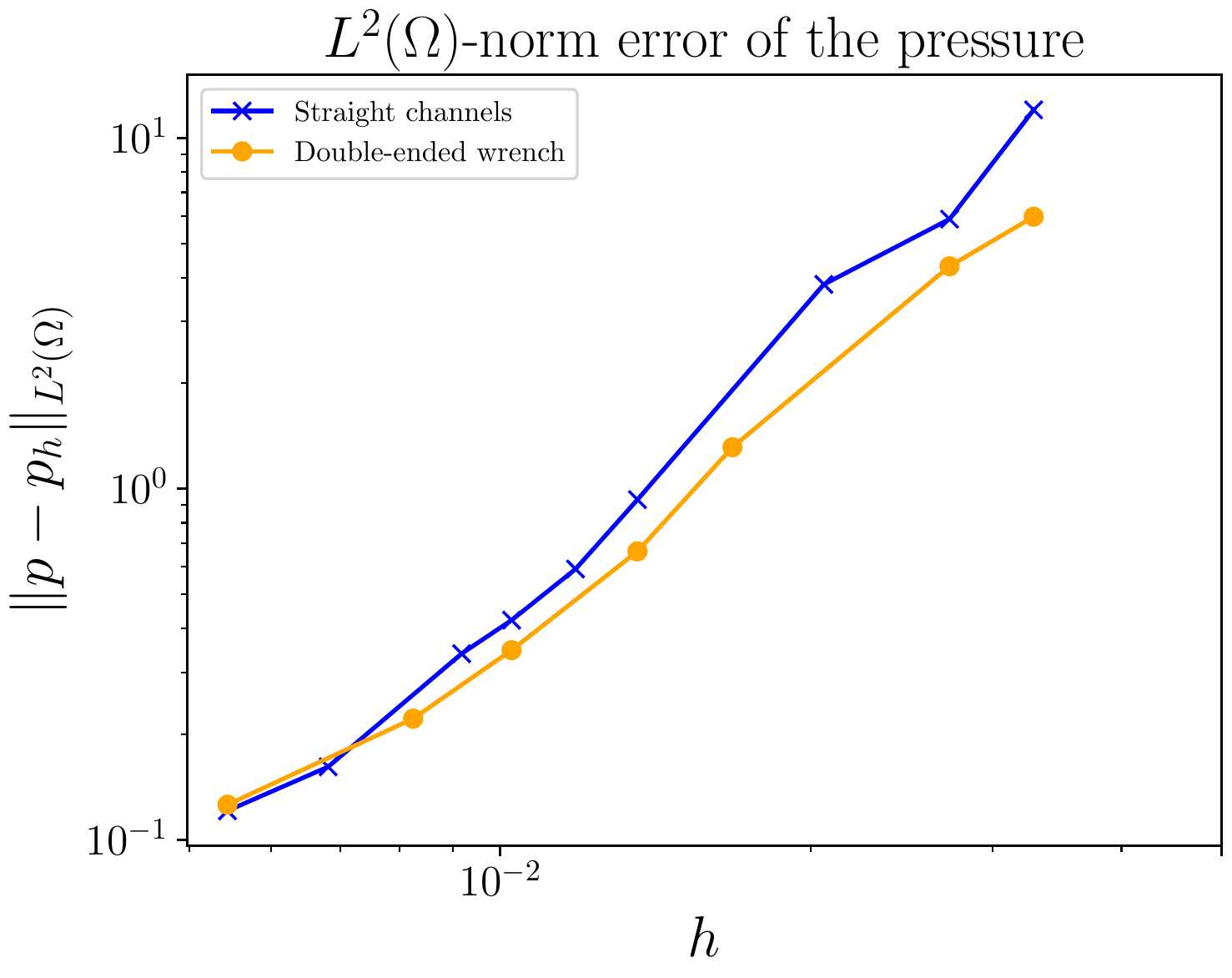}
\caption{The convergence of $\vect{u}_h$, $\rho_h$, and $p_h$ for the double-pipe problem for both the straight channel and double-ended wrench solutions on an unstructured mesh with a $\mathcal{P}_1 \times (\mathcal{P}_2)^2 \times \mathcal{P}_1$ discretization for $(\rho_h, \vect{u}_h, p_h)$.} 
\label{fig:doublepipeconvergenceFenicsUnstructured}
\end{figure}

As mentioned earlier in this section, it may be possible to find a sequence of mesh sizes, $(h_i)$, such that there exist two different sequences of finite element solutions that strongly converge to the same isolated minimizer. In Figure \ref{fig:twostraightchannels}, we depict two different straight channel finite element solutions that exist on the same unstructured mesh where $h=0.04$. Both solutions satisfy the discretized first-order optimality conditions (\ref{FOC1h})--(\ref{FOC3h}) and both locally minimize $J(\vect{v}_h, \eta_h)$. Choosing one over the other would change the convergence pattern of the strongly converging sequence. This may cause difficulty in practice, as optimization strategies are unlikely to discover the discretized global minimum without additional selection mechanisms. 
\begin{figure}[ht!]
\centering
\includegraphics[height = 0.3\textwidth]{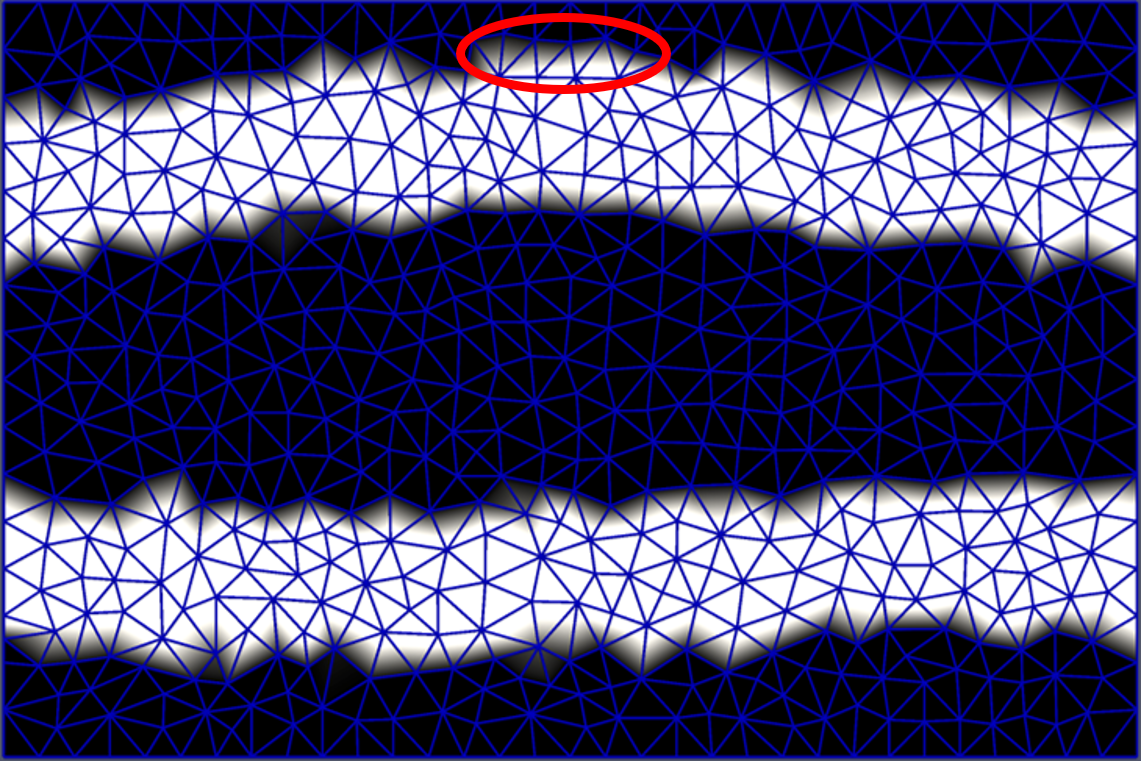}
\includegraphics[height = 0.3\textwidth]{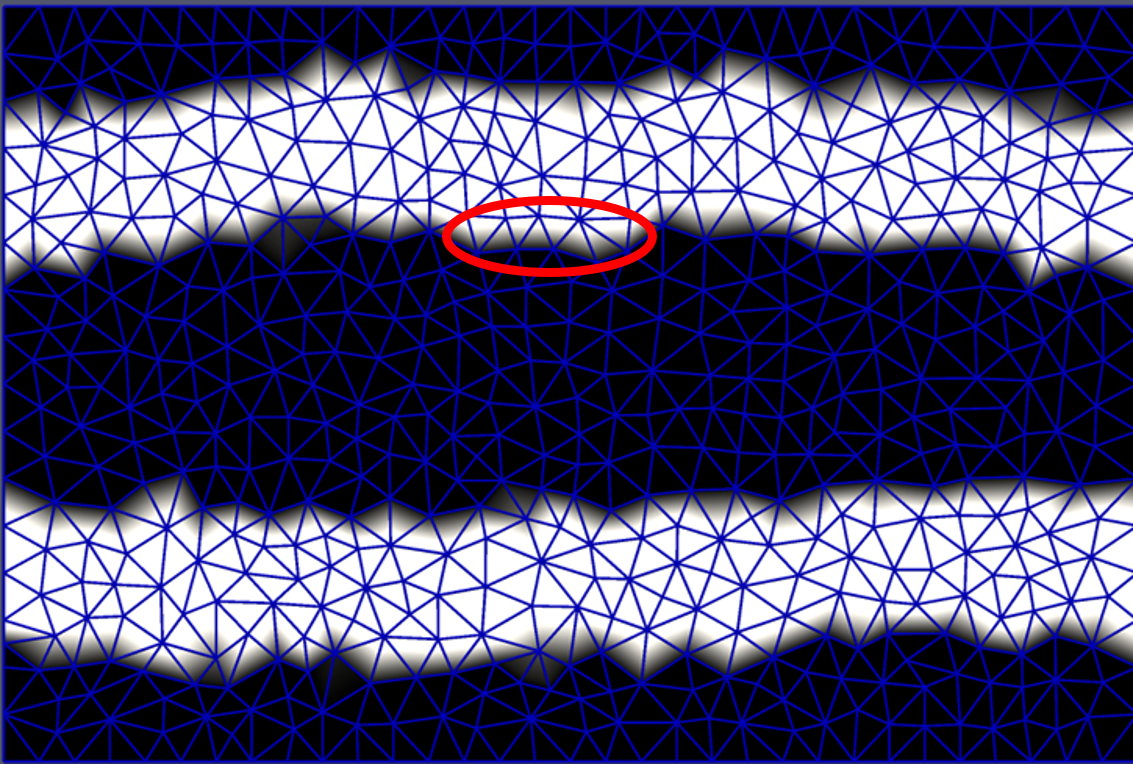}
\caption{Two different straight channel finite element solutions of the double-pipe optimization problem that exist on the same unstructured mesh where $h=0.04$. The differences can be spotted at the midway point of the top channel.} 
\label{fig:twostraightchannels}
\end{figure}

\section{Conclusions}
In this work we have studied the fluid topology optimization model of Borrvall and Petersson \cite{Borrvall2003}. In the case of a homogeneous Dirichlet boundary condition and under a mild convexity assumption on the inverse permeability term, $\alpha$, we have shown that the material distribution necessarily lives in the Sobolev space $H^1$ inside any compact subset of the support of the velocity. Moreover, we have formally treated the nonconvexity of the optimization problem (including the case of inhomogeneous Dirichlet boundary conditions) and have shown that, given an isolated minimizer  of the infinite-dimensional problem,  there exists a sequence of discretized solutions, satisfying the associated first-order optimality conditions, that \emph{strongly} converges to the minimizer in the appropriate spaces.~We have numerically verified that these sequences exist and have discussed the observed convergence rates. 

\begin{appendix}
\gdef\thesection{\appendixname\!\!}
\section[Elliptic regularity]{Elliptic regularity of the generalized Stokes system with Dirichlet boundary data}
\label{sec:ellipticregproof}
\begin{lemma}
Let the domain $\Omega$ be either a convex polygon in two dimensions or a convex polyhedron in three dimensions and consider the triple $(\vect{u}, \rho, p) \in H^1_{\vect{g}}(\Omega)^d \times C_\gamma \times L^2_0(\Omega)$ that satisfies (\ref{FOC1})--(\ref{FOC3}). Suppose that the forcing term $\vect{f} \in L^2(\Omega)^d$ and the boundary datum $\vect{g}$ is the boundary trace of a function $\hat{\vect{g}} \in H^2(\Omega)^d$ on the boundary $\partial \Omega$  and satisfies $\int_{\partial \Omega} \vect{g} \cdot \vect{n} \, \mathrm{d}s = 0$.  Then $\vect{u} \in H^2(\Omega)^d$ and $p \in H^1(\Omega)$. 
\end{lemma}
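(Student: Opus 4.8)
The plan is to observe that the system (\ref{FOC1})--(\ref{FOC2}) is, in strong form, a classical Stokes system perturbed only by the zeroth-order reaction term $\alpha(\rho)\vect{u}$, and that this term is of sufficiently low order to be absorbed into the data. Concretely, since $(\vect{u},\rho) \in H^1_{\vect{g}}(\Omega)^d \times C_\gamma$, the coefficient satisfies $\alpha(\rho) \in L^\infty(\Omega;[\underline{\alpha},\overline{\alpha}])$ by \ref{alpha1}, while $\vect{u} \in H^1(\Omega)^d \hookrightarrow L^2(\Omega)^d$, so the product $\alpha(\rho)\vect{u}$ already lies in $L^2(\Omega)^d$. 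Defining the modified forcing $\tilde{\vect{f}} := \vect{f} - \alpha(\rho)\vect{u} \in L^2(\Omega)^d$, the pair $(\vect{u},p)$ is then a weak solution of the \emph{classical} Stokes system
\begin{align*}
-\nu \Delta \vect{u} + \nabla p = \tilde{\vect{f}}, \quad \mathrm{div}\,\vect{u} = 0 \;\; \text{in}\; \Omega, \quad \vect{u} = \vect{g} \;\; \text{on}\; \partial\Omega,
\end{align*}
in the sense that the pure-diffusion bilinear form plus $b(\vect{v},p)$ equals $l_{\tilde{\vect{f}}}(\vect{v})$ for all test functions. The crucial point is that no bootstrap is required: a single gain of regularity, from $\tilde{\vect{f}} \in L^2$ to $\vect{u} \in H^2$, suffices.

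Next I would reduce to homogeneous boundary data. Starting from $\hat{\vect{g}} \in H^2(\Omega)^d$ whose trace is $\vect{g}$, the function $g_0 := -\mathrm{div}\,\hat{\vect{g}}$ lies in $H^1(\Omega)$ and has zero mean, since $\int_\Omega g_0 \,\dx = -\int_{\partial\Omega}\vect{g}\cdot\vect{n}\,\mathrm{d}s = 0$ by the stated compatibility condition. Using the divergence (Bogovskii) solution operator on the convex---hence star-shaped---domain $\Omega$, I would obtain $\vect{z} \in H^2(\Omega)^d \cap H^1_0(\Omega)^d$ with $\mathrm{div}\,\vect{z} = g_0$, and set $\vect{G} := \hat{\vect{g}} + \vect{z} \in H^2(\Omega)^d$, which satisfies $\vect{G}|_{\partial\Omega} = \vect{g}$ and $\mathrm{div}\,\vect{G} = 0$. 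Writing $\vect{w} := \vect{u} - \vect{G} \in H^1_0(\Omega)^d$, the pair $(\vect{w},p)$ solves the homogeneous-Dirichlet, divergence-free Stokes system with right-hand side $\vect{F} := \tilde{\vect{f}} + \nu\Delta\vect{G} \in L^2(\Omega)^d$, where $\nu\Delta\vect{G} \in L^2$ precisely because $\vect{G} \in H^2$.

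Finally I would invoke the classical $H^2 \times H^1$ regularity theory for the homogeneous Stokes problem with $L^2$ forcing on convex polygonal (in two dimensions) or convex polyhedral (in three dimensions) domains, which yields $\vect{w} \in H^2(\Omega)^d$ and $p \in H^1(\Omega)$; undoing the lift then gives $\vect{u} = \vect{w} + \vect{G} \in H^2(\Omega)^d$. I expect the main obstacle to be the invocation of this convex-domain regularity result rather than anything in the reduction itself: on a general Lipschitz polytope the solution can develop corner and edge singularities that place it strictly below $H^2$, and it is the convexity hypothesis that excludes them. I also note that the reaction coefficient $\alpha(\rho)$ is merely $L^\infty$, which would preclude a direct appeal to smooth-coefficient elliptic theory; the entire strategy hinges on the fact that this low-regularity term never enters the differential operator but is carried wholly on the right-hand side.
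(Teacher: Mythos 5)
Your overall strategy is the same as the paper's: move the zeroth-order term $\alpha(\rho)\vect{u} \in L^2(\Omega)^d$ to the right-hand side, lift the boundary data by $\vect{\hat{g}} \in H^2(\Omega)^d$, and invoke the $H^2 \times H^1$ regularity theory for the Stokes system on convex polygons/polyhedra (Kellogg--Osborn in two dimensions; Kozlov et al.\ and Maz'ya--Shaposhnikova in three). The compatibility computation $\int_\Omega \mathrm{div}\,\vect{\hat{g}}\,\dx = \int_{\partial\Omega}\vect{g}\cdot\vect{n}\,\mathrm{d}s = 0$ and the observation that $\nu\Delta\vect{G} \in L^2$ are fine. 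However, there is a genuine gap in the one step where you deviate from the paper: the Bogovskii correction. The standard Bogovskii theory (e.g.\ Galdi, Thm.~III.3.3, or Costabel--McIntosh) produces $\vect{z} \in H^{2}(\Omega)^d \cap H^1_0(\Omega)^d$ with $\mathrm{div}\,\vect{z} = g_0$ only when the datum lies in $H^1_0(\Omega) \cap L^2_0(\Omega)$, i.e.\ has \emph{vanishing trace}; for $g_0 \in L^2_0(\Omega)$ one only gets $\vect{z} \in H^1_0(\Omega)^d$. Your datum $g_0 = -\mathrm{div}\,\vect{\hat{g}}$ lies in $H^1(\Omega) \cap L^2_0(\Omega)$ but has no reason to vanish on $\partial\Omega$, so the claimed $\vect{z} \in H^2(\Omega)^d \cap H^1_0(\Omega)^d$ is not supplied by the operator you cite. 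Constructions that repair this on smooth domains (e.g.\ solving a Neumann problem in $H^3$ and correcting by a divergence-free trace lifting) rely on elliptic regularity beyond $H^2$, which is exactly what fails on polygonal/polyhedral domains, so the gap is not cosmetic in this setting.

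The fix is to not insist on a divergence-free lift at all. Set $\vect{w} := \vect{u} - \vect{\hat{g}} \in H^1_0(\Omega)^d$ and carry the inhomogeneous divergence constraint $\mathrm{div}\,\vect{w} = -\mathrm{div}\,\vect{\hat{g}} =: \phi \in H^1(\Omega)$ with $\int_\Omega \phi\,\dx = 0$ into the regularity theorem: this is precisely what the paper does, and the Kellogg--Osborn result (and its three-dimensional counterparts) is stated for the Stokes problem with forcing in $L^2(\Omega)^d$ \emph{and} divergence data in $H^1(\Omega) \cap L^2_0(\Omega)$, yielding $\vect{w} \in H^2(\Omega)^d$ and $p \in H^1(\Omega)$ directly. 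With that substitution the rest of your argument goes through unchanged.
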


\begin{proof}
The idea of the proof is to reduce the system (\ref{FOC1})--(\ref{FOC2}) to the standard Stokes system with a homogeneous Dirichlet boundary condition and invoke the regularity results of Kellogg and Osborn \cite{Kellogg1976} and in three dimensions the results found in Kozlov et al.~\cite{Kozlov1994} and  Maz'ya and Shaposhnikova \cite{Mazya2011}.  

Let $\vect{w} := \vect{u} - \vect{\hat{g}}$. Since the trace operator is a linear operator, we see that $\vect{w}|_{\partial \Omega} = (\vect{u} - \vect{\hat{g}})|_{\partial \Omega} = \vect{g} - \vect{g}= \vect{0}$. Since $\vect{u} \in H^1(\Omega)^d$ and $\vect{\hat{g}} \in H^2(\Omega)^d$, then $\vect{w} \in H^1_0(\Omega)^d$. 

By substituting $\vect{w}$ into  (\ref{FOC1})--(\ref{FOC2}), we see that (\ref{FOC1})--(\ref{FOC2}) is equivalent to finding $(\vect{w}, p) \in H^1_{0}(\Omega)^d  \times L^2_0(\Omega)$ that satisfies for all $(\vect{v}, q) \in H^1_{0}(\Omega)^d  \times L^2_0(\Omega)$:
\begin{align}
\int_\Omega  \nabla \vect{w} : \nabla \vect{v} - p \; \mathrm{div}(\vect{v}) \; \mathrm{d}x &= \int_\Omega (\vect{f} - \alpha(\rho) (\vect{w} + \vect{\hat{g}}) ) \cdot \vect{v} - \nabla \vect{\hat{g}} : \nabla \vect{v} \; \mathrm{d}x, \label{gstokes1}\\
\int_\Omega  q \; \mathrm{div}(\vect{w} + \vect{\hat{g}}) \; \mathrm{d}x &= 0. \label{gstokes2}
\end{align}
Define $\vect{\hat{f}}$ as $\vect{\hat{f}}:=\vect{f} - \alpha(\rho) (\vect{w} + \vect{\hat{g}}) + \Delta \vect{\hat{g}}$. Since $\vect{f} \in L^2(\Omega)^d$, $\alpha(\rho) \in L^\infty(\Omega)$, $\vect{w} \in H^1_0(\Omega)^d$, and $\vect{\hat{g}} \in H^2(\Omega)^d$, then $\vect{\hat{f}} \in L^2(\Omega)^d$. By an application of integration by parts on the final term on the right-hand side of (\ref{gstokes1}), we see that (\ref{gstokes1})--(\ref{gstokes2}) is equivalent to finding $(\vect{w}, p) \in H^1_{0}(\Omega)^d  \times L^2_0(\Omega)$ that satisfies for all $(\vect{v}, q) \in H^1_{0}(\Omega)^d  \times L^2_0(\Omega)$:
\begin{align}
\int_\Omega  \nabla \vect{w} : \nabla \vect{v} - p \; \mathrm{div}(\vect{v}) \; \mathrm{d}x &= \int_\Omega \vect{\hat{f}} \cdot \vect{v} \; \mathrm{d}x, \label{gstokes3}\\
\int_\Omega  q \; \mathrm{div}(\vect{w}) \; \mathrm{d}x &=  \int_\Omega q \phi \, \mathrm{d}x,  \label{gstokes4}
\end{align}

where $\phi = -\mathrm{div}(\vect{\hat{g}})$ a.e.~and the divergence theorem implies that
\begin{align}
\int_\Omega \phi \, \mathrm{d}x = - \int_{\partial \Omega} \vect{g} \cdot \vect{n} \, \mathrm{d}s = 0.
\end{align}

We note that (\ref{gstokes3})--(\ref{gstokes4}) is the standard Stokes system with a homogeneous Dirichlet boundary condition and forcing term $\vect{\hat{f}} \in L^2(\Omega)^d$. Therefore, by the elliptic regularity of the Stokes system \cite{Kellogg1976, Kozlov1994} and \cite[Th.~13]{Mazya2011}, $\vect{w} \in H^2(\Omega)^d$ and $p \in H^1(\Omega)$. Since $\vect{u} = \vect{w} + \vect{\hat{g}}$ and $\vect{\hat{g}} \in H^2(\Omega)^d$, we conclude that $\vect{u} \in H^2(\Omega)^d$. 
\end{proof}
\end{appendix}

\bibliography{references}

\end{document}